\numberwithin{equation}{section}
\definecolor{my_color}{rgb}{0,0.5,0.5}
\definecolor{mixt}{rgb}{0.5,0.3,0.2}
\definecolor{darkgreen}{rgb}{0.09, 0.35, 0.27}
\definecolor{darkblue}{rgb}{0,0.1,0.8}
\definecolor{forest}{rgb}{0.13, 0.55, 0.13}
\renewcommand{\@cite}[2]{[{{\bf #1}\if@tempswa , #2\fi}]}
\renewcommand{\@biblabel}[1]{[{\bf #1}]\hfill}
\newtheorem{thm}{Theorem}[section]
\newtheorem{utv}[thm]{Condition}
\newtheorem{lm}[thm]{Lemma}%[chapter]
\newtheorem{cl}[thm]{Corollary}%[chapter]
\newtheorem{prop}[thm]{Proposition}%[chapter]
\newtheorem{conj}[thm]{Conjecture}
\theoremstyle{remark}
\newtheorem{rmk}[thm]{Remark}
\theoremstyle{definition}
\newtheorem{ex}[thm]{Example}
\newtheorem{df}{Definition}
\newcommand {\ah}{{\mathfrak a}}
\newcommand {\be}{{\mathfrak b}}
\newcommand {\ce}{{\mathfrak k}}
\newcommand {\ff}{{\mathfrak f}}
\newcommand {\g}{{\mathfrak g}}
\newcommand {\h}{{\mathfrak h}}
\newcommand {\ka}{{\mathfrak c}}
\newcommand {\el}{{\mathfrak l}}
\newcommand {\tel}{\tilde{\mathfrak l}}
\newcommand {\n}{{\mathfrak n}}
\newcommand {\p}{{\mathfrak p}}
\newcommand {\q}{{\mathfrak q}}
\newcommand {\rr}{{\mathfrak r}}
\newcommand {\es}{{\mathfrak s}}
\newcommand {\te}{{\mathfrak t}}
\newcommand {\ut}{{\mathfrak u}}
\newcommand {\z}{{\mathfrak z}}
\newcommand {\slno}{{\mathfrak {sl}}_{n+1}}
\newcommand {\sltn}{{\mathfrak {sl}}_{2n}}
\newcommand {\glv}{{\mathfrak {gl}}(\BV)}
\newcommand {\spn}{{\mathfrak {sp}}_{2n}}
\newcommand {\sono}{{\mathfrak {so}}_{2n+1}}
\newcommand {\sone}{{\mathfrak {so}}_{2n}}
\newcommand {\son}{{\mathfrak {so}}_{n}}
\newcommand {\eus}{\EuScript}
\newcommand {\gS}{{\eus S}}
\newcommand {\gZ}{{\eus Z}}
\newcommand {\esi}{\varepsilon}
\newcommand {\ap}{\alpha}
\newcommand {\lb}{\lambda}
\newcommand {\vp}{\varphi}
\newcommand {\ca}{{\mathcal A}}
\newcommand {\cb}{{\mathcal B}}
\newcommand {\cF}{{\mathcal F}}
\newcommand {\BV}{{\mathbb{V}}}
\newcommand {\BC}{{\mathbb C}}
\newcommand {\BQ}{{\mathbb Q}}
\newcommand {\BZ}{{\mathbb Z}}
\newcommand {\md}{/\!\!/}
\newcommand {\ads}{ {\mathrm{ad}}^* }
\newcommand {\codim}{{\mathrm{codim\,}}}
\newcommand {\ind}{{\mathrm{ind\,}}}
\newcommand {\Lie}{{\mathrm{Lie\,}}}
\newcommand {\Ima}{{\mathsf{Im\,}}}
\newcommand {\rk}{{\mathsf{rk\,}}}
\newcommand {\spe}{{\mathsf{Spec\,}}}
\newcommand {\supp}{{\mathsf{supp}}}
\newcommand {\trdeg}{{\mathrm{trdeg\,}}}
\newcommand {\tri}{{\mathfrak{sl}}_2}
\newcommand {\sltri}{{\mathfrak{sl}}_3}
\newcommand {\GR}[2]{{\textrm{{\sf\bfseries #1}}}_{#2}}
\newcommand {\ov}{\overline}
\newcommand {\un}{\underline}
\newcommand {\beq}{\begin{equation}}
\newcommand {\eeq}{\end{equation}}
\newcommand {\nap}{\n_{\{\ap\}}}
\newcommand {\napn}{\n_{\{\ap_{n}\}}}
\newcommand {\bb}{\boldsymbol{b}}
\newcommand {\bxi}{{\boldsymbol{\zeta}}}
\newcommand {\CP}{{\sf CP}}
\newcommand{\curle}{\preccurlyeq}
\renewcommand{\le}{\leqslant}
\renewcommand{\ge}{\geqslant}
\renewcommand{\lg}{\langle}
\newcommand{\rg}{\rangle}
\begin{document}
\setlength{\parskip}{2pt plus 4pt minus 0pt}
\hfill {\scriptsize October 24, 2023} 
\vskip1ex

\title[Properties of nilradicals]%
{The Frobenius semiradical, generic stabilisers, and Poisson centre for nilradicals}
\author[Dmitri Panyushev]{Dmitri I. Panyushev}
\address{Institute for Information Transmission Problems, Moscow 127051, Russia}
\email{panyush@mccme.ru}
\keywords{coadjoint action, Kostant cascade, Frobenius semiradical, Poisson centre}
\subjclass[2020]{14L30,17B20, 17B22, 17B30}
\begin{abstract}
Let $\g$ be a complex simple Lie algebra and $\n$ the nilradical of a parabolic subalgebra of $\g$. We 
consider some properties of the coadjoint representation of $\n$ and related algebras of invariants.
This includes {\sf (i)} the problem of existence of generic stabilisers, {\sf (ii)} a description of the Frobenius semiradical of $\n$ and the Poisson 
centre $\eus Z(\n)$ of the symmetric algebra $\gS(\n)$,  {\sf (iii)} the structure of $\gS(\n)$ as $\eus Z(\n)$-module, and {\sf (iv)} the description of square integrable (=\,quasi-reductive) nilradicals. Our main technical tools are the Kostant cascade in the set of positive roots of $\g$ and the notion of optimisation of $\n$.
\end{abstract}
\maketitle

\section{Introduction}     \label{sect:intro}
\subsection{}
Let $G$ be a simple algebraic group with $\g=\Lie G$, $\g=\ut\oplus\te\oplus\ut^-$ a fixed 
triangular decomposition, and $\be=\ut\oplus\te$ the fixed Borel subalgebra. Then $\Delta$ is the root 
system of $(\g,\te)$, $\Delta^+$ is the set of positive roots corresponding  to $\ut$, and $\theta$ is the 
highest root in $\Delta^+$. Write $U,T,B$ for the connected subgroups of $G$ corresponding to 
$\ut,\te,\be$. 

Let $P=L{\cdot}N$ be a parabolic subgroup of $G$, with the unipotent radical $N$ and a Levi subgroup 
$L$. Then $\n=\Lie N$ is the nilradical of $\p=\Lie P$. The unipotent radicals of the parabolic subgroups 
provide an important class of non-reductive groups. 
For instance, $N$ is a {\it Grosshans subgroup\/} of 
$G$~\cite[Theorem\,16.4]{gr97}. Various results on the coadjoint representation of $\n$ can be found in \cite{ag85,ag03,ooms2,CP}.
Our main goal is to elaborate on invariant-theoretic properties of the coadjoint representation $(N:\n^*)$,
but we also consider actions of some larger unipotent groups on $\n^*$. 

Without loss of generality, we may assume that 
$\p$ is {\it standard}, i.e., $\p\supset\be$. Then $\n\subset \ut$ is a sum of root spaces and  $\Delta(\n)$ 
denotes the corresponding set of positive roots. Unless otherwise stated, ``a nilradical'' (in $\g$) means 
``the nilradical of a standard parabolic subalgebra'' of $\g$. 
Let $\eus K=\{\beta_1,\dots,\beta_m\}$ 
be the {\it Kostant cascade\/} in $\Delta^+$. It is a poset, and $\beta_1=\theta$ is the unique maximal 
element of $\eus K$, see  Section~\ref{subs:MCP} for details. To each $\n$, we attach the subposet 
$\eus K(\n)=\eus K\cap\Delta(\n)$. Another ingredient is the {\it optimisation\/} of $\n$. By definition, it is
the maximal nilradical, $\tilde\n$, such that $\eus K(\n)=\eus K(\tilde\n)$. If $\n=\tilde\n$, then $\n$ is said 
to be {\it optimal}. An explicit description of $\tilde\n$ via $\eus K(\n)$ is given in Section~\ref{subs:optim}. 
Properties of the optimal nilradicals are better, and in order to  
approach arbitrary nilradicals, it is convenient to consider first the optimal ones. Roughly speaking, the 
output of this article is that to a great extent invariant-theoretic properties of 
$\n$ are determined by $\eus K(\n)$ and $\tilde\n$.

\subsection{} 
Let $\q=\Lie Q$ be a Lie algebra. As usual, $\xi\in\q^*$ is said to be {\it regular}, if the stabiliser $\q^\xi$ 
has minimal dimension. Then $\q^*_{\sf reg}$ denotes the set of all regular points and 
$\ind\q:=\dim\q^\xi$ for any $\xi\in\q^*_{\sf reg}$. If $\ind\q=0$, then $\q$ is called {\it Frobenius}. Set 
$\bb(\q):=(\dim\q+\ind\q)/2$. By definition, the {\it Frobenius semiradical\/} of $\q$ is 
$\cF(\q)=\sum_{\xi\in\q^*_{\sf reg}}\q^\xi$. Hence $\cF(\q)=0$ if and only if $\q$ is Frobenius. Clearly, 
$\cF(\q)$ is a characteristic ideal of $\q$. This notion and basic results on it
are due to A.\,Ooms~\cite{ooms,ooms2}.

The symmetric algebra of $\q$, $\gS(\q)$, is a Poisson algebra equipped with the Lie--Poisson bracket $\{\ ,\,\}$. The algebra of symmetric invariants $\gS(\q)^\q$ is the centre of $(\gS(\q), \{\ ,\,\})$, 
i.e.,
\[
  \gS(\q)^\q=\gZ(\q)=\{F\in \gS(\q)\mid \{F,x\}=0 \ \ \forall x\in \q\}=\{F\in \gS(\q)\mid \{F,P\}=0 \ \ \forall P\in \gS(\q)\} .
\]
If the group $Q$ is connected, then $\gS(\q)^\q=\BC[\q^*]^Q$, i.e., the Poisson centre $\gZ(\q)$ is also 
the algebra of $Q$-invariant polynomial functions on $\q^*$. If $\mathcal P\subset \gS(\q)$ is a 
Poisson-commutative subalgebra, then $\trdeg\mathcal P\le \bb(\q)$~\cite[0.2]{vi90} and this upper 
bound is always attained~\cite{sad}. Therefore, if $\rr\subset \q$ is a Lie subalgebra, then 
$\bb(\rr)\le \bb(\q)$. The passage $\n\leadsto\tilde\n$ has the property that 
$\bb(\n)=\bb(\tilde\n)$. This implies that $\gZ(\tilde\n)=\gS(\tilde\n)^{\tilde\n}\subset \gS(\n)$, see~\cite[Prop.\,5.5]{CP}. 

An abelian subalgebra $\ah\subset\q$ is called a {\it commutative polarisation\/} 
(=\,\CP), if $\dim \ah=\bb(\q)$. Then $\bb(\ah)=\bb(\q)$. A complete classification of the nilradicals with 
\CP\ is obtained in~\cite{CP}. In Section~\ref{subs:role}, we use Rosenlicht's theorem to provide simple 
proofs of some basic properties of \CP's.

\subsection{}    \label{subs:sgp-&-F(n)}
For $\n=\p^{\sf nil}$, let $\n^-\subset\ut^-$ be the opposite nilradical, i.e., $\Delta(\n^-)=-\Delta(\n)$. 
Then $\g=\p\oplus\n^-$. Let $\g_\gamma$ denote the roots space of $\gamma\in\Delta$ and $e_\gamma\in\g_\gamma$ a nonzero vector. Consider the space
$\ce=\bigoplus_{\beta\in\eus K(\n)}\g_{-\beta}\subset\n^-$ and  
$\bxi=\sum_{\beta\in\eus K(\n)}e_{-\beta}\in\ce$. Using the vector space isomorphism 
$\n^*=\g/\p\simeq\n^-$, one can regard $\ce$ as a subspace of $\n^*$ and $\bxi$ as an element of 
$\n^*$. We say that $\ce\subset\n^*$ is a {\it cascade subspace\/} and $\bxi\in\ce$ is a {\it cascade point}. 
As usual, $\xi\in\n^*$ is called $N$-{\it generic}, if there is an open subset $\Omega\in \n^*$ such 
that $\xi\in\Omega$ and the stabiliser $\n^\xi$ is $N$-conjugate to $\n^{\xi'}$ for any $\xi'\in\Omega$. 
Any stabiliser $\n^{\nu}$ with $\nu\in\Omega$ is said to be $N$-{\it generic}, too. 

 In Section~\ref{sect:sgp-nil}, we prove that the action $(N:\n^*)$ has 
$N$-generic stabilisers if and only if the stabiliser $\n^\bxi$ is generic (and the latter is not always the 
case!). Moreover, $N$-generic stabilisers always exist if $\n$ is optimal. If $\n$ is not optimal, then one 
can consider the linear action of the larger group $\tilde N=\exp(\tilde\n)\supset N$ on $\n^*$. We 
prove that the action $(\tilde N:\n^*)$ always has an $\tilde N$-generic stabiliser, and $\tilde\n^\bxi$ is 
such a stabiliser. Actually, the equality $\tilde\n^\bxi=\n^\bxi$ holds here. For any $\n$, we give an explicit 
formula for $\n^\bxi$ via $\eus K(\n)$ and $\tilde\n$, which shows that $\n^\bxi$ is $T$-stable. Using that 
formula and the criterion for the coadjoint representations~\cite[Cor.\,1.8(i)]{tayu1}, one easily verifies 
whether the stabiliser $\n^\bxi$ is generic in each concrete example. 
For $\GR{A}{n}$, the nilradicals having a generic stabiliser for $(N:\n^*)$ are explicitly described; while for $\GR{C}{n}$, all nilradicals have a generic stabiliser, see Section~\ref{sect:generic-sl-sp}. 
A general construction of nilradicals without generic stabilisers is also provided. 

We prove that  $\cF(\n)$ is the $\be$-stable ideal of $\n$
generated by $\n^\bxi$ (regardless of the presence of generic stabilisers). Then our formula for $\n^\bxi$ 
allows us to explicitly describe $\cF(\n)$ for $\GR{A}{n}$ and $\GR{C}{n}$. For any $\g$, we provide a 
criterion for the equality $\cF(\n)=\n$ and give the complete list of nilradicals with this property.
Another observation is that if $\n\subset \n'\subset\tilde\n$, then $\cF(\n')\subset\cF(\n)$.

\subsection{}    
Since $\n$ is $B$-stable, one can consider the algebra of $Q$-invariants $\gS(\n)^Q=\BC[\n^*]^Q$ for 
any subgroup $Q\subset B$, and we are primarily interested in the unipotent subgroups $U$ and 
$\tilde N=\exp(\tilde\n)\subset U$. In Section~\ref{sect:centre-&-factor}, we prove that 
\beq        \label{eq:intro1}
   \gS(\n)^U=\gS(\n)^{\tilde N}=\gS(\tilde\n)^U=\gS(\tilde\n)^{\tilde N}
\eeq
and this common algebra is polynomial, of Krull dimension $\#\eus K(\n)$. In particular, for any optimal 
nilradical $\tilde\n$, the Poisson centre of $\gS(\tilde\n)$ is a polynomial algebra. If $\n\ne\tilde\n$,
then $\gS(\n)^N$ does not occur in~\eqref{eq:intro1}. This algebra is not always
polynomial and its Krull dimension equals $\#\eus K(\n)+\dim(\tilde\n/\n)$. Nevertheless, $\gS(\n)^N$ 
shares many properties with algebras of invariants of reductive groups. For instance, $\gS(\n)^N$ is 
finitely generated~\cite[Lemma\,4.6]{jos77} and we prove that the affine variety 
$\n^*\md N:=\spe\gS(\n)^N$  has rational singularities.

Using results on $\eus K(\n)$ and $\n^\bxi$, we describe the nilradicals having the property that 
$\ind\n=\dim\z(\n)$, where $\z(\n)$ is the centre of $\n$. By~\cite{MW}, this property is equivalent to that 
the Lie group $N$ has a ``square integrable representation''. Therefore, such nilpotent Lie algebras are 
sometimes called ``square integrable'', see~\cite{ag85,ag03}. From a modern point of view, the square 
integrable nilradicals are precisely the "quasi-reductive" ones, see \cite{bm,DKT,MY12}. Our 
description shows that all square integrable nilradicals are metabelian. 

As $\gS(\n)^U$ is a polynomial algebra, we are interested in question whether $\gS(\n)$ is a free
$\gS(\n)^U$-module. Equivalently, when is the quotient map  $\pi:\n^*\to \n^*\md U=\spe\gS(\n)^U$
equidimensional? We prove several assertions for $U$ or (what is the same) $\tilde N$.

\textbullet\ \ If $\n$ has a \CP, then $\gS(\n)$ is a free module over $\gS(\n)^U=\gS(\n)^{\tilde N}$. This includes {\bf all} nilradicals for $\g=\slno$ or $\spn$.

\textbullet\ \ In particular, if $\n$ is the optimisation of a nilradical with \CP\ (any $\g$) or any 
optimal nilradical in $\slno$ or $\spn$,  then $\gS(\n)$ is a free module over its Poisson centre 
$\eus Z(\n)$. This also implies that in these cases the enveloping algebra $\eus U(\n)$ is a free module over its centre.

\subsection{Structure of the article} 
In Section~\ref{sect:prelim}, we recall basic facts on $\eus K$ and (optimal) nilradicals. 
In Section~\ref{sect:on-CP,-sgp-Frob_semirad}, the necessary information is gathered on generic 
stabilisers, the Frobenius semiradical, and commutative polarisations. We also include 
invariant-theoretic proofs for some properties of commutative polarisations.
Our results on generic stabilisers 
and $\cF(\n)$ for a nilradical $\n\subset\g$ are gathered in Section~\ref{sect:sgp-nil}, 
whereas Section~\ref{sect:generic-sl-sp} contains explicit results for $\g=\slno$ or $\spn$. 
In Section~\ref{sect:centre-&-factor}, we study various algebras of invariants related to the 
coadjoint representation of $\n$, and in Section~\ref{sect:square-int}, we classify the square 
integrable nilradicals. Topics related to the equidimensionality of the quotient map 
$\pi:\n^*\to \n^*\md U$ are treated in Section~\ref{sect:EQ-i-module}. The lists of cascade roots for all 
$\g$ and the Hasse diagrams of some posets $\eus K$ are presented in Appendix~\ref{sect:tables}.

\un{Main notation}. 
Throughout,  $\g=\Lie(G)$ is a simple Lie algebra. Then
\begin{itemize}
\item[--] $\be$ is a fixed Borel subalgebra of $\g$ with $\ut=[\be,\be]$;
\item[--] $\te$ is a fixed Cartan subalgebra in $\be$ and $\Delta$ is the root system of $\g$ with respect to $\te$;
\item[--] $\Delta^+$ is the set positive roots corresponding to $\ut$, and 
%\item[--]  
$\theta\in\Delta^+$ is the highest root;
\item[--] $\Pi=\{\ap_1,\dots,\ap_{\rk\g}\}$ is the set of simple roots in $\Delta^+$; 
\item[--] $\te^*_\BQ$ is the $\BQ$-vector subspace of $\te^*$ spanned by $\Delta$, and $(\ ,\, )$ is the 
positive-definite form on $\te^*_\BQ$ induced by the Killing form on $\g$; 
\item[--] If $\gamma\in\Delta$, then $\g_\gamma$ is the root space in $\g$ and 
$e_\gamma\in\g_\gamma$ is a nonzero vector;
\item[--]  If $\ka\subset\ut^\pm$ is a $\te$-stable subspace, then $\Delta(\ka)\subset \Delta^\pm$ is the set of 
roots of $\ka$;
\item[--]  $\bb(\q)=(\dim\q+\ind\q)/2$ for a Lie algebra $\q$;
\item[--]  in the explicit examples, the Vinberg--Onishchik numbering of simple roots of $\g$ is used, 
see~\cite[Table\,1]{VO}.
\end{itemize}

%%%%%%%%  Section 2   %%%%%%%%
\section{Generalities on the cascade and nilradicals}
\label{sect:prelim}

\subsection{The root order in $\Delta^+$ and the Heisenberg subset}
\label{subs:root-order}
We identify $\Pi$ with the vertices of the Dynkin diagram of $\g$. For any $\gamma\in\Delta^+$, let 
$[\gamma:\ap]$ be the coefficient of $\ap\in\Pi$ in the expression of $\gamma$ via $\Pi$. The 
{\it support\/} of $\gamma$ is $\supp(\gamma)=\{\ap\in\Pi\mid [\gamma:\ap]\ne 0\}$. As is well known, 
$\supp(\gamma)$ is a connected subset of the Dynkin diagram. For instance, $\supp(\theta)=\Pi$
and $\supp(\ap)=\{\ap\}$. Let ``$\curle$'' denote the {\it root order\/} in $\Delta^+$, i.e., we set 
$\gamma\curle\gamma'$ if $[\gamma:\ap]\le [\gamma':\ap]$ for all $\ap\in\Pi$. Then 
$(\Delta^+,\curle)$ is a graded poset, and we write $\gamma\prec\gamma'$ if $\gamma\curle\gamma'$ and $\gamma\ne\gamma'$. 
\\ \indent
An {\it upper ideal\/} of $(\Delta^+,\curle)$ is a subset $I$ such that if $\gamma\in I$ and 
$\gamma\prec\gamma'$, then $\gamma'\in I$. Therefore, $I$ is an upper ideal of $\Delta^+$ if and only if 
$\rr=\bigoplus_{\gamma\in I} \g_\gamma$ is a $\be$-stable ideal of $\ut$, i.e., $[\be,\rr]\subset\rr$. 

For a dominant weight $\lb\in\te^*_\BQ$, set $\Delta_\lb=\{\gamma\in\Delta \mid (\lb,\gamma)=0 \}$ and
$\Delta^\pm_\lb=\Delta_\lb\cap \Delta^\pm$. Then $\Delta_\lb$ is the root system of a semisimple 
subalgebra $\g^{\perp\lb}\subset \g$ and $\Pi_\lb=\Pi\cap\Delta^+_\lb$ is the set of simple roots
in $\Delta^+_\lb$. Then
\begin{itemize} 
\item \  $\p_\lb=\g^{\perp\lb}+\be$ is a standard parabolic subalgebra of $\g$; 
\item \  the set of roots of the nilradical $\n_\lb=\p_\lb^{\sf nil}$ is \ $\Delta^+\setminus \Delta^+_\lb$. It is 
also denoted by $\Delta(\n_\lb)$.
\end{itemize}
If $\lb=\theta$, then  $\n_\theta$ is a {\it Heisenberg Lie algebra} (Heisenberg nilradical) and 
$\eus H_\theta:=\Delta(\n_\theta)$ is called the {\it Heisenberg subset\/} (of $\Delta^+$).

\subsection{The cascade poset}
\label{subs:MCP} 

The recursive construction of the Kostant cascade in $\Delta^+$ begins with $\beta_1=\theta$. On the 
next step, we take the highest roots in the irreducible subsystems of $\Delta_\theta$. These roots are 
called the {\it descendants} of $\beta_1$. The same construction is then applied to every descendant 
of $\beta_1$, and so on. This procedure eventually terminates and yields a set 
$\eus K=\{\beta_1,\beta_2,\dots,\beta_m\}\subset\Delta^+$, which is called {\it the Kostant cascade}. 
The roots in $\eus K$ are {\it strongly orthogonal}, which means that $\beta_i\pm\beta_j\not\in\Delta$ 
for all $i,j$. We make $\eus K$ a poset by letting that $\beta_i$ covers $\beta_j$ if and only if $\beta_j$ is 
a descendant of $\beta_i$. Then $\beta_1$ is the unique maximal element of $\eus K$.
We refer to~\cite[Sect\,2]{jos77}, \cite{ko12}, \cite[2.2]{CP} for more details. Let us summarise the main features of $\eus K$.

\begin{itemize}
\item $\eus K$ is a maximal set of strongly orthogonal roots in $\Delta^+$;
\item each $\beta_i$ is the highest root of the irreducible root system 
$\Delta\lg i\rg\subset\Delta$ with simple roots \ $\supp(\beta_i)$;
\item  $\eus K$ is also a subposet of $(\Delta^+,\curle)$, which provides the same 
poset structure as above;
\item  one has $\beta_j\prec \beta_i$ if and only if $\supp(\beta_j)\varsubsetneq\supp(\beta_i)$;
\item $\beta_j$ and $\beta_i$ are incomparable in $\eus K$ if and only if 
$\supp(\beta_j)\cap\supp(\beta_i)=\varnothing$; 
\item The numbering of $\eus K$ is not canonical. It is only required to be a linear extension of $(\eus K,\curle)$, i.e., if $\beta_j\prec\beta_i$, then $j>i$. In specific examples considered below we use the numbering of cascade roots given in Appendix~\ref{sect:tables}.
\end{itemize}

\noindent
Using the decomposition $\Delta^+ =\Delta^+_\theta \sqcup \eus H_\theta$ and induction on $\rk\g$, one
 obtains the disjoint union parametrised by $\eus K$: 
\beq            \label{eq:decomp-Delta}
     \Delta^+=\bigsqcup_{i=1}^m \eus H_{\beta_i} ,
\eeq
where $\eus H_{\beta_i}$ is the Heisenberg subset of $\Delta\lg i\rg^+$ and 
$\eus H_{\beta_1}=\eus H_\theta$. For $1\le i\le m$, let $\g\lg i\rg\subset \g$ be the simple Lie algebra with root system 
$\Delta\lg i\rg$. The geometric counterpart of \eqref{eq:decomp-Delta} is the vector space sum 
$\ut=\bigoplus_{i=1}^m  \h_i$, where $\h_i$ is the Heisenberg Lie algebra in $\g\lg i\rg$ and 
$\Delta(\h_i)=\eus H_{\beta_i}$. In particular, $\h_1=\n_\theta$. For each $\beta_i\in\eus K$, we set 
$\Phi(\beta_i)=\Pi\cap \eus H_{\beta_i}$. It then follows from~\eqref{eq:decomp-Delta} that 
$
   \displaystyle        \Pi=\bigsqcup_{\beta_i\in\eus K} \Phi(\beta_i) 
$.
Note that $\#\Phi(\beta_i)\le 2$ and $\#\Phi(\beta_i)= 2$ if and only if the algebra $\g\lg i\rg$ is of type 
$\GR{A}{n}$ with $n\ge 2$.
Our definition of the subsets $\Phi(\beta_i)$ yields the well-defined map  $\Phi^{-1}: \Pi\to \eus K$, 
where $\Phi^{-1}(\ap)=\beta_i$ if $\ap\in \Phi(\beta_i)$. Note that $\ap\in \supp(\Phi^{-1}(\ap))$ and
$\ap\in\Phi(\Phi^{-1}(\ap))$.
We think of the cascade poset as a triple $(\eus K, \curle, \Phi)$.
The corresponding Hasse diagrams, with subsets $\Phi(\beta_i)$ attached to every node, are 
depicted in~\cite[Section\,6]{CP}. Some of them are included in Appendix~\ref{sect:tables}. 

\noindent
Obviously, $\#\eus K\le \rk\g$, and $\#\eus K=\rk\g$ if and only if each $\beta_i$ is a multiple of a 
fundamental weight for $\g\lg i\rg$. Recall that $\theta$ is a multiple of a fundamental weight of $\g$ if 
and only if $\g$ is not of type $\GR{A}{n}$, $n\ge 2$. It is well known that $\#\eus K=\rk\g$ if and only if 
$\ind\be=0$. This happens exactly if $\g\not\in\{\GR{A}{n}, \GR{D}{2n+1}, \GR{E}{6}\}$ and then 
$\Phi^{-1}$ yields a bijection between $\eus K$ and $\Pi$.

\subsection{Nilradicals and optimal nilradicals}
\label{subs:optim}
Let $\p\supset\be$ be a standard parabolic subalgebra of $\g$, with nilradical $\n=\p^{\sf nil}$. If 
$\Pi\cap\Delta(\n)=\eus T$, then we write $\n=\n_{\eus T}$ and $\p=\p_{\eus T}$. Here $\eus T$ is the set of minimal elements 
of the poset $(\Delta(\n), \curle)$ and 
$\Pi\setminus \eus T$ is the set of simple roots for the standard Levi subalgebra $\el_{\eus T}\subset\p_{\eus T}$.
Clearly, $\eus T\ne \varnothing$ if and only if $\n_{\eus T}\ne\{0\}$.

The integer $d_{\eus T}=\sum_{\ap\in {\eus T}}[\theta:\ap]$ is the {\it depth\/} of $\n_{\eus T}$. 
Letting 
\[
  \Delta_{\eus T}(i)=\{\gamma\in \Delta^+\mid \sum_{\ap\in {\eus T}}[\gamma:\ap]=i\} \ \text{ and } \ 
\n_{\eus T}(i)=\bigoplus_{\gamma\in \Delta_{\eus T}(i)}\g_\gamma ,
\] 
one obtains the partition $\Delta(\n_{\eus T})=\bigsqcup_{i=1}^{d_{\eus T}}\Delta_{\eus T}(i)$ and the 
canonical $\BZ$-grading
\[
        \n_{\eus T}=\bigoplus_{i=1}^{d_{\eus T}}\n_{\eus T}(i) .
\]
The following is well known and easy.
\begin{lm}               \label{lm:easy-Z-grad}
If $\{\n_{\eus T}^{(i)}\}_{i\ge 1}$ denotes the lower central series of\/ $\n_{\eus T}$, then 
$\n_{\eus T}^{(i)}=\bigoplus_{j\ge i}\n_{\eus T}(j)$. The centre of\/ $\n_{\eus T}$ is\/ $\z(\n)=\n_{\eus T}(d_{\eus T})$. Hence
$\n_{\eus T}$ is abelian if and only if $d_{\eus T}=1$, i.e., ${\eus T}=\{\ap\}$ and $[\theta:\ap]=1$.
\end{lm}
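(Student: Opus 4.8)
The statement to prove is Lemma~\ref{lm:easy-Z-grad}, which has three assertions about the standard $\BZ$-grading $\n_{\eus T}=\bigoplus_{i=1}^{d_{\eus T}}\n_{\eus T}(i)$: (a) the lower central series satisfies $\n_{\eus T}^{(i)}=\bigoplus_{j\ge i}\n_{\eus T}(j)$; (b) the centre is $\z(\n_{\eus T})=\n_{\eus T}(d_{\eus T})$; (c) $\n_{\eus T}$ is abelian iff $d_{\eus T}=1$, iff $\eus T=\{\ap\}$ with $[\theta:\ap]=1$. The natural strategy is to work entirely at the level of roots, using the additivity of the grading function $\deg(\gamma):=\sum_{\ap\in\eus T}[\gamma:\ap]$ under root addition, and the fact that $[\g_\gamma,\g_\delta]=\g_{\gamma+\delta}$ whenever $\gamma+\delta\in\Delta$ (and is $0$ otherwise), a standard property of root spaces in simple Lie algebras.

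\emph{Step 1 (the easy inclusion).} Since $\deg$ is additive, $[\n_{\eus T}(i),\n_{\eus T}(j)]\subseteq\n_{\eus T}(i+j)$; hence by induction $\n_{\eus T}^{(i)}\subseteq\bigoplus_{j\ge i}\n_{\eus T}(j)$ for all $i\ge1$. \emph{Step 2 (the reverse inclusion for (a)).} This is the crux: I must show every root $\gamma$ with $\deg(\gamma)=j\ge i$ has $\g_\gamma\subseteq\n_{\eus T}^{(i)}$. I would argue by induction on $j$. For $j=1$ the claim is $\n_{\eus T}(1)\subseteq\n_{\eus T}^{(1)}=\n_{\eus T}$, which is trivial. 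For the inductive step, take $\gamma\in\Delta^+$ with $\deg(\gamma)=j\ge 2$; I want to write $\gamma=\gamma'+\delta$ with $\delta\in\Delta(\n_{\eus T})$, $\deg(\delta)\ge 1$, and $\deg(\gamma')\ge j-1$ (in fact $\deg(\gamma')=j-\deg(\delta)\ge 1$ as well), so that $e_\gamma\in[\g_{\gamma'},\g_\delta]\subseteq[\n_{\eus T}^{(j-1)},\n_{\eus T}]\subseteq\n_{\eus T}^{(j)}\subseteq\n_{\eus T}^{(i)}$. The existence of such a decomposition is the standard fact that any non-simple positive root $\gamma$ can be written as $\gamma'+\ap$ with $\ap\in\Pi$, $\gamma'\in\Delta^+$; if one can choose $\ap\in\eus T$ then $\deg(\gamma')=j-1\ge 1$ and we are done directly, and if every such $\ap$ lies outside $\eus T$ then $\deg(\gamma')=j$ and we recurse on the strictly smaller root $\gamma'$ (height drops) until we peel off an element of $\eus T$. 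I expect this bookkeeping — making sure the recursion on root height does terminate with a summand having positive degree and the complementary root staying in $\Delta(\n_{\eus T})$ — to be the main obstacle, though it is a routine well-known argument.

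\emph{Step 3 (the centre).} Once (a) is established, note $\n_{\eus T}(d_{\eus T})$ is central because for any $\gamma$ with $\deg(\gamma)=d_{\eus T}$ and any $\delta\in\Delta(\n_{\eus T})$ one has $\deg(\gamma+\delta)>d_{\eus T}$, so $\gamma+\delta\notin\Delta(\n_{\eus T})$, and since $\gamma+\delta$ would still be positive, $\gamma+\delta\notin\Delta$ at all; hence $[\g_\gamma,\g_\delta]=0$. Conversely, if $x=\sum x_\gamma\in\z(\n_{\eus T})$ has a nonzero component $x_\gamma$ with $\deg(\gamma)<d_{\eus T}$, I must produce $\delta\in\Delta(\n_{\eus T})$ with $\gamma+\delta\in\Delta(\n_{\eus T})$; this follows by applying a suitable root vector from $\n_{\eus T}$ (e.g. using that $\theta$ has degree $d_{\eus T}>\deg(\gamma)$ and a chain of simple-root additions connecting $\gamma$ upward inside $\Delta^+$, peeling off at least one simple root from $\eus T$). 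Taking $x_\gamma$ of minimal degree among nonzero components, the bracket $[\g_\delta, x]$ has a nonzero component in $\n_{\eus T}(\deg(\gamma)+\deg(\delta))$ coming solely from $[\g_\delta,x_\gamma]$ (by minimality no cancellation occurs), contradicting centrality. \emph{Step 4 (the abelian criterion).} From (a), $\n_{\eus T}^{(2)}=\bigoplus_{j\ge 2}\n_{\eus T}(j)$, so $\n_{\eus T}$ is abelian iff there are no roots of degree $\ge 2$, i.e. iff $d_{\eus T}=1$; and $d_{\eus T}=\sum_{\ap\in\eus T}[\theta:\ap]=1$ with all $[\theta:\ap]\ge1$ forces $\eus T=\{\ap\}$ and $[\theta:\ap]=1$, which conversely clearly gives $d_{\eus T}=1$. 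This completes the proof. \qed
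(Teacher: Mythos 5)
The paper itself offers no proof of this lemma, stating only that it is ``well known and easy,'' so there is nothing in the paper to compare against; the following is an assessment of your argument on its own terms.

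Your overall structure (grading additivity for the easy inclusion; a decomposition $\gamma=\gamma'+\delta$ for the reverse; then centre and the abelian criterion) is the right one, but the crucial Step~2 has a genuine gap. The recursion you describe --- repeatedly writing $\gamma_k=\gamma_{k+1}+\ap_k$ and stopping when the first $\ap_m$ lies in $\eus T$ --- does \emph{not} produce a decomposition $\gamma=\gamma'+\delta$ with $\delta$ a root of degree $1$: the peeled-off sum $\ap_0+\cdots+\ap_m$ need not be a root at all. What the recursion actually gives is a chain of roots $\gamma=\gamma_0,\gamma_1,\dots,\gamma_{m+1}$ with $\deg\gamma_{m+1}=j-1$; by the induction hypothesis $e_{\gamma_{m+1}}\in\n_{\eus T}^{(j-1)}$ and hence $e_{\gamma_m}=c[e_{\gamma_{m+1}},e_{\ap_m}]\in\n_{\eus T}^{(j)}$ since $\ap_m\in\eus T$. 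But to climb back up to $e_\gamma$ you must bracket with $e_{\ap_{m-1}},\dots,e_{\ap_0}$, and those simple roots lie \emph{outside} $\eus T$, so the corresponding root vectors lie in the Levi $\el_{\eus T}$, not in $\n_{\eus T}$. The step $[\n_{\eus T}^{(j)},\g_{\ap_k}]\subseteq\n_{\eus T}^{(j)}$ therefore needs the additional observation that each term $\n_{\eus T}^{(j)}$ of the lower central series is a $\p_{\eus T}$-stable (in particular $\el_{\eus T}$-stable) ideal of $\n_{\eus T}$; with that remark, inducting on the height of $\gamma$ (rather than on $j$) makes your recursion go through cleanly. Alternatively, one can simply invoke the classical fact that for the $\BZ$-grading of a simple Lie algebra determined by a subset of $\Pi$, one has $[\g(1),\g(k)]=\g(k+1)$ for $k\ge 1$, which is precisely the decomposition you were aiming for.

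The same issue resurfaces in Step~3: the chain of simple-root additions from $\gamma$ up to $\theta$ may begin with several steps outside $\eus T$, so one cannot directly exhibit $\delta\in\Delta(\n_{\eus T})$ with $\gamma+\delta\in\Delta$. The clean fix is again to use stability: $\z(\n_{\eus T})$ is $\te$-stable (so one may assume the putative central element is a single root vector $e_\gamma$) and $\el_{\eus T}$-stable, so one may push $e_\gamma$ up along the chain through the non-$\eus T$ steps staying inside $\z(\n_{\eus T})$ until a step in $\eus T$ produces the contradiction. Your Step~1 and Step~4 are fine as stated.
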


Set $\eus K_{\eus T}=\eus K\cap \Delta(\n_{\eus T})$. Then $\theta=\beta_1\in \eus K_{\eus T}$ for any 
nonzero nilradical $\n_{\eus T}$. 

\begin{lm}[{\cite[Section\,2]{CP}}]   
 \label{lm:optim}
For any $\eus T\subset\Pi$, one has 
\begin{enumerate}
\item $\eus K_{\eus T}$ is an upper ideal of\/ $(\eus K,\curle)$;
\item $\eus T\subset\bigcup_{\beta_j\in \eus K_{\eus T}} \Phi(\beta_j)$ and\/ 
$\n_{\eus T}\subset\bigoplus_{\beta_j\in \eus K_{\eus T}} \h_{j}$.
\end{enumerate}
\end{lm}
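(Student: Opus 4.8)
The plan is to verify both assertions directly from the recursive structure of the cascade and the decomposition \eqref{eq:decomp-Delta}. I would argue by induction on $\rk\g$, since the cascade is built recursively and $\eus K_{\eus T}$ interacts naturally with the passage $\g \leadsto \g^{\perp\theta}$.

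\textbf{Part (1): $\eus K_{\eus T}$ is an upper ideal of $(\eus K,\curle)$.} Suppose $\beta_j \in \eus K_{\eus T}$ and $\beta_j \prec \beta_i$ in $\eus K$; I must show $\beta_i \in \eus K_{\eus T}$, i.e. $\beta_i \in \Delta(\n_{\eus T})$. By the bulleted properties of $\eus K$, $\beta_j \prec \beta_i$ means $\supp(\beta_j) \subsetneq \supp(\beta_i)$. Now $\beta_j \in \Delta(\n_{\eus T})$ means $\sum_{\ap \in \eus T}[\beta_j : \ap] \ge 1$, so there is some $\ap_0 \in \eus T \cap \supp(\beta_j)$. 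Then $\ap_0 \in \supp(\beta_i)$ as well, and since $\eus K$ is a subposet of $(\Delta^+,\curle)$ with $\beta_j \curle \beta_i$, we get $[\beta_i : \ap_0] \ge [\beta_j : \ap_0] \ge 1$, hence $\sum_{\ap \in \eus T}[\beta_i:\ap] \ge 1$, i.e. $\beta_i \in \Delta(\n_{\eus T})$. So (1) actually follows directly from the compatibility of the cascade order with the root order, without needing induction — the containment $\beta_j \curle \beta_i$ forces the support to grow coordinatewise, and membership in $\Delta(\n_{\eus T})$ depends only on having nonzero coefficient on some simple root in $\eus T$.

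\textbf{Part (2): the two containments.} For the inclusion $\eus T \subset \bigcup_{\beta_j \in \eus K_{\eus T}} \Phi(\beta_j)$: let $\ap \in \eus T$. By the partition $\Pi = \bigsqcup_{\beta_i \in \eus K}\Phi(\beta_i)$, there is a unique $\beta_i = \Phi^{-1}(\ap)$ with $\ap \in \Phi(\beta_i)$, hence $\ap \in \supp(\beta_i)$. Since $\ap$ is simple and $\ap \in \eus T$, we have $\ap \in \Delta(\n_{\eus T})$ (every simple root in $\eus T$ has depth $1$), and I claim this forces $\beta_i \in \Delta(\n_{\eus T})$: indeed $[\beta_i : \ap] \ge 1$ since $\ap \in \supp(\beta_i)$, so $\sum_{\ap' \in \eus T}[\beta_i : \ap'] \ge [\beta_i:\ap] \ge 1$. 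Thus $\beta_i \in \eus K_{\eus T}$ and $\ap \in \Phi(\beta_i)$, proving the first inclusion. For $\n_{\eus T} \subset \bigoplus_{\beta_j \in \eus K_{\eus T}} \h_j$: using the vector space decomposition $\ut = \bigoplus_{i=1}^m \h_i$ with $\Delta(\h_i) = \eus H_{\beta_i}$, it suffices to show that every $\gamma \in \Delta(\n_{\eus T})$ lies in $\eus H_{\beta_j}$ for some $\beta_j \in \eus K_{\eus T}$. By \eqref{eq:decomp-Delta}, $\gamma \in \eus H_{\beta_j}$ for a unique $j$; I need $\beta_j \in \Delta(\n_{\eus T})$. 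Since $\gamma \in \Delta(\n_{\eus T})$, there is $\ap \in \eus T$ with $[\gamma:\ap] \ge 1$, so $\ap \in \supp(\gamma)$. Because $\gamma \in \eus H_{\beta_j}$ and $\beta_j$ is the highest root of $\Delta\lg j\rg$ with simple roots $\supp(\beta_j)$, we have $\supp(\gamma) \subset \supp(\beta_j)$, hence $\ap \in \supp(\beta_j)$ and therefore $[\beta_j : \ap] \ge 1$ (as $\beta_j = \theta$ of $\g\lg j\rg$ has full support with positive coefficients). Thus $\sum_{\ap' \in \eus T}[\beta_j:\ap'] \ge 1$, i.e. $\beta_j \in \eus K_{\eus T}$, and $\gamma \in \Delta(\h_j)$ as needed.

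\textbf{Main obstacle.} The only subtle point is the claim used repeatedly that $\gamma \in \eus H_{\beta_j}$ implies $\supp(\gamma) \subset \supp(\beta_j)$, and that $\beta_j$ has strictly positive coefficient on every simple root of $\g\lg j\rg$. The second is standard (the highest root of an irreducible system has full support). The first requires knowing that the Heisenberg subset $\eus H_{\beta_j}$ of $\Delta\lg j\rg^+$ is genuinely a subset of $\Delta\lg j\rg^+$, whose roots are supported on $\supp(\beta_j)$; this is built into the recursive construction of \eqref{eq:decomp-Delta} and is precisely how the geometric decomposition $\ut = \bigoplus \h_i$ is set up, so it may be cited. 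If one wanted a fully self-contained argument one would induct on $\rk\g$ following the recursion $\Delta^+ = \Delta^+_\theta \sqcup \eus H_\theta$, peeling off $\eus H_{\beta_1} = \eus H_\theta$ and applying the inductive hypothesis to $\Delta_\theta$; but given that Lemma~\ref{lm:optim} is cited from \cite[Section\,2]{CP}, I would simply assemble the short support-chasing argument above.
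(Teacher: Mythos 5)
Your argument is correct. Note, however, that the paper does not actually prove this lemma: it is stated with the attribution \cite[Section\,2]{CP}, so there is no proof in this paper to compare against. Your support-chasing derivation is a legitimate stand-alone proof, and it uses exactly the ingredients the paper makes available: that the cascade order is the restriction of the root order $\curle$ (so $\beta_j\prec\beta_i$ forces coefficientwise domination), that each $\eus H_{\beta_j}$ sits inside $\Delta\lg j\rg^+$ (so $\supp(\gamma)\subset\supp(\beta_j)$ whenever $\gamma\in\eus H_{\beta_j}$), and that the highest root $\beta_j$ of the irreducible system $\Delta\lg j\rg$ has strictly positive coefficient on every simple root in $\supp(\beta_j)$. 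The decomposition \eqref{eq:decomp-Delta} then reduces both halves of (2) to locating the unique $\beta_j$ whose Heisenberg subset contains a given root, exactly as you do. One very minor remark: in (1) you do not even need to pass through supports — the coefficientwise inequality $[\beta_i:\ap]\ge[\beta_j:\ap]$ for all $\ap$ already gives $\beta_i\in\Delta(\n_{\eus T})$ directly, which is essentially what you observe at the end of that paragraph.
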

\noindent
A standard parabolic subalgebra $\p_{\eus T}$  is said to be {\it optimal} if 
\[
     \eus T=\bigcup_{\beta_j\in \eus K_{\eus T}} \Phi(\beta_j).
\]
This goes back to~\cite[4.10]{jos77}, and we also apply this term to $\n_{\eus T}$. Then $\n_{\eus T}$ is optimal if and 
only if  $\n_{\eus T}=\bigoplus_{\beta_j\in \eus K_{\eus T}} \h_{j}$.  For a nonempty ${\eus T}\subset\Pi$, set 
$\tilde{\eus T}=\bigcup_{\beta_j\in \eus K_{\eus T}} \Phi(\beta_j)$ and consider the nilradical $\n_{\tilde{\eus T}}$. Then 
$\eus K_{\eus T}=\eus K_{\tilde{\eus T}}$ and 
$\n_{\eus T}\subset\n_{\tilde{\eus T}}=\bigoplus_{\beta_j\in \eus K_{\eus T}} \h_{i}$. Hence $\n_{\tilde{\eus T}}$ is optimal, 
it is the minimal optimal nilradical containing $\n_{\eus T}$, and it is the maximal element of the set
of nilradicals $\{\n'\mid \Delta(\n')\cap\eus K=\eus K_{\eus T}\}$.

\begin{df}
The nilradical $\n_{\tilde{\eus T}}$ is called the {\it optimisation} of $\n_{\eus T}$. 
\end{df}
\noindent 
If ${\eus T}\subset \Pi$ is not specified for a given nilradical $\n$, then $\eus K(\n):=\eus K\cap\Delta(\n)$ 
and we write $\tilde\n$ for the optimisation of $\n$.  

\begin{prop}[{cf.~\cite[2.4]{jos77}, \cite[2.3]{CP}}]   
\label{prop:jos}
Let\/ $\tilde\n$ be the optimisation of a nilradical\/ $\n$. Then 
\begin{itemize}
\item \ $\ind\n=\dim(\tilde\n/\n) + \# \eus K(\n)$;
\item \ $\ind\tilde\n=  \# \eus K(\tilde\n)= \# \eus K(\n)$ and 
$\bb(\n)=\bb(\tilde\n)$.
\end{itemize}
\end{prop}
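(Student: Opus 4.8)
The plan is to reduce both bullet points to a single computation of $\ind\n_{\eus T}$ in terms of the cascade data, following the approach of~\cite[2.4]{jos77}. First I would recall that for any $\be$-stable nilpotent Lie algebra $\n=\n_{\eus T}$ the index can be computed via the generic rank of the Kirillov form: $\ind\n=\dim\n-\max_{\xi\in\n^*}\rk B_\xi$, where $B_\xi(x,y)=\xi([x,y])$. The key structural input is the decomposition $\ut=\bigoplus_i\h_i$ with $\Delta(\h_i)=\eus H_{\beta_i}$ together with Lemma~\ref{lm:optim}(2), which says that $\n_{\eus T}$ sits inside $\bigoplus_{\beta_j\in\eus K_{\eus T}}\h_j$ and contains, in each $\h_j$ with $\beta_j\in\eus K_{\eus T}$, at least the root space $\g_{\beta_j}$ (since $\eus K_{\eus T}\subset\Delta(\n_{\eus T})$). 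The strong orthogonality of the $\beta_i$ and the disjointness~\eqref{eq:decomp-Delta} make the "different Heisenberg blocks" interact trivially as far as the bracket landing on the cascade covectors is concerned.

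The heart of the argument is the claim that the cascade point $\bxi=\sum_{\beta_j\in\eus K_{\eus T}}e_{-\beta_j}$, viewed in $\n^*$, is a regular element and that $\rk B_{\bxi}=\dim\n_{\eus T}-\#\eus K_{\eus T}-\dim(\tilde\n/\n)$; equivalently $\dim\n_{\eus T}^{\bxi}=\#\eus K(\n)+\dim(\tilde\n/\n)$. I would prove this in two steps. Step one: for the optimal nilradical $\tilde\n=\bigoplus_{\beta_j\in\eus K_{\eus T}}\h_j$, the form $B_{\bxi}$ restricted to $\h_j$ is (up to scalar) the standard symplectic form on $\h_j/\z(\h_j)$ coming from $\langle\ ,\,\rangle_{\beta_j}$, because in a Heisenberg algebra $\h_j$ one has $[\g_\gamma,\g_{\beta_j-\gamma}]\subset\g_{\beta_j}$ and $e_{-\beta_j}$ pairs nondegenerately with $\g_{\beta_j}$; moreover $B_{\bxi}(\h_i,\h_j)=0$ for $i\neq j$ by strong orthogonality ($\beta_i\pm\beta_j\notin\Delta$ forces $[\h_i,\h_j]$ to avoid all the $\g_{-\beta_k}$, in fact $[\h_i,\h_j]=0$). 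Hence $B_{\bxi}$ on $\tilde\n$ is a direct sum of the standard symplectic forms, with radical $\bigoplus_j\g_{\beta_j}$, giving $\ind\tilde\n=\#\eus K_{\eus T}$ and $\tilde\n^{\bxi}=\bigoplus_j\g_{\beta_j}$, which is also $\n^{\bxi}$. Step two: for general $\n_{\eus T}\subset\tilde\n$, the radical of $B_{\bxi}|_{\n_{\eus T}}$ is $\n_{\eus T}\cap(\text{radical on }\tilde\n)$ plus the correction coming from vectors of $\n_{\eus T}$ whose symplectic partner in $\tilde\n$ lies outside $\n_{\eus T}$; a dimension count using that the $\BZ$-grading of $\tilde\n$ is compatible with that of $\n_{\eus T}$ yields $\dim\n_{\eus T}^{\bxi}\ge\#\eus K_{\eus T}+\dim(\tilde\n/\n)$. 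Combined with the general lower bound $\ind\q\ge$ (anything realised by a concrete covector), and the known inequality $\ind\n_{\eus T}\le\#\eus K(\n)+\dim(\tilde\n/\n)$ from~\cite[2.4]{jos77} (or by exhibiting $\#\eus K_{\eus T}+\dim(\tilde\n/\n)$ independent invariants), one gets equality everywhere. This proves the first bullet.

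For the second bullet, apply the first bullet to $\tilde\n$ itself: since $\tilde\n$ is optimal, its own optimisation is $\tilde\n$, so $\dim(\widetilde{\tilde\n}/\tilde\n)=0$ and $\ind\tilde\n=\#\eus K(\tilde\n)$; and $\eus K(\tilde\n)=\eus K_{\tilde{\eus T}}=\eus K_{\eus T}=\eus K(\n)$ was established right before Definition~\ref{prop:jos} (the paragraph identifying $\n_{\tilde{\eus T}}$). Finally, $\bb(\n)=\tfrac12(\dim\n+\ind\n)=\tfrac12\bigl(\dim\n+\dim(\tilde\n/\n)+\#\eus K(\n)\bigr)=\tfrac12\bigl(\dim\tilde\n+\#\eus K(\tilde\n)\bigr)=\bb(\tilde\n)$, using $\dim\tilde\n=\dim\n+\dim(\tilde\n/\n)$.

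The main obstacle I anticipate is Step two: controlling exactly which symplectic partners survive when one passes from $\tilde\n$ down to $\n_{\eus T}$, i.e.\ showing the radical grows by precisely $\dim(\tilde\n/\n)$ and not more. The clean way around this is to avoid the delicate partner-counting and instead invoke~\cite[2.4]{jos77} for the upper bound $\ind\n_{\eus T}\le\#\eus K_{\eus T}+\dim(\tilde\n/\n)$ while only proving the matching lower bound $\dim\n_{\eus T}^{\bxi}\ge\#\eus K_{\eus T}+\dim(\tilde\n/\n)$ directly (the latter being easy: $\bigoplus_j\g_{\beta_j}\subset\n_{\eus T}^{\bxi}$ contributes $\#\eus K_{\eus T}$, and each root space in $\tilde\n\setminus\n$ whose $B_{\bxi}$-partner also lies in $\tilde\n\setminus\n$ pairs up, while a careful look at the grading shows the remaining $\tilde\n/\n$ directions are $B_{\bxi}$-orthogonal to all of $\n_{\eus T}$). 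This sidesteps the hard combinatorics by leaning on the already-published index formula.
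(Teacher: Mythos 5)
The paper gives no proof of Proposition~\ref{prop:jos}; it is imported from \cite[2.4]{jos77} and \cite[2.3]{CP}. Your Step~one has the right idea, but the parenthetical claim $[\h_i,\h_j]=0$ for $i\ne j$ is false: already in $\mathfrak{sl}_4$, with $\beta_1=\ap_1+\ap_2+\ap_3$ and $\beta_2=\ap_2$, one has $\ap_1\in\eus H_{\beta_1}$, $\ap_2\in\eus H_{\beta_2}$, yet $[\g_{\ap_1},\g_{\ap_2}]=\g_{\ap_1+\ap_2}\ne 0$. What you actually need is only the weaker statement $B_{\bxi}(\h_i,\h_j)=0$ for $i\ne j$, i.e.\ that $[\h_i,\h_j]$ misses every $\g_{\beta_k}$; this does hold (by the Heisenberg symmetry of the $\eus H_{\beta_l}$ together with the disjoint decomposition~\eqref{eq:decomp-Delta}), but it should be stated and justified as such rather than deduced from the false commutation.

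The more serious issue is that your inequalities run the wrong way, so the argument does not close. Since $\ind\q=\min_{\xi\in\q^*}\dim\q^\xi$, any concrete covector gives the \emph{upper} bound $\ind\q\le\dim\q^\xi$, not the ``general lower bound'' you invoke; dually, exhibiting $r$ algebraically independent $N$-invariants gives, via Rosenlicht's theorem, the \emph{lower} bound $\ind\n\ge r$, not the upper bound you assign to that device. After correcting the directions, what your plan delivers is $\ind\n\le\dim\n^\bxi$, the lower bound $\ind\n\ge\#\eus K(\n)+\dim(\tilde\n/\n)$ from invariants, and (from Step~two) $\dim\n^\bxi\ge\#\eus K(\n)+\dim(\tilde\n/\n)$. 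These three facts do not imply equality: one still needs the matching upper bound $\dim\n^\bxi\le\#\eus K(\n)+\dim(\tilde\n/\n)$ (the partner-counting you explicitly sidestep), or a direct proof that $\bxi\in\n^*_{\sf reg}$. The latter is exactly the content of \cite[2.4]{jos77}, so deferring to it turns the proposal into a citation rather than a proof. Your deduction of the second bullet and of $\bb(\n)=\bb(\tilde\n)$ from the first bullet is correct.
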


\begin{rmk}     \label{rem:more}
The merit of optimisation is that the passage from $\n$ to $\tilde\n$  does not change $\eus K(\n)$ and 
$\bb(\n)$. More generally, if two nilradicals $\n'$ and $\n$ have the same optimisation, then
$\eus K(\n')=\eus K(\n)$ and $\bb(\n')=\bb(\n)$. For instance, this happens if
$\n\subset\n'\subset\tilde\n$.
\end{rmk}
\begin{ex}          \label{ex:sln}
(1) If $\g=\slno$, then $\ut$ is the set of strictly upper-triangular matrices, $\ap_i=\esi_i-\esi_{i+1}$ 
($1\le i\le n$), and $\eus K=\{\beta_1,\dots,\beta_t\}$, where $t=[(n+1)/2]$ and 
$\beta_i=\ap_i+\dots +\ap_{n+1-i}=\esi_i-\esi_{n+2-i}$. Here $(\eus K,\curle)$ is a chain and 
$\Phi(\beta_i)=\{\ap_i,\ap_{n+1-i}\}$.

(2) Take ${\eus T}=\{\ap_2,\ap_6\}$ and the nilradical $\n_{\eus T}\subset \mathfrak{sl}_7$. Then
$\eus K_{\eus T}=\{\beta_1,\beta_2\}$ and therefore $\tilde{\eus T}=\{\ap_1,\ap_2,\ap_5,\ap_6\}$, cf. the 
matrices below.
\begin{center}
\raisebox{8.5ex}{$\n=\n_{\eus T}=$} \ 
\begin{tikzpicture}[scale= .50]
\draw (0,0)  rectangle (7,7);
\draw[dashed,magenta]  (7,0) -- (0,7);
\path[draw,fill=brown!20]  (2,7) -- (7,7) -- (7,1) -- (6,1) -- (6,5)--(2,5)--cycle ;

\path[draw, line width=1pt]  (1.5,6.5)--(6.4,6.5);
\path[draw, line width=1pt]  (6.5,1.5)--(6.5,6.4);

\path[draw, line width=1pt]  (2.5,5.5)--(5.4,5.5);
\path[draw, line width=1pt]  (5.5,2.5)--(5.5,5.4);

\path[draw, line width=1pt]  (3.5,4.5)--(4.4,4.5);
\path[draw, line width=1pt]  (4.5,3.5)--(4.5,4.4);

\foreach \x in {9,11,13}  \shade[ball color=red] (\x/2,\x/2) circle (2mm);
\path[draw,dashed]  (0,6) -- (7,6); 
\path[draw,dashed]  (1,5) -- (7,5); 
\path[draw,dashed]  (2,4) -- (7,4); 
\path[draw,dashed]  (3,3) -- (7,3); 
\path[draw,dashed]  (4,2) -- (7,2); 
\path[draw,dashed]  (5,1) -- (7,1); 

\path[draw,dashed]  (6,0) -- (6,7); 
\path[draw,dashed]  (5,1) -- (5,7); 
\path[draw,dashed]  (4,2) -- (4,7); 
\path[draw,dashed]  (3,3) -- (3,7); 
\path[draw,dashed]  (2,4) -- (2,7); 
\path[draw,dashed]  (1,5) -- (1,7); 
\end{tikzpicture}
\quad \raisebox{8.5ex}{$\mapsto \ \tilde\n= \n_{\tilde{\eus T}}=$} \ 
\begin{tikzpicture}[scale= .50]
\draw (0,0)  rectangle (7,7);
\draw[dashed,magenta]  (7,0) -- (0,7);
\path[draw,fill=brown!20]  (2,6)--(1,6)--(1,7)--(7,7)--(7,1) -- (6,1) -- (6,2)--(5,2)--(5,5)--(2,5)--cycle ;

\foreach \x in {9,11,13}  \shade[ball color=red] (\x/2,\x/2) circle (2mm);
\path[draw,dashed]  (0,6) -- (7,6); 
\path[draw,dashed]  (1,5) -- (7,5); 
\path[draw,dashed]  (2,4) -- (7,4); 
\path[draw,dashed]  (3,3) -- (7,3); 
\path[draw,dashed]  (4,2) -- (7,2); 
\path[draw,dashed]  (5,1) -- (7,1); 

\path[draw,dashed]  (6,0) -- (6,7); 
\path[draw,dashed]  (5,1) -- (5,7); 
\path[draw,dashed]  (4,2) -- (4,7); 
\path[draw,dashed]  (3,3) -- (3,7); 
\path[draw,dashed]  (2,4) -- (2,7); 
\path[draw,dashed]  (1,5) -- (1,7); 
\draw (1.5,6.4)  node {\footnotesize $\ap_1$} ;
\draw (2.5,5.4)  node {\footnotesize $\ap_2$} ;
\draw (5.5,2.4)  node {\footnotesize $\ap_5$} ;
\draw (6.5,1.4)  node {\footnotesize $\ap_6$} ;
\end{tikzpicture}
\end{center}

\noindent
The cells with ball represent the cascade and the thick lines depict the Heisenberg subset attached to an 
element of the cascade. By Proposition~\ref{prop:jos}, we have $\ind\tilde\n=2$, $\ind\n=6$,  and
$\bb(\n)=\bb(\tilde\n)=10$. 
\end{ex}

\begin{ex}       \label{ex:spn}
For a square matrix $A$, let $\hat A$ denote its transpose with respect to the antidiagonal.
Choose the skew-symmetric form defining $\g=\spn\subset\sltn$ such that
\[
      \spn=\left\{  \begin{pmatrix} A & M \\ M' & -\hat A\end{pmatrix}\mid M=\hat M \ \& \ M'=\hat M'\right\} ,
\]
where $A,M,M'$ are $n\times n$ matrices. Then $\ut$ (resp. $\te$) is the set of symplectic strictly upper 
triangular (resp. diagonal) matrices. Hence 
$\te=\{{\sf diag}(\esi_1,\dots,\esi_n,-\esi_n,\dots,-\esi_1)\mid \esi_i\in\BC \}$.  Recall that
$\ap_i=\esi_i-\esi_{i+1}$ ($i<n$) and $\ap_n=2\esi_n$. Then $\eus K=\{\beta_1,\dots,\beta_n\}$ is a
chain, where $\beta_i=2\esi_i$ and $\Phi(\beta_i)=\{\ap_i\}$ for all $i$. Here 
$\napn=\{\begin{pmatrix} 0 & M \\ 0 & 0\end{pmatrix}\mid M=\hat M \}$ is the nilradical of the maximal 
parabolic subalgebra with ${\eus T}=\{\ap_n\}$. It is the only (standard) {\bf abelian} nilradical in $\spn$ 
and $\eus K\subset \Delta(\napn)$ corresponds to the antidiagonal entries of $M$.
\end{ex}

%%%%%%%%%%%%  Section 3    %%%%%%%%
\section{Generic stabilisers, the Frobenius semiradical, and commutative polarisations}
\label{sect:on-CP,-sgp-Frob_semirad}

Let $Q$ be a connected algebraic group with $\q=\Lie Q$. If $\rho:Q\to GL(\BV)$ is a representation of
$Q$, then the corresponding $Q$-action on $\BV$ is denoted by $(Q:\BV)$. For $q\in Q$ and $v\in\BV$, 
we write $q{\cdot}v$ in place of $\rho(q)v$. Likewise, $(\q:\BV)$ corresponds to $d\rho:\q\to \glv$.

\subsection{Generic stabilisers}   
\label{subs:sgp}
Let $(Q:\BV)$ be a linear action. We say that $v\in \BV$ is $Q$-{\it generic}, if there is a dense open 
subset $\Omega\subset \BV$ such that  $v\in\Omega$ and the stabiliser $\q^x$ is $Q$-conjugate to 
$\q^v$ for any $x\in\Omega$. Then any $\q^x$ ($x\in\Omega)$ is called a $Q$-{\it generic 
stabiliser\/} for the representation $(\q:\BV)$, and we say that $(\q: \BV)$ has a $Q$-generic stabiliser. 
(One can consider similar notions for non-connected groups, for arbitrary actions of $Q$, and for 
stationary subgroups $Q^x\subset Q$, but we do need it now.) By semi-continuity of orbit dimensions, the 
set $Q$-generic points is contained in the set of $Q$-regular points
\beq                \label{eq:reg-elm} 
    \BV_{\sf reg}=\{v\in\BV\mid \dim Q{\cdot}v \ \text{is maximal}\},
\eeq
but usually, this inclusion is proper. By a result of R.W.\,Richardson~\cite{r72}, if $Q$ is reductive, then 
$Q$-generic stabilisers exist for any action of $Q$ on a smooth affine variety. But this is no longer true for 
non-reductive groups, and one of our goals is to study (the presence of) generic stabilisers for the coadjoint representation of a nilradical in $\g$.

A practical method for proving the existence of $Q$-generic points and finding $Q$-generic stabilisers 
is given by A.G.\,Elashvili~\cite[Lemma\,1]{ag72}. Let $\mathsf{T}_v(Q{\cdot}v)=\q{\cdot}v$ be the 
tangent space of the orbit $Q{\cdot}v$ at $v$ and $\BV^{\q_v}$ the fixed point subspace of $\q^v$ in 
$\BV$. Then
\beq    \label{eq:alela}
   \text{$v\in \BV$ is $Q$-generic if and only if \ $\BV=\q{\cdot}v+\BV^{\q_v}$.}
\eeq
The main case of interest for us is the coadjoint representation of $Q$, when $\BV=\q^*$. For the 
coadjoint representation, we usually skip '$Q$' from notation and refer to ``generic" and ``regular" points 
(in $\q^*$) and ``generic" stabilisers (in $\q$). 
Translating Elashvili's criterion~\eqref{eq:alela} into the setting of coadjoint representations and taking 
annihilators, one obtains the following nice formula, see~\cite[Cor.\,1.8(i)]{tayu1}.  Given $\xi\in\q^*$, the 
stabiliser $\q^\xi$ is generic (i.e., $\xi$ is a $Q$-generic point) if and only if
\beq    \label{eq:krit-sgp}
      [\q,\q^\xi]\cap \q^\xi =\{0\} .
\eeq
The reason is that $(\q{\cdot}\xi)^\perp=\q^\xi$ and $((\q^*)^{\q_\xi})^\perp=[\q,\q^\xi]$, where 
$(\cdot)^\perp$ stands for the annihilator in the dual space.

\subsection{The Frobenius semiradical}    
\label{subs:frob-semi}
For the $Q$-module $\BV=\q^*$, the set of $Q$-regular (or just "regular") points $\q^*_{\sf reg}$ consists 
of all $\xi\in\q^*$ such that the stabiliser $\q^\xi$ has the minimal possible dimension. 
If $\xi\in \q^*_{\sf reg}$, then $\ind\q:=\dim \q^\xi$ is the {\it index\/} of (a Lie algebra) $\q$.
The {\it Frobenius semiradical\/} $\cF(\q)$ of a Lie algebra $\q$ is introduced by A.\,Ooms, see~\cite{ooms,ooms2}. By definition, 
\[
          \cF(\q)=\sum_{\xi\in \q^*_{\sf reg}}\q^\xi .
\]
Obviously, $\cF(\q)$ is a characteristic ideal of $\q$, and $\cF(\q)=0$ if and only $\ind\q=0$ (i.e., $\q$ is 
a {\it Frobenius\/} Lie algebra). Note that if $(\q:\q^*)$ has a generic stabiliser, then any $Q$-generic point in the sense of Section~\ref{subs:sgp} is regular, but not vice versa.

\begin{lm}[{cf.~\cite[Prop.\,1.7]{ooms2}}]       \label{lm:sgp&F}
If\/ $(\q:\q^*)$ has a generic stabiliser and $\xi\in\q^*$ is any generic point, then
$\cF(\q)$ is the $\q$-ideal generated by the sole stabiliser $\q^\xi$.
\end{lm}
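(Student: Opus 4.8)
The claim is that if $(\q:\q^*)$ has a generic stabiliser and $\xi\in\q^*$ is any generic point, then $\cF(\q)$ equals the $\q$-ideal generated by $\q^\xi$. Write $\eus I(\xi)$ for the $\q$-ideal generated by $\q^\xi$. Since generic points are regular, $\q^\xi\subset\cF(\q)$, and as $\cF(\q)$ is an ideal, $\eus I(\xi)\subset\cF(\q)$; the content of the lemma is the reverse inclusion $\cF(\q)\subset\eus I(\xi)$. So the first step is to reduce to showing $\q^\eta\subset\eus I(\xi)$ for \emph{every} regular point $\eta\in\q^*_{\sf reg}$.

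\textbf{Key steps.} The main point is that the stabiliser is ``locally constant up to conjugation'' on the generic locus, but \emph{globally constant} in the sense that every regular stabiliser is $Q$-conjugate to $\q^\xi$ along the generic set $\Omega$, and on the rest of $\q^*_{\sf reg}$ one must argue by degeneration. I would proceed as follows. First, let $\Omega\subset\q^*$ be the dense open set of $Q$-generic points furnished by the hypothesis, so that for $\eta\in\Omega$ we have $\q^\eta=\Ad^*(q)\,\q^\xi$ for some $q\in Q$; since $\eus I(\xi)$ is a characteristic ideal (hence $\Ad^*(Q)$-stable), this gives $\q^\eta=\Ad^*(q)\q^\xi\subset\Ad^*(q)\eus I(\xi)=\eus I(\xi)$, so the desired inclusion holds on all of $\Omega$. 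Second, consider the ``big'' generic stabiliser bundle: on $\Omega$ the assignment $\eta\mapsto\q^\eta$ gives a morphism to the Grassmannian $\mathrm{Gr}(\ind\q,\q)$ whose image lies in the single $\Ad^*(Q)$-orbit of $[\q^\xi]$, and in particular $\q^\eta\subset\eus I(\xi)$ for all $\eta\in\Omega$. Third, to handle a regular point $\eta\in\q^*_{\sf reg}\setminus\Omega$: the map $\q^*_{\sf reg}\to\mathrm{Gr}(\ind\q,\q)$, $\eta\mapsto\q^\eta$, is still a morphism (the stabiliser dimension is constant $=\ind\q$ on $\q^*_{\sf reg}$, so the kernel of $\ad^*:\q\to\q^*$ has constant rank and the assignment is regular), and the closed subvariety $Z=\{W\in\mathrm{Gr}(\ind\q,\q)\mid W\subset\eus I(\xi)\}$ contains the image of the dense subset $\Omega$; since $\q^*_{\sf reg}$ is irreducible (it is a dense open subset of the vector space $\q^*$) and the preimage of $Z$ is closed and contains $\Omega$, it is all of $\q^*_{\sf reg}$. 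Hence $\q^\eta\subset\eus I(\xi)$ for every regular $\eta$, and summing over $\eta\in\q^*_{\sf reg}$ yields $\cF(\q)\subset\eus I(\xi)$. Combined with the first paragraph, $\cF(\q)=\eus I(\xi)$.

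\textbf{Main obstacle.} The delicate point is the regularity (as a morphism of varieties) of the map $\eta\mapsto\q^\eta$ on the \emph{whole} regular locus $\q^*_{\sf reg}$, not merely on $\Omega$: one must observe that on $\q^*_{\sf reg}$ the linear map $\ad^*_\eta:\q\to\q^*$, $x\mapsto\ad^*(x)\eta$, has constant rank $\dim\q-\ind\q$, so its kernel $\q^\eta$ varies algebraically, giving a morphism to $\mathrm{Gr}(\ind\q,\q)$; then one uses that $Z$ is closed and that its preimage, being closed and containing the dense set $\Omega$, must be everything. An alternative, more elementary route avoiding the Grassmannian is to pick $\eta\in\q^*_{\sf reg}$ and note that $\xi+t(\eta-\xi)$ lies in $\Omega$ for all but finitely many $t\in\BC$ (since $\Omega$ is open dense and a line meets it in a cofinite set, \emph{provided} the line is not contained in the complement, which one arranges by moving $\xi$ slightly within $\Omega$); taking a limit $t\to$ the excluded value and using upper semicontinuity of the fibre dimension of the total stabiliser scheme over this line, one again gets $\q^\eta\subset\eus I(\xi)$. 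I would present the Grassmannian argument as the clean version, since Ooms' original proof in \cite[Prop.\,1.7]{ooms2} is essentially of this flavour.
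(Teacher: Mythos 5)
Your argument is correct, but it is more self-contained than the paper's: you essentially reprove the key auxiliary fact, whereas the paper simply cites it. Concretely, the paper invokes Ooms' Lemma~1.2 of \cite{ooms2}, which states that $\cF(\q)=\sum_{\eta\in\Psi}\q^\eta$ for \emph{any} open dense subset $\Psi\subset\q^*_{\sf reg}$; applying this to $\Psi=\Omega$ (the generic locus) and using that every $\q^\eta$ with $\eta\in\Omega$ equals $\Ad(g)\,\q^\xi$ for some $g\in Q$, the paper concludes in two lines that $\cF(\q)=\sum_{g\in Q}\Ad(g)\,\q^\xi$, which is the ideal generated by $\q^\xi$ (as $Q$ is connected, any $\Ad(Q)$-stable subspace is an ideal). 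Your Grassmannian degeneration argument --- that $\eta\mapsto\q^\eta$ is a morphism $\q^*_{\sf reg}\to\mathrm{Gr}(\ind\q,\q)$, the locus $Z=\{W\mid W\subset\eus I(\xi)\}$ is closed, and the preimage of $Z$ contains the dense open set $\Omega$, hence is all of $\q^*_{\sf reg}$ --- is precisely a proof of the statement the paper outsources to \cite{ooms2}. Both routes are legitimate: yours buys independence from the reference at the cost of extra length; the paper buys brevity.

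Two small inaccuracies in your write-up, neither affecting correctness: the conjugation you need is $\Ad(q)$ acting on subspaces of $\q$, not $\Ad^*(q)$ (which acts on $\q^*$); and you do not need $\eus I(\xi)$ to be a \emph{characteristic} ideal --- since $Q$ is connected, any ideal of $\q$ is automatically $\Ad(Q)$-stable, which is all that is used.
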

\begin{proof}
By~\cite[Lemma\,1.2]{ooms2}, if $\Psi$ is open and dense in $\q^*_{\sf reg}$, then
$\cF(\q)=\sum_{\eta\in \Psi}\q^\eta$. Applying this to the set of generic points 
$\Omega\subset\q^*_{\sf reg}$, we obtain 
$\cF(\q)=\sum_{\eta\in \Omega}\q^\eta=\sum_{g\in Q}\q^{g{\cdot}\xi}$. Clearly, the last 
sum yields the ideal of $\q$ generated by $\q^\xi$.
\end{proof}

If $\q$ is {\it quadratic\/}, 
i.e., $\q\simeq\q^*$ as $Q$-module, then $\cF(\q)=\sum_{x\in \q_{\sf reg}}\q^x$. Since $x\in\q^x$ for any
$x\in\q$, we see that here $\cF(\q)=\q$ (cf.~\cite[Theorem\,3.2]{ag03}). In 
particular, this is the case if $\q$ is reductive. Following A.\,Ooms, $\q$ is said to be {\it quasi-quadratic\/} if $\cF(\q)=\q$. Another interesting property of the functor $\cF(\cdot)$ is that
$\ind\q\le \ind\cF(\q)$ and $\cF(\cF(\q))=\cF(\q)$~\cite{ooms2}.

\subsection{Commutative polarisations}
\label{subs:CP}
If $\ah$ is an abelian subalgebra of $\q$, then $\dim\ah\le \bb(\q)$, see~\cite[0.2]{vi90} 
or~\cite[Theorem\,14]{ooms}. If $\dim\ah= \bb(\q)$, then $\ah$ is called a {\it commutative polarisation\/} 
(=\CP) of $\q$, and we say that $\q$ has a \CP. If $\ah$ is a \CP\ and also an ideal of $\q$, then it is 
called a \CP-{\it ideal}. If $\q$ is solvable and has a \CP, then it also has a \CP-ideal, 
see~\cite[Theorem\,4.1]{ag03}. More generally, a similar argument shows that if $\q$ is an ideal of a solvable Lie algebra $\rr$ and $\q$ has a \CP, then $\q$ has a \CP-ideal that is $\rr$-stable.
A standard nilradical $\n$ is an ideal of $\be$. Therefore, if $\n$ has a \CP, then it also has a \CP-ideal 
that is $\be$-stable. Henceforth, "a \CP-ideal of $\n$" means "a $\be$-stable \CP-ideal of $\n$".

Basic results on commutative polarisations are presented in~\cite{ag03}. It is also shown therein that if 
$\g$ is of type $\GR{A}{n}$ or $\GR{C}{n}$, then every nilradical in $\g$ has a \CP. A complete 
classification of the nilradicals having a \CP\ is obtained in~\cite{CP}. By Lemma~\ref{lm:easy-Z-grad}, 
$\n_{\eus T}$ is abelian if and only if ${\eus T}=\{\ap\}$ and $[\theta:\ap]=1$. 
The abelian nilradical $\nap$ play a key role in our theory.  By Theorems~3.10 
and 4.1 in~\cite{CP}, a nilradical $\n$ has a \CP\ if and only if at least one of the following two conditions 
is satisfied:
\begin{enumerate}
\item  $\n=\n_\theta=\h_1$ is the {Heisenberg nilradical}. In this case, if $\ah$ 
is any maximal abelian ideal of $\be$, then $\ah\cap\n$ is a \CP-ideal of $\n$, and vice versa.
\item  there is an abelian nilradical $\nap$ such that $\n$ is contained in $\widetilde{\nap}$, the 
optimisation of  $\nap$. (There can be several abelian nilradicals with this property, and, for a ``right'' 
choice of such $\check\ap\in\Pi$,  $\n\cap\n_{\{\check\ap\}}$ is a \CP-ideal of $\n$, cf. also 
Section~\ref{sect:EQ-i-module}).
\end{enumerate}

\noindent
If $\g$ has no parabolic subalgebras with abelian nilradicals, then the Heisenberg nilradical 
$\n_\theta=\h_1$ is the only nilradical with \CP. This happens precisely if $\g$ is of type $\GR{G}{2}$, 
$\GR{F}{4}$, $\GR{E}{8}$. 
Another result of \cite{CP} is that $\n$ has a \CP\ if and only if $\tilde\n$ has. 

\subsection{The role of commutative polarisations}
\label{subs:role}
If $\ah$ is a \CP\ of a Lie algebra $\q$, then 
\begin{enumerate}
\item \ $\cF(\q)\subset \ah$~\cite[Prop.\,20]{ooms} and thereby $\cF(\q)$ is an abelian ideal. (However, it can happen that $\cF(\q)$ is abelian, whereas $\q$ has no CP, see Example~\ref{ex:metabelian}.)
\item \ Since $\ah$ is abelian, $\bb(\ah)=\dim\ah=\bb(\q)$. Therefore,
the Poisson centre $\gZ(\q)=\gS(\q)^\q$ is contained in $\gS(\ah)$~\cite[Prop.\,5.5]{CP}. Hence, if $\ah$ is a \CP-ideal of $\q$, then $\gS(\q)^\q=\gS(\ah)^\q$.
\item \ Thus, if $\ah_1,\dots,\ah_s$ are different \CP\ in $\q$, then 
$\cF(\q)\subset \bigcap_{i=1}^s \ah_i$ and $\gS(\q)^\q\subset \gS(\bigcap_{i=1}^s \ah_i)$. 
\end{enumerate}

In many cases, the presence of a \CP\ in $\n$ allows us to quickly describe the Frobenius semiradical 
for $\n$, see Sections~\ref{subs:frob-semi-sl} and \ref{subs:frob-semi-sp}. For the reader's convenience, 
we provide an invariant-theoretic proof for two basic results on abelian subalgebras mentioned above.

\begin{prop}             \label{prop:IT-proofs}
Let $\ah$ be an abelian subalgebra of $\q$. Then
\begin{enumerate}
\item \ $\dim\ah\le \bb(\q)$;
\item \ if\/ $\dim\ah=\bb(\q)$, then $\q^\xi\subset \ah$ for any $\xi\in \q^*_{\sf reg}$, i.e., $\cF(\q)\subset\ah$.
\end{enumerate}
\end{prop}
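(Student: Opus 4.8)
The plan is to use Rosenlicht's theorem to connect the dimension $\dim\ah$ of an abelian subalgebra with transcendence degree of invariants. The key observation is that if $\ah\subset\q$ is abelian, then the restriction map gives a surjection $\q^*\to\ah^*$, and the $\ah$-action on $\q^*$ (the restriction of the coadjoint action of $\q$) has the property that along a generic fibre of $\q^*\to\ah^*$ the orbits of $\ah$ have dimension controlled by the pairing between $\ah$ and $[\q,\q]$. More precisely, I would consider the generic $\ah$-orbit dimension in $\q^*$: for $\xi\in\q^*$, the stabiliser of $\xi$ under $\ah$ is $\ah\cap\q^\xi$, so $\dim(\ah{\cdot}\xi)=\dim\ah-\dim(\ah\cap\q^\xi)$.

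\smallskip
\textbf{Part (1).} First I would compute, for generic $\xi$, the dimension of the $\ah$-orbit $\ah{\cdot}\xi\subset\q^*$. Since $\ah$ is abelian, $\ah{\cdot}\xi$ is an isotropic subspace of the tangent space at $\xi$ with respect to the Kirillov form $\omega_\xi(x,y)=\xi([x,y])$: indeed for $x,y\in\ah$ we have $\omega_\xi(x,y)=\xi([x,y])=0$. The Kirillov form on $\q{\cdot}\xi=\mathsf{T}_\xi(Q{\cdot}\xi)$ is nondegenerate (its radical is $\q^\xi$), so an isotropic subspace has dimension at most $\tfrac12\dim(\q{\cdot}\xi)=\tfrac12(\dim\q-\ind\q)=\bb(\q)-\ind\q$ for $\xi$ regular. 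Hence $\dim\ah-\dim(\ah\cap\q^\xi)\le \bb(\q)-\ind\q$ for generic $\xi$. To finish I need the reverse estimate on the ``constant part'': by Rosenlicht's theorem applied to the action $(\ah:\q^*)$, the generic orbit has codimension $\dim\q-(\dim\ah-\dim(\ah\cap\q^\xi)_{\mathrm{gen}})$ equal to $\trdeg\BC(\q^*)^{\ah}$; but $\BC(\q^*)^{\ah}$ contains $\BC(\q^*)^{\q}$, which has transcendence degree $\dim\q-\dim(\q{\cdot}\xi)=\dim\q-2\bb(\q)+2\ind\q$... Actually the cleaner route: the map $\q^*\to\ah^*$ has image of dimension $\dim\ah$, and a generic fibre $F$ is a translate of $\ah^\perp=(\q/[\text{something}])$; I would argue that $\ah$ acts on $\q^*$ preserving the fibres of $\q^*\to\ah^*$ (since $\ah$ abelian means $\ad^*(\ah)\ah^*\to$ contributes nothing to the $\ah^*$-coordinate) and that the generic $\ah$-orbit inside a fibre, together with the base $\ah^*$, accounts for $\dim(\ah{\cdot}\xi)+\dim\ah$ independent directions. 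Combining with the isotropy bound $\dim(\ah{\cdot}\xi)\le\bb(\q)-\ind\q$ and a dimension count $\dim\ah\le \dim(\ah{\cdot}\xi) + (\text{number of }\ah\text{-invariants restricted to a fibre})$ should yield $\dim\ah\le\bb(\q)$. I would phrase the invariant count via Rosenlicht directly on $(\ah:\q^*)$: $\dim\q = \dim(\text{generic }\ah\text{-orbit}) + \trdeg\BC(\q^*)^{\ah}$, and then bound $\trdeg\BC(\q^*)^{\ah}$ from above; since $\BC(\q^*)^\q\subset\BC(\q^*)^\ah$ and any element of $\ah$ gives a linear (hence nonconstant, if $\ah\ne 0$) semi-invariant-type function, one gets $\trdeg\BC(\q^*)^\ah \le \dim\q - \dim\ah - \dim(\ah{\cdot}\xi)_{\mathrm{gen}} + (\text{correction})$.

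\smallskip
\textbf{Part (2).} Now suppose $\dim\ah=\bb(\q)$, so equality holds throughout. Equality in the isotropy bound forces $\ah{\cdot}\xi$ to be a \emph{maximal} isotropic (Lagrangian) subspace of $\q{\cdot}\xi$ for generic regular $\xi$, of dimension exactly $\bb(\q)-\ind\q$, and this forces $\dim(\ah\cap\q^\xi)=\dim\ah-(\bb(\q)-\ind\q)=\ind\q=\dim\q^\xi$. Therefore $\ah\cap\q^\xi=\q^\xi$, i.e. $\q^\xi\subset\ah$, for generic regular $\xi$. Since $\cF(\q)=\sum_{\xi\in\q^*_{\sf reg}}\q^\xi$ and by \cite[Lemma\,1.2]{ooms2} this sum may be taken over any dense open subset of $\q^*_{\sf reg}$, we conclude $\cF(\q)\subset\ah$.

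\smallskip
\textbf{Main obstacle.} The delicate point is part (1): turning the elementary isotropy bound $\dim(\ah{\cdot}\xi)\le\bb(\q)-\ind\q$ into the inequality $\dim\ah\le\bb(\q)$ requires controlling $\dim(\ah\cap\q^\xi)$ for generic $\xi$ from below — equivalently, showing the ``stabiliser jump'' of $\ah$ on $\q^*$ is at least $\ind\q$ — which is exactly where Rosenlicht's theorem (existence of a geometric quotient on a dense open set, with rational invariants separating orbits) must be invoked to identify $\dim\q - \dim(\ah{\cdot}\xi)_{\mathrm{gen}}$ with $\trdeg\BC(\q^*)^\ah$ and then to bound the latter using that $\ah$ itself, sitting inside $\gS(\q)=\BC[\q^*]$, provides $\dim\ah$ algebraically independent \emph{functions} whose Poisson brackets vanish (so $\ah$ is Poisson-commutative in $\gS(\q)$), giving $\dim\ah=\trdeg\BC(\ah)\le\bb(\q)$ by Vinberg's inequality \cite[0.2]{vi90} quoted earlier. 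In fact this last remark short-circuits part (1) entirely: an abelian subalgebra $\ah\subset\q$ spans a Poisson-commutative subspace of $\gS(\q)$ consisting of degree-one elements, these are algebraically independent, hence $\dim\ah\le\bb(\q)$ — so the only real content to write out carefully is the Lagrangian/equality analysis in part (2), for which the Kirillov-form argument above is complete modulo checking that ``generic'' can be taken inside $\q^*_{\sf reg}$ and that equality in Vinberg's bound propagates to equality in the isotropy bound fibrewise.
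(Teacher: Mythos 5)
Your proposal starts from the same Rosenlicht toolkit as the paper, but the two halves fare quite differently.

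For part (2), your Kirillov-form argument is correct and is a genuine alternative to what the paper does. The paper derives $\q^\xi\subset\ah$ by forcing all the inequalities in the Rosenlicht identity $\trdeg\BC(\q^*)^A=\dim\q-\dim\ah+\min_\xi\dim\ah^\xi$ to become equalities, so that $\min_\xi\dim\ah^\xi=\ind\q$ and hence $\ah^\xi=\q^\xi$ for regular $\xi$. You instead use that the image of $\ah$ in $\q/\q^\xi$ is isotropic for the Kirillov form, so $\dim(\ah{\cdot}\xi)\le\bb(\q)-\ind\q$, and combined with the decomposition $\dim\ah=\dim(\ah{\cdot}\xi)+\dim(\ah\cap\q^\xi)$ and $\ah\cap\q^\xi\subset\q^\xi$ the hypothesis $\dim\ah=\bb(\q)$ forces $\ah\cap\q^\xi=\q^\xi$. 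Both routes work, and yours is arguably more symplectic in flavour; note also that you do not need Ooms's Lemma at the end, since your argument gives $\q^\xi\subset\ah$ for \emph{every} regular $\xi$, not merely a dense set of them.

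For part (1) there is a real gap, and you half-acknowledge it. Your isotropy bound gives $\dim\ah-\dim(\ah\cap\q^\xi)\le\bb(\q)-\ind\q$; you then announce that a ``reverse estimate on the constant part'' is needed and drift through an inconclusive Rosenlicht computation before falling back on Vinberg's inequality. Two remarks. First, the missing step is not a reverse estimate at all: for regular $\xi$ one has $\ah\cap\q^\xi\subset\q^\xi$ and $\dim\q^\xi=\ind\q$, whence $\dim(\ah\cap\q^\xi)\le\ind\q$, and adding this to your isotropy bound gives $\dim\ah\le\bb(\q)$ at once. That trivial upper bound is exactly the observation that also closes the paper's own Rosenlicht computation (in the guise $\min_\xi\dim\ah^\xi\le\ind\q$), so your approach is one line away from being complete. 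Second, the fallback to Vinberg's inequality is not an admissible shortcut here: the inequality $\dim\ah\le\bb(\q)$ \emph{is} the abelian-subalgebra special case of that result, and the whole point of the proposition, as the paper says explicitly, is to give a self-contained invariant-theoretic re-proof of it rather than to cite it. The paper's proof of (1) goes via the inclusion $\gS(\ah)\subset\gS(\q)^A$ (valid because $\ah$ is abelian and $A=\exp\ah$), which gives $\dim\ah\le\trdeg\BC(\q^*)^A$, then Rosenlicht's theorem and the trivial bound on stabiliser dimension. If you instead complete your isotropy argument as above, you obtain a different but equally self-contained proof.
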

\begin{proof}
We show that both assertions are immediate consequences of Rosenlicht's 
theorem~\cite[Chap.\,1.6]{brion}. Let $A\subset Q$ be the connected (abelian) subgroup with $\Lie A=\ah$.
For the $A$-action on $\q^*$, let $\BC(\q^*)^A$ denote the field of $A$-invariant rational functions on 
$\q^*$. 

(1) Since $\ah$ is abelian, we have $\BC[\q^*]^A=\gS(\q)^A\supset \gS(\ah)$, hence 
$\trdeg\BC(\q^*)^A\ge\dim\ah$. By Rosenlicht's theorem,
\[
  \trdeg\BC(\q^*)^A=\dim\q-\max_{\xi\in\q^*}A{\cdot}\xi=
  \dim\q-\dim\ah+\min_{\xi\in\q^*}\dim\ah^\xi .
\]
Therefore,
\[
   2\dim\ah\le \dim\q+\min_{\xi\in\q^*}\dim\ah^\xi\le \dim\q+\ind\q=2\bb(\q) .
\]
\indent
(2) If $\dim\ah=\bb(\q)$, then $\min_{\xi\in\q^*}\dim\ah^\xi=\ind\q$. For $\xi\in\q^*_{\sf reg}$, one has
$\dim\q^\xi=\ind\q$. That is, $\ah^\xi=\q^\xi\subset\ah$ for any $\xi\in\q^*_{\sf reg}$.
\end{proof}
The proof above also implies that if $\dim\ah=\bb(\q)$, then $\trdeg\BC(\q^*)^A=\dim\ah$. Therefore 
$\BC[\q^*]^A=\gS(\ah)$ and $\BC(\q^*)^A$ is the fraction field of $\gS(\ah)$.

%%%%%%%%%%%%  Section 4    %%%%%%%%
\section{Generic stabilisers and the Frobenius semiradical for the nilradicals}     
\label{sect:sgp-nil}

\noindent
In this section, we study the Frobenius semiradical, $\cF(\n)$, of a nilradical $\n\subset\g$ and the 
existence of generic stabilisers for the coadjoint representation of $N=\exp(\n)$. We also say that 
``$\n$ has a generic stabiliser'', if the coadjoint representation $(N:\n^*)$  has. 

Let $\n=\p^{\sf nil}$ be a standard nilradical and $\n^-=(\p^-)^{\sf nil}$  the opposite nilradical, 
i.e., $\Delta(\n^-)=-\Delta(\n)$. Using the vector space sum $\g=\p\oplus\n^-$ and the 
$P$-module isomorphism $\n^*\simeq \g/\p$, we identify $\n^*$ with $\n^-$ and thereby regard $\n^-$ as 
$P$-module. In terms of $\n^-$, the $\p$-action on $\n^*$ is given by the Lie bracket in $\g$, with the 
subsequent projection to $\n^-$. In particular, the coadjoint representation $\ads_\n$ of $\n$ has the 
following interpretation. If $x\in\n$ and $\xi\in\n^-$, then $(\ads_\n x){\cdot}\xi=\mathsf{pr}_{\n^-} ([x,\xi])$, 
where $\mathsf{pr}_{\n^-}: \g\to\n^-$ is the projection with kernel $\p$. 

Recall that $\eus K(\n)=\eus K\cap \Delta(\n)$. Set 
$\ce=\bigoplus_{\beta\in\eus K(\n)}\g_{-\beta}\subset \n^-\simeq\n^*$ and
$\bxi=\sum_{\beta\in\eus K(\n)}e_{-\beta}\in\ce$. We say that $\ce$ is a {\it cascade subspace} of $\n^*$ 
and $\bxi$ is a {\it cascade point} of $\n^*$. Clearly, $\bxi$ depends on the choice of nonzero root vectors
$e_{-\beta}\in\g_{-\beta}$, but all such points $\bxi$ form a sole dense $T$-orbit in $\ce$, which is 
denoted by $\ce_0$. That is, 
$\ce_0=\{\sum_{\beta\in\eus K(\n)} a_\beta e_{-\beta} \mid a_\beta\in\BC\setminus\{0\}\}$.
It was proved by A.\,Joseph~\cite[2.4]{jos77} that upon the identification of $\n^-$ and 
$\n^*$ as $P$-modules and hence $B$-modules, $B{\cdot}\bxi$ is dense in $\n^*$ and 
$B{\cdot}\bxi\subset\n^*_{\sf reg}$. In particular, $\ce_0\subset\n^*_{\sf reg}$.

\begin{prop}    \label{thm:F-&-sgp}
Let\/ $\n$ be an arbitrary nilradical and $\bxi\in\n^*$ a cascade point. Then
\begin{itemize}
\item[\sf(i)] \ $\cF(\n)$ is the $\be$-stable ideal of\/ $\n$ generated by $\n^\bxi$;
\item[\sf(ii)] \ $(\n:\n^*)$ has a generic stabiliser if and only if the stabiliser $\n^\bxi$ is generic.
\end{itemize}
\end{prop}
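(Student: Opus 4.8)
The plan is to exploit two facts established earlier: first, by Joseph's result the $B$-orbit $B{\cdot}\bxi$ is dense in $\n^*$ and contained in $\n^*_{\sf reg}$; second, by Ooms' lemma \cite[Lemma\,1.2]{ooms2}, $\cF(\n)=\sum_{\eta\in\Psi}\n^\eta$ for \emph{any} dense subset $\Psi$ of $\n^*_{\sf reg}$. For part \textsf{(i)}, I would argue as follows. Since $\ce_0\subset B{\cdot}\bxi$ is dense in $\n^*$ and lies in $\n^*_{\sf reg}$, we may take $\Psi=B{\cdot}\bxi$, so that $\cF(\n)=\sum_{b\in B}\n^{b{\cdot}\bxi}$. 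For $b\in B$ one has $\n^{b{\cdot}\bxi}=b{\cdot}\n^\bxi$ (here $b{\cdot}$ denotes $\Ad b$; note $\n$ is $B$-stable, so $\Ad b$ preserves $\n$ and conjugates stabilisers). Hence $\cF(\n)=\sum_{b\in B}\Ad(b)\,\n^\bxi$, which is exactly the smallest $\Ad(B)$-stable — equivalently $\be$-stable, since $B$ is connected — subspace of $\n$ containing $\n^\bxi$, i.e. the $\be$-stable ideal of $\n$ generated by $\n^\bxi$. (That this subspace is an ideal of $\n$ is automatic: $\cF(\n)$ is always a characteristic ideal of $\n$, or alternatively $\be$-stability of a subspace of $\n$ forces $\n$-stability since $\n\subset\be$.)

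For part \textsf{(ii)}, the ``only if'' direction is essentially immediate: if $(\n:\n^*)$ has a generic stabiliser, the generic locus $\Omega$ is dense and open in $\n^*$, hence meets the dense $B$-orbit $B{\cdot}\bxi$; since $\Omega$ is also $N$-stable (in fact conjugation-invariant under $N$) and $B{\cdot}\bxi=(TN){\cdot}\bxi$ while $T$ normalises $N$ and preserves $\ce_0$, one checks $\bxi\in\Omega$ itself, so $\n^\bxi$ is a generic stabiliser. More carefully: $\Omega$ dense open implies $\Omega\cap\ce_0\ne\varnothing$; pick $\eta=\sum a_\beta e_{-\beta}\in\Omega\cap\ce_0$; there is $t\in T$ with $t{\cdot}\eta=\bxi$ (rescaling the coordinates), and applying \eqref{eq:krit-sgp} — the criterion $[\n,\n^\eta]\cap\n^\eta=\{0\}$, which is $\Ad(t)$-equivariant — transports genericity of $\eta$ to genericity of $\bxi$. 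For the ``if'' direction, suppose $\n^\bxi$ is generic, i.e. $[\n,\n^\bxi]\cap\n^\bxi=\{0\}$. The $T$-equivariance of this condition gives that every point of $\ce_0$ is generic as well; thus $\ce_0$ is a dense subset of $\n^*$ consisting of generic points, and one must show this forces the existence of an \emph{open} dense generic locus. Here I would invoke Elashvili's criterion \eqref{eq:alela}: genericity of $v$ is equivalent to $\n^* = \n{\cdot}v + (\n^*)^{\n_v}$, and the function $v\mapsto \dim\bigl(\n{\cdot}v + (\n^*)^{\n_v}\bigr)$ is lower semicontinuous on $\n^*_{\sf reg}$ (on the regular locus $\dim\n^\xi$ is constant, and both summands vary semicontinuously), so the set where it equals $\dim\n^*$ is open; it is nonempty because it contains $\ce_0$, hence dense.

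The step I expect to be the main obstacle is the last one: carefully justifying that $\ce_0$ being generic propagates to an honest dense \emph{open} generic locus, i.e. that the set of $Q$-generic points is open when it is nonempty and one already knows a dense subset of it. The cleanest route is via \eqref{eq:alela}/\eqref{eq:krit-sgp}: on $\n^*_{\sf reg}$, where $\dim\n^\xi\equiv\ind\n$, the condition $[\n,\n^\xi]\cap\n^\xi=\{0\}$ is equivalent to $\dim[\n,\n^\xi]=\dim\n-2\ind\n+\dim(\text{something})$... — more simply, the pairing-rank argument: $[\n,\n^\xi]\cap\n^\xi=\{0\}$ is an open condition on $\n^*_{\sf reg}$ because it says a certain alternating form (the Kirillov form restricted appropriately) is nondegenerate, equivalently a determinant is nonzero, cut out by a non-identically-vanishing polynomial since $\ce_0$ witnesses a nonvanishing point. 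Once openness is in hand, density is free and the stabilisers over the generic locus are all $N$-conjugate to $\n^\bxi$ by the usual Richardson-type argument for the (unipotent) group $N$ acting on its own coadjoint orbits within the regular locus.
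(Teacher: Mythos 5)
Your proof of part \textsf{(i)} is correct and is essentially the paper's argument: apply Ooms' Lemma 1.2 to the dense subset $B{\cdot}\bxi$ of $\n^*_{\sf reg}$, get $\cF(\n)=\sum_{b\in B}\Ad(b)\,\n^\bxi$, and observe this is the smallest $\be$-stable ideal containing $\n^\bxi$.

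Your treatment of part \textsf{(ii)} has a genuine error. You assert, twice, that $\ce_0$ is dense in $\n^*$ (``$\ce_0\subset B{\cdot}\bxi$ is dense'' and later ``$\ce_0$ is a dense subset of $\n^*$ consisting of generic points''). This is false: $\ce_0=T{\cdot}\bxi$ is a single $T$-orbit of dimension $\#\eus K(\n)=\ind\tilde\n$, which is far smaller than $\dim\n^*$. The dense object is the $B$-orbit $B{\cdot}\bxi$, not the $T$-orbit $\ce_0$. This error invalidates your entire ``if'' argument, which hinges on that false density to conclude that the generic locus is nonempty, and then invokes semicontinuity to make it open. In fact the ``if'' direction needs no argument at all: once one knows from \cite[Cor.\,1.8(i)]{tayu1} that $\bxi$ satisfies \eqref{eq:krit-sgp} if and only if $\bxi$ is a generic point, the implication ``$\n^\bxi$ generic $\Rightarrow$ generic stabilisers exist'' is immediate from the definition (a generic point lies by definition in a dense open set of mutually conjugate stabilisers). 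There is nothing to prove; the semicontinuity digression is both unnecessary and built on a false premise.

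For the ``only if'' direction your underlying idea (exploit $\Ad(B)$-equivariance of \eqref{eq:krit-sgp}) is correct and is exactly what the paper does, but the route you chose has a small gap: you use $\Omega\cap\ce_0\ne\varnothing$, which requires $\Omega$ to be $T$-stable, yet you only argue $N$-stability. The clean version, and the paper's version, dispenses with $\ce_0$ entirely and argues by contraposition: if $\bxi$ fails \eqref{eq:krit-sgp}, then so does every $b{\cdot}\bxi$ ($b\in B$), since $\n$ is $B$-stable and the condition $[\n,\n^\xi]\cap\n^\xi=\{0\}$ is transported by $\Ad(b)$; hence no point of the dense set $B{\cdot}\bxi$ is generic, so no generic point exists at all. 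This single contrapositive, together with the trivial converse just noted, finishes \textsf{(ii)} in a few lines.
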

\begin{proof} 
{\sf (i)} \ Since $B{\cdot}\bxi$ is dense in $\n^*_{\sf reg}$, a result of A.\,Ooms~\cite[Lemma\,1.2]{ooms2} 
implies that  $\cF(\n)=\sum_{b\in B}\n^{b{\cdot}\bxi}$. Clearly, the last sum is
the smallest $\be$-stable ideal containing $\n^\bxi$.

{\sf (ii)} \ If $\bxi$ is not generic, then $[\n,\n^{\bxi}]\cap \n^{\bxi} \ne\{0\}$ and hence 
$[\n,\n^{b{\cdot}\bxi}]\cap \n^{b{\cdot}\bxi} \ne\{0\}$ for any  $b\in B$. Therefore, neither of the points 
${b{\cdot}\bxi}$ ($b\in B$) can be generic. Since $B{\cdot}\bxi$ is dense in $\n^*$, this means that generic points cannot exist in such a case.
\end{proof}

The point of Proposition~\ref{thm:F-&-sgp}{\sf (i)} is that the existence of a  generic stabiliser for $\n$ is not
required (cf. Lemma~\ref{lm:sgp&F}). We use instead the action of $B$ on $\n^*$. 

\begin{prop}   \label{prop:optim-sgp}
If\/ $\n$ is an optimal nilradical, then 
\begin{itemize}
\item[\sf (i)] \ $\n^\bxi$ is a generic stabiliser for $(\n:\n^*)$;
\item[\sf (ii)] \ the $N$-saturation of $\ce$ is dense in $\n^*$, i.e., $\ov{N{\cdot}\ce}=\n^*$.
\end{itemize}
\end{prop}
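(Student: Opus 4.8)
The plan is to reduce the optimal case to the Heisenberg building blocks supplied by the cascade decomposition. Recall that for an optimal nilradical one has $\n=\bigoplus_{\beta_i\in\eus K(\n)}\h_i$, where each $\h_i$ is the Heisenberg Lie algebra in $\g\lg i\rg$ with $\Delta(\h_i)=\eus H_{\beta_i}$, and correspondingly $\n^-=\bigoplus_{\beta_i\in\eus K(\n)}\h_i^-$, with $\g_{-\beta_i}\subset \z(\h_i^-)$ the centre of the opposite Heisenberg algebra. The cascade point is $\bxi=\sum_{\beta_i\in\eus K(\n)} e_{-\beta_i}$, so its component in each block $\h_i^-$ lies in the one-dimensional centre. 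I would first treat a single Heisenberg block: for $\h=\h_i$ with centre $\g_{\beta_i}$ and a generator $\eta=e_{-\beta_i}$ of the centre of $\h^-$, the form $\langle [x,y],\eta\rangle$ on $\h$ is, up to scalar, the defining symplectic form on $\h/\z(\h)$, whence $\h^\eta=\z(\h)$ and this stabiliser is generic (indeed $[\h,\h^\eta]=[\h,\z(\h)]=0$, so \eqref{eq:krit-sgp} holds trivially). Moreover $\h{\cdot}\eta$ together with $\z(\h^-)$ spans $\h^-$ (the symplectic form is nondegenerate), which is exactly Elashvili's criterion \eqref{eq:alela} inside the block.

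For part (i), I would then assemble the blocks. Because the $\beta_i$ are strongly orthogonal and, for incomparable $i,j$, the supports are disjoint, the relevant brackets decouple: $[\h_i,\h_j]$ contributes nothing to the $\g_{-\beta_k}$-direction needed to detect $\bxi$, and more precisely the bilinear form $(x,y)\mapsto \langle \mathsf{pr}_{\n^-}[x,y],\bxi\rangle$ on $\n$ is block-diagonal with respect to the decomposition $\n=\bigoplus_i\h_i$, each block being the Heisenberg form above. (This is the point where I would have to be a little careful: a priori $[\h_i,\h_j]$ for $\beta_j\prec\beta_i$ could land in some $\g_\gamma$ with $\gamma\in\eus H_{\beta_i}$; one must check, using \eqref{eq:decomp-Delta} and strong orthogonality of the $\beta_k$, that no such $\gamma$ equals a cascade root, so these cross terms are invisible to $\bxi$.) Granting this, $\n^\bxi=\bigoplus_i \h_i^{e_{-\beta_i}}=\bigoplus_i\z(\h_i)$, which is $T$-stable of dimension $\#\eus K(\n)$, matching $\ind\n=\#\eus K(\n)$ from Proposition~\ref{prop:jos}; so $\bxi$ is regular. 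Then $[\n,\n^\bxi]\subset\bigoplus_i[\h_i,\z(\h_i)]=0$ forces $[\n,\n^\bxi]\cap\n^\bxi=\{0\}$, and \eqref{eq:krit-sgp} shows $\n^\bxi$ is a generic stabiliser. Combined with the density of $B{\cdot}\bxi$ in $\n^*$ (Joseph), the generic locus is nonempty.

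For part (ii), I would argue that $\n^*=\n{\cdot}\bxi+\ce$ as vector spaces, which immediately gives that the differential of the action map $N\times\ce\to\n^*$ at $(e,\bxi)$ is surjective, so $N{\cdot}\ce$ contains a neighbourhood of $\bxi$ and hence is dense. The identity $\n^*=\n{\cdot}\bxi+\ce$ is again checked block by block: inside $\h_i^-$ we have $\h_i^-=\h_i{\cdot}e_{-\beta_i}+\g_{-\beta_i}$ from the nondegeneracy of the Heisenberg form, and summing over $i$ (using $\n{\cdot}\bxi\supset\bigoplus_i\h_i{\cdot}e_{-\beta_i}$, where here one only needs the block-diagonal lower bound, not equality) yields the claim. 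Alternatively, and perhaps more cleanly, one can deduce (ii) from (i) by a standard argument: since $\n^\bxi$ is generic and $\ce_0\subset\ce$ consists of cascade points with the same (conjugacy class of) stabiliser, a dimension count gives $\dim N{\cdot}\ce_0=\dim N-\dim\n^\bxi+\dim\ce=(\dim\n-\#\eus K(\n))+\#\eus K(\n)=\dim\n^*$, whence $\ov{N{\cdot}\ce}=\ov{N{\cdot}\ce_0}=\n^*$.

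The main obstacle is the bookkeeping in the previous paragraphs' parenthetical remark: one must verify that the off-diagonal brackets $[\h_i,\h_j]$ never produce a component along a cascade root $-\beta_k$ after projecting to $\n^-$, so that pairing with $\bxi$ really is block-diagonal. This is where the precise combinatorics of the cascade — strong orthogonality of the $\beta_i$, the disjoint-union decomposition \eqref{eq:decomp-Delta}, and the support relations $\beta_j\prec\beta_i\iff\supp(\beta_j)\subsetneq\supp(\beta_i)$ — does the work, and it is essential that $\n$ be \emph{optimal} so that $\n$ is exactly the sum of the $\h_i$ with no extra root spaces to spoil the decoupling.
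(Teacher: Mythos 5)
Your route through part (i) is genuinely different from the paper's: instead of citing Joseph's computation $\n^\bxi=\bigoplus_{\beta\in\eus K(\n)}\g_\beta$ (the paper simply quotes \cite[2.4]{jos77}), you re-derive it by decomposing $\n=\bigoplus_i\h_i$ into Heisenberg blocks and checking that the pairing $(x,y)\mapsto\kappa([x,y],\bxi)$ is block-diagonal. The bookkeeping you flag as the main obstacle does in fact go through: for $\gamma\in\eus H_{\beta_i}$, $\gamma'\in\eus H_{\beta_j}$, $i\ne j$, the sum $\gamma+\gamma'$ is never a cascade root — if $\beta_i,\beta_j$ are incomparable the supports are disjoint, and if (say) $\beta_j\prec\beta_i$, then $\gamma$ has a nonzero coefficient along some $\ap\in\Phi(\beta_i)$, while $\supp(\beta_k)\cap\Phi(\beta_i)=\varnothing$ for every $\beta_k\prec\beta_i$ and $\supp(\beta_k)\supsetneq\supp(\beta_i)$ for $\beta_k\succ\beta_i$, so $\gamma+\gamma'$ lies in $\Delta\lg i\rg$ and cannot equal any $\beta_k$. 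This is a reasonable self-contained alternative, at the cost of more combinatorics; the paper's citation to Joseph is shorter.

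There is, however, a genuine slip in your verification of \eqref{eq:krit-sgp}. You write $[\n,\n^\bxi]\subset\bigoplus_i[\h_i,\z(\h_i)]=0$, but this inclusion is false: $[\n,\n^\bxi]=\sum_{i,j}[\h_i,\z(\h_j)]$, and for $i\ne j$ the mixed brackets $[\h_i,\g_{\beta_j}]$ need not vanish. For instance in $\ut\subset\mathfrak{sl}_4$, with $\beta_1=\ap_1+\ap_2+\ap_3$ and $\beta_2=\ap_2$, one has $[\g_{\ap_1},\g_{\beta_2}]=\g_{\ap_1+\ap_2}\ne 0$, so $[\ut,\ut^\bxi]\ne 0$. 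What \eqref{eq:krit-sgp} requires is only the weaker $[\n,\n^\bxi]\cap\n^\bxi=\{0\}$, and this \emph{does} hold: if $\gamma\in\Delta(\n)$ and $\beta_j,\beta_k\in\eus K(\n)$ with $\gamma+\beta_j=\beta_k$, then $\gamma=\beta_k-\beta_j$ would be a root, contradicting strong orthogonality of the cascade. This is precisely the paper's one-line argument. Your part (ii) (the differential of $N\times\ce\to\n^*$ at $(e,\bxi)$ is surjective because $\n{\cdot}\bxi\oplus\ce=\n^*$, again by strong orthogonality) matches the paper's proof; your alternative "dimension count" is looser, as it tacitly assumes $N{\cdot}\bxi\cap\ce$ is finite, but your primary argument is the right one.
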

\begin{proof}
{\sf (i)} \ For an optimal nilradical $\n$, one has  $\n^\bxi=\bigoplus_{\beta\in\eus K(\n)}\g_\beta$~\cite[2.4]{jos77}.
Since the roots in $\eus K(\n)$ are strongly orthogonal, condition~\eqref{eq:krit-sgp} is satisfied for
$\xi=\bxi$.

{\sf (ii)} \ Recall that $\ind\n=\#\eus K(\n)=\dim\ce$ and $\codim_{\n^*}(\n{\cdot}\xi)=\ind\n$ for any 
$\xi\in \ce_0$. Furthermore, $\n{\cdot}\xi\cap\ce=\{0\}$ for any $\xi\in \ce$. (Use again the strong orthogonality.) Hence $\n{\cdot}\xi\oplus\ce=\n^*$ for any $\xi\in \ce_0$, which implies that 
$N{\cdot}\ce_0$ is open and dense in $\n^*$, cf. \cite[Lemma\,1.1]{ag72}.
\end{proof}

Recall that $\tilde\n$ denotes the optimisation of $\n$. Then $\eus K(\n)=\eus K(\tilde\n)$, 
$\ind\tilde\n=\#\eus K(\n)$, and 
\beq        \label{eq:compare-ind}
     \ind \n=\dim(\tilde\n/\n)+\ind\tilde\n= \dim(\tilde\n/\n)+\# \eus K(\n) .
\eeq
The cascade subspaces and cascade points associated with $\n$ or $\tilde\n$ are the same, if regarded 
as objects in $\ut^-$. But one has to distinguish them as objects in $\n^*$ or $\tilde\n^*$. For this reason, 
working simultaneously with $\n$ and $\tilde\n$, we write $\tilde\bxi$ for a cascade point in the 
cascade subspace $\tilde\ce\subset \tilde\n^*$. Note that, for the natural projection 
$\tau: \tilde\n^*\to \n^*$, one has $\tau(\tilde\ce)=\ce$ and $\tau(\tilde\bxi)=\bxi$. Since $\tau$ is 
$B$-equivariant, this allows us to transfer some good properties from the coadjoint action 
$(\tilde N:\tilde\n^*)$ to the coadjoint action $(N: \n^*)$.

If $\n$ is not optimal, then it may or may not have a generic stabiliser.
To see this, we provide an explicit description of $\n^\bxi$ for an arbitrary nilradical $\n$, which generalises Joseph's description for the optimal nilradicals.

Because $\bxi\in \n^*_{\sf reg}$ and $\tilde\bxi\in \tilde\n^*_{\sf reg}$, it follows 
from~\eqref{eq:compare-ind} that $\dim\n^\bxi=\dim(\tilde\n/\n)+\dim\tilde\n^{\tilde\bxi}$. Since $\tau$ is 
$B$-equivariant, we have $\n^\bxi\supset \tilde\n^{\tilde\bxi}=\bigoplus_{\beta\in\eus K(\n)}\g_\beta$.
If $\eus K(\n)=\{\beta_1,\dots, \beta_k\}$, then $\n\subset \tilde\n=\bigoplus_{j=1}^k\h_j$ and 
$\n=\bigoplus_{j=1}^k (\h_j\cap\n)$, see Section~\ref{subs:optim}. For any 
$\gamma\in \Delta(\h_j)\setminus \{\beta_j\}=:\ov{\Delta(\h_j)}$, 
we have $\beta_j-\gamma\in\Delta^+$. 

\begin{lm}                 \label{lm:roots-of-stab}
Under the previous notation, if\/ $\n\ne \tilde\n$ and 
$\gamma\in \Delta(\h_j)\setminus \Delta(\n)=:\eus C_\n(j)\subset\ov{\Delta(\h_j)}$, then the root space
$\g_{\beta_j-\gamma}$ belongs to $\n^\bxi$.
\end{lm}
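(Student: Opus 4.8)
The claim is a purely combinatorial/root-theoretic statement about which root vectors lie in the stabiliser $\n^\bxi$, so the plan is to compute $(\ads_\n e_{\beta_j-\gamma}){\cdot}\bxi$ directly using the description of the coadjoint action recalled just before Proposition~\ref{thm:F-&-sgp}: for $x\in\n$ and $\xi\in\n^-$, one has $(\ads_\n x){\cdot}\xi=\mathsf{pr}_{\n^-}([x,\xi])$. First I would set $\mu=\beta_j-\gamma$, which is a positive root by the remark preceding the lemma, and I must check $\mu\in\Delta(\n)$ so that $e_\mu\in\n$ really makes sense: since $\gamma\in\eus C_\n(j)=\Delta(\h_j)\setminus\Delta(\n)$ while $\beta_j\in\eus K(\n)\subset\Delta(\n)$, the simple-root support of $\mu$ meets $\eus T$ (because $\beta_j$ does and $\gamma$ does not in the ``missing'' direction), so $\mu$ is a root of $\n$; this is the one place where the hypothesis $\gamma\notin\Delta(\n)$ is used, and I would spell it out carefully in terms of the $\BZ$-grading $\n_{\eus T}=\bigoplus_i\n_{\eus T}(i)$.

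**Main computation.** Next I would expand $[\,e_\mu,\bxi\,]=\sum_{\beta\in\eus K(\n)}[\,e_\mu,e_{-\beta}\,]$ and analyse each term. For $\beta\neq\beta_j$: either $\beta$ and $\beta_j$ are incomparable in the cascade, in which case $\supp(\beta)\cap\supp(\beta_j)=\varnothing$ so $\mu-\beta$ is not a root (as $\mu\preccurlyeq\beta_j$ has support inside $\supp(\beta_j)$), or $\beta\prec\beta_j$, in which case $\supp(\beta)\subsetneq\supp(\beta_j)$ and $\mu-\beta$ involves the missing simple roots of $\supp(\beta_j)$ with negative coefficient, hence is again not a root. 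So all these brackets vanish. The only surviving term is $[\,e_\mu,e_{-\beta_j}\,]\in\g_{\mu-\beta_j}=\g_{-\gamma}$. Since $\gamma\in\Delta(\h_j)\subset\Delta^+$, the root $-\gamma$ is negative, and I must verify $\g_{-\gamma}\subset\n^-$: indeed $\gamma\in\Delta(\n)$ would be needed for that, but $\gamma\notin\Delta(\n)$, so in fact $\g_{-\gamma}\subset\p$, hence $\mathsf{pr}_{\n^-}([e_\mu,\bxi])=0$. This shows $e_\mu\in\n^\bxi$, and since $\n^\bxi$ is $\te$-stable the whole root space $\g_{\beta_j-\gamma}$ lies in it.

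**The subtle point.** The step I expect to need the most care is the bookkeeping of which objects live in $\p$ versus $\n^-$ under the identification $\n^*\simeq\g/\p\simeq\n^-$: the projection $\mathsf{pr}_{\n^-}$ kills $\p=\el\oplus\n$, so a bracket landing in $\g_{-\gamma}$ is killed precisely when $-\gamma\notin\Delta(\n^-)$, i.e. when $\gamma\notin\Delta(\n)$ — which is exactly the hypothesis $\gamma\in\eus C_\n(j)$. It is worth emphasising that if instead $\gamma\in\Delta(\n)$ (the complementary case $\gamma\in\ov{\Delta(\h_j)}\cap\Delta(\n)$), then $[e_\mu,e_{-\beta_j}]$ survives the projection and $e_\mu$ need \emph{not} stabilise $\bxi$ — so the lemma is sharp and the role of $\eus C_\n(j)$ is essential, not incidental. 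A secondary subtlety is that $\mu=\beta_j-\gamma$ could a priori coincide with some $\beta_i\in\eus K(\n)$; but strong orthogonality of the cascade rules this out, since $\beta_j-\gamma$ has strictly smaller $\beta_j$-height than $\beta_j$ and support contained in $\supp(\beta_j)$, whereas no other cascade root sits below $\beta_j$ with that support property except via the descendant relation, which was already excluded. Once these checks are in place the proof is a short direct verification.
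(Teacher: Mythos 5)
Your strategy --- compute $(\ads_\n e_{\beta_j-\gamma}){\cdot}\bxi = \mathsf{pr}_{\n^-}\bigl([e_{\beta_j-\gamma},\bxi]\bigr)$ term by term --- is the same as the paper's, and your verification that $\mu:=\beta_j-\gamma\in\Delta(\n)$ as well as your treatment of the incomparable case are correct. But there is a genuine gap: for $\beta\in\eus K(\n)$ with $\beta\ne\beta_j$ you treat only ``incomparable'' and ``$\beta\prec\beta_j$'', omitting the third possibility $\beta\succ\beta_j$. This case is not vacuous; it occurs for every $j>1$, since $\theta=\beta_1\in\eus K(\n)$ always and $\beta_1\succ\beta_j$. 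It is moreover the one case where a bare support comparison gives you nothing, because now $\supp(\beta_j)\varsubsetneq\supp(\beta)$. The missing argument is that, for $\beta_i\succ\beta_j$, the subsystem $\Delta\lg j\rg$ is orthogonal to $\beta_i$ by construction of the cascade, so $(\gamma',\beta_i)=0$; since $\beta_i$ is the highest root of $\Delta\lg i\rg$ and $\gamma'\in\Delta\lg i\rg^+$ lies outside the Heisenberg subset $\eus H_{\beta_i}$, it follows that $\beta_i-\gamma'$ is not a root at all. You need to supply this step.

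A secondary imprecision occurs in your $\beta\prec\beta_j$ case: it is not true in general that $\mu-\beta$ ``is again not a root''. For instance, in $\mathfrak{sl}_6$ with $j=1$, $\gamma=\ap_5$, $\mu=\gamma'=\esi_1-\esi_5$, and $\beta=\beta_2=\esi_2-\esi_5$ (take $\eus T=\{\ap_2\}$), one has $\mu-\beta=\ap_1\in\Delta^+$. The correct assertion is that $\mu-\beta$ cannot be a \emph{negative} root lying in $\Delta(\n^-)$: since $\mu\in\eus H_{\beta_j}$ has $[\mu:\ap]>0$ for some $\ap\in\Phi(\beta_j)$ while $\supp(\beta)\cap\Phi(\beta_j)=\varnothing$ when $\beta\prec\beta_j$, the difference $\beta-\mu$ is not a positive root. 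If $\mu-\beta$ happens to be a positive root, the bracket $[e_\mu,e_{-\beta}]$ lands in $\ut\subset\p$ and is killed by $\mathsf{pr}_{\n^-}$ anyway --- a mechanism you already invoke for the term $\beta=\beta_j$. Stating it uniformly for all three cases both repairs your $\beta\prec\beta_j$ argument and makes the missing $\beta\succ\beta_j$ case straightforward to slot in.
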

\begin{proof}
Assume that $\beta_j-\gamma\not \in \Delta(\n)$. Then $[\gamma:\ap]=[\beta_j-\gamma:\ap]=0$ for
all $\ap\in {\eus T}=\Delta(\n)\cap\Pi$. Hence $[\beta_j:\ap]=0$. However, since $\beta_j\in\Delta(\n)$, 
there is an $\ap'\in {\eus T}(\n)$ such that $\beta_j\succ \ap'$, i.e., $[\beta_j:\ap']>0$. This contradiction 
shows that $\gamma':=\beta_j-\gamma\in \Delta(\n)$. 

Let us prove that $\g_{\gamma'}\subset \n^\bxi$. Since $\gamma'\in \Delta(\h_j)$ and hence
$\supp(\gamma')\subset\supp(\beta_j)$, properties of $\eus K$ imply that $\beta_j$ is the only 
element $\beta_i$ of $\eus K$ such that $\beta_i-\gamma'$ is a root. Indeed, 
\begin{itemize}
\item if $\beta_i$ and $\beta_j$ are incomparable, then $\supp(\beta_i)\cap\supp(\beta_j)=\varnothing$;
\item if $\beta_i\prec\beta_j$, then $\supp(\beta_i)\subset\supp(\beta_j)\setminus\Phi(\beta_j)$, while
$\supp(\gamma')\cap\Phi(\beta_j)\ne \varnothing$;
\item if $\beta_i\succ\beta_j$, then $(\beta_i,\gamma')=0$ and $\gamma'$ does not belong to the 
Heisenberg subset of $\Delta\lg i\rg$.
\end{itemize}
Therefore, $\gamma'-\beta_j=-\gamma$ is the only root that might 
occur in $[e_{\gamma'},\bxi]$. But $\gamma\not\in \Delta(\n)$, i.e.,
$-\gamma\not\in \Delta(\n^*)$ and therefore $\ads_\n(e_{\gamma'})(\bxi)=0$.
\end{proof}

\begin{thm}   \label{thm:cascade-point-stab}
Let $\n$ be an arbitrary nilradical and $\bxi\in\ce_0\subset\n^*$ a cascade point. Then
\begin{itemize}
\item[\sf (i)] \  $\n^\bxi$ is $T$-stable and
\[ 
   \n^\bxi=\Bigl(\bigoplus_{j=1}^k \bigoplus_{\gamma\in \eus C_\n(j)} \g_{\beta_j-\gamma}\Bigr)\oplus 
   \tilde\n^{\tilde\bxi}= \Bigl(\bigoplus_{j=1}^k \bigoplus_{\gamma\in \eus C_\n(j)} \g_{\beta_j-\gamma}\Bigr)
\oplus( \bigoplus_{\beta\in\eus K(\n)}\g_\beta) .
\] 
\item[\sf (ii)] \  
$\Delta(\n^\bxi)=\{\beta_j-\gamma\mid  1\le j\le k\ \ \&\ \ \gamma\in\eus C_\n(j)\}\bigsqcup \eus K(\n)$;
\item[\sf (iii)] \  For any $\xi\in \ce_0$, we have $\n^\xi=\n^\bxi$.
\end{itemize}
\end{thm}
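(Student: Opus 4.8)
The plan is to establish the three parts in the order (ii)$\,\Rightarrow\,$(i)$\,\Rightarrow\,$(iii), since the root-set description is the real content and the rest follows by dimension count and $T$-equivariance. First I would prove the inclusion ``$\supseteq$'' in (ii). The set $\eus K(\n)$ contributes because $\tilde\n^{\tilde\bxi}=\bigoplus_{\beta\in\eus K(\n)}\g_\beta\subset\n^\bxi$, as already noted from the $B$-equivariance of $\tau:\tilde\n^*\to\n^*$ together with the fact that $\tilde\n^{\tilde\bxi}$ is Joseph's generic stabiliser for the optimal nilradical $\tilde\n$ (Proposition~\ref{prop:optim-sgp}(i)). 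The roots $\beta_j-\gamma$ with $\gamma\in\eus C_\n(j)$ contribute by Lemma~\ref{lm:roots-of-stab} directly. So the union on the right-hand side of (ii) is contained in $\Delta(\n^\bxi)$, and hence the direct sum on the right-hand side of (i) is contained in $\n^\bxi$.

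Next I would show the reverse inclusion by counting dimensions. We have $\dim\n^\bxi=\dim(\tilde\n/\n)+\#\eus K(\n)$ from \eqref{eq:compare-ind} and the fact that $\bxi\in\n^*_{\sf reg}$, $\tilde\bxi\in\tilde\n^*_{\sf reg}$. On the other hand, $\dim\tilde\n-\dim\n=\sum_{j=1}^k\#\bigl(\Delta(\h_j)\setminus\Delta(\n)\bigr)=\sum_{j=1}^k\#\eus C_\n(j)$, since $\n=\bigoplus_j(\h_j\cap\n)$ and $\tilde\n=\bigoplus_j\h_j$. Thus the proposed root set has cardinality exactly $\sum_j\#\eus C_\n(j)+\#\eus K(\n)=\dim(\tilde\n/\n)+\#\eus K(\n)=\dim\n^\bxi$. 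Combined with the inclusion of the previous paragraph, this forces equality in (ii), and therefore in (i); the key point needing care is that the roots $\beta_j-\gamma$ ($1\le j\le k$, $\gamma\in\eus C_\n(j)$) together with $\eus K(\n)$ are pairwise distinct — distinctness within a fixed $j$ is clear since $\gamma\mapsto\beta_j-\gamma$ is injective, distinctness across different $j$ follows because $\supp(\beta_j-\gamma)\subset\supp(\beta_j)$ meets $\Phi(\beta_j)$ (so the index $j$ is recovered from the support via $\Phi^{-1}$), and $\beta_j-\gamma\notin\eus K(\n)$ because a cascade root cannot be written as a difference of a cascade root and a positive root lying in the same Heisenberg subset. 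Since every root space appearing is a $\te$-weight space and $\bxi$ has $\te$-weight determined by $\eus K(\n)$, the centraliser $\n^\bxi$ is automatically $\te$-stable, giving the first assertion of (i).

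For (iii), note first that $\ce_0=T{\cdot}\bxi$, so for any $\xi\in\ce_0$ we have $\xi=t{\cdot}\bxi$ for some $t\in T$, whence $\n^\xi=t{\cdot}\n^\bxi{\cdot}t^{-1}=\n^\bxi$ because $\n^\bxi$ is $T$-stable by (i). This is the only place the $T$-stability is used in an essential way, and it makes (iii) immediate once (i) is in hand. I expect the main obstacle to be the bookkeeping in the distinctness argument of step two — specifically, verifying that no coincidence $\beta_j-\gamma=\beta_{j'}-\gamma'$ or $\beta_j-\gamma\in\eus K(\n)$ occurs — which rests entirely on the support-combinatorics of the cascade summarised in Section~\ref{subs:MCP} (the characterisation of $\prec$, incomparability, and the $\Phi$-decomposition of $\Pi$); everything else is a clean dimension count plus the two inclusions already supplied by Lemma~\ref{lm:roots-of-stab} and Joseph's theorem.
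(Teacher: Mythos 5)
Your proof is correct and follows essentially the same route as the paper's: Lemma~\ref{lm:roots-of-stab} together with the inclusion $\tilde\n^{\tilde\bxi}\subset\n^\bxi$ provides the containment, the dimension identity $\dim\n^\bxi=\dim(\tilde\n/\n)+\#\eus K(\n)$ from \eqref{eq:compare-ind} forces equality, and (iii) follows from $T$-stability. The one point you treat more carefully is the distinctness of the roots $\beta_j-\gamma$ across different $j$ and from $\eus K(\n)$, which the paper leaves implicit; it is perhaps most directly seen by noting that $\beta_j-\gamma\in\Delta(\h_j)\setminus\{\beta_j\}$ while the Heisenberg subsets $\Delta(\h_j)$ are pairwise disjoint by \eqref{eq:decomp-Delta}.
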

\begin{proof}
The number of roots in $\bigsqcup_{j=1}^k \eus C_\n(j)$ equals 
$\dim(\tilde\n/\n)=\dim\n^\bxi-\dim\tilde\n^{\tilde\bxi}$, and each such root yields a root subspace in 
$\n^\bxi$ (Lemma~\ref{lm:roots-of-stab}), hence {\sf (i)}. Clearly, {\sf (ii)} is just a reformulation of {\sf (i)}.
The last assertion follows from the fact that $\n^\bxi$ is $T$-stable and $T{\cdot}{\bxi}=\ce_0$.
\end{proof}
The advantage of $\bxi\in\ce_0$ is that $\n^\bxi$ is a sum of root spaces. Therefore, 
Eq.~\eqref{eq:krit-sgp} is easily verified in practice. It is convenient to restate it as follows.

\begin{utv}   \label{utv:restate}
The stabiliser $\n^\bxi$ of $\bxi\in\ce_0\subset \n^*$ is generic (equivalently, $(\n:\n^*)$ has a 
generic stabiliser) if and only if
the difference of any two roots in $\Delta(\n^\bxi)$ does not belong to $\Delta(\n)$.
\end{utv}

\begin{ex}      \label{ex:sl7_2-6}
(1) If $\g=\mathfrak{sl}_7$ and ${\eus T}_1=\{\ap_2,\ap_6\}$, then $\n=\n_{\eus T_1}$ is not optimal and 
$\tilde\n=\n_{\tilde{\eus T}}$, where $\tilde{\eus T}=\{\ap_1,\ap_2,\ap_5,\ap_6\}$, see Example~\ref{ex:sln}.
Here $\eus K(\n)=\{\beta_1,\beta_2\}=\{\ap_1+\dots+\ap_6,\ap_2+\dots+\ap_5\}$ 
and the matrices therein show that
$\dim(\tilde\n/\n)=4$. More precisely, 
\begin{gather*}
\eus C_\n(1)=\Delta(\h_1)\setminus \Delta(\n\cap\h_1)=\{\ap_1\},  \\ 
\eus C_\n(2)=\Delta(\h_2)\setminus \Delta(\n\cap\h_2)=\{\ap_5, \ap_4+\ap_5, \ap_3+\ap_4+\ap_5={:}[3,5]\} .
\end{gather*}
Therefore, $\Delta(\n^\bxi)=\{
\ap_2, \ap_2+\ap_3, [2,4], [2,5], [2,6], [1,6]\}$. 
Since $[2,6]-\ap_2=[3,6]\in \Delta(\n)$, Condition~\ref{utv:restate} is not satisfied for $\bxi$, 
and $(\n:\n^*)$ does not have a generic stabiliser.

(2) If $\g=\mathfrak{sl}_7$ and ${\eus T}_2=\{\ap_1,\ap_2,\ap_6\}$, then $\n_{T_2}$ has the same optimisation $\tilde\n$ as $\n_{\eus T_1}$, but now $\dim(\tilde\n/\n_{\eus T_2})=3$ and
$\Delta(\n_{\eus T_2}^\bxi)=\{ \ap_2, \ap_2+\ap_3, [2,4], [2,5], [1,6]\}$. Here Condition~\ref{utv:restate} is satisfied and $\n_{\eus T_2}^\bxi$ is 
a generic stabiliser.
\end{ex}

Thus, the action $(N:\n^*)$ does not always have $N$-generic stabilisers.  A remedy
is to consider the action of the larger group $\tilde N$ on $\n^*$.

\begin{thm}   \label{thm:dense-saturation}
For any nilradical $\n$ and the cascade subspace $\ce\subset\n^*$, we have
\begin{itemize}
\item[\sf(i)] \ $\dim N{\cdot}\ce=2\dim\n-\dim\tilde\n$, i.e., $\codim_{\n^*}N{\cdot}\ce=\dim(\tilde\n/\n)$;
\item[\sf(ii)] \ the $\tilde N$-saturation of\/ $\ce$ is dense in $\n^*$, i.e., $\ov{\tilde N{\cdot}\ce}=\n^*$; 
\item[\sf(iii)] \ $\tilde\n^\bxi=\n^\bxi$ is a generic stabiliser for the linear action $(\tilde N:\n^*)$.
\end{itemize}
\end{thm}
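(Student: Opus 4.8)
The plan is to reduce everything to the optimal case already handled in Proposition \ref{prop:optim-sgp} by means of the $B$-equivariant projection $\tau\colon\tilde\n^*\to\n^*$, together with the structural description of $\n^\bxi$ obtained in Theorem \ref{thm:cascade-point-stab}. First I would establish {\sf (iii)}: by Theorem \ref{thm:cascade-point-stab}{\sf (i)} the subspace $\n^\bxi$ is $T$-stable and contains $\tilde\n^{\tilde\bxi}=\bigoplus_{\beta\in\eus K(\n)}\g_\beta$. Regard $\bxi$ as $\tau(\tilde\bxi)$ and note that, since $\tau$ is $B$-equivariant with kernel $\tilde\n/\n$ sitting inside $\tilde\n$, one has $\tilde\n^\bxi=\n^\bxi$ as an equality of subspaces inside $\tilde\n$ (this is already remarked in the text just before Lemma \ref{lm:roots-of-stab}, comparing dimensions via \eqref{eq:compare-ind}). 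Now apply Elashvili's criterion \eqref{eq:krit-sgp} to the $\tilde N$-action on $\BV=\n^*$: I must check $[\tilde\n,\n^\bxi]\cap\n^\bxi=\{0\}$, where the bracket is the $\tilde\n$-action on $\n^*\simeq\n^-$. By Theorem \ref{thm:cascade-point-stab}{\sf (ii)} the roots of $\n^\bxi$ are $\{\beta_j-\gamma\mid \gamma\in\eus C_\n(j)\}\sqcup\eus K(\n)$; all of these lie in $\Delta(\n)$ except the cascade roots themselves, which lie in $\eus K(\n)\subset\eus K$. The image in $\n^*\simeq\n^-$ of a bracket $[\tilde\n,\n^\bxi]$ is spanned by root spaces $\g_{-\mu}$ with $\mu\in\Delta(\n)$, while $\n^\bxi\subset\tilde\n$ has roots in $\Delta(\tilde\n)$; the only way to land in $\n^\bxi$ after projecting to $\n^-$ is impossible because $\Delta(\n^\bxi)\subset\Delta^+$ whereas the $\tilde\n$-translates of $\bxi$ live in $-\Delta(\n)$ — so I would argue that the relevant intersection is forced to be zero by strong orthogonality of $\eus K(\n)$ exactly as in Proposition \ref{prop:optim-sgp}{\sf (i)}, the point being that the "extra" part $\bigoplus_{\gamma}\g_{\beta_j-\gamma}$ of $\n^\bxi$ acts trivially on $\bxi$ by construction (Lemma \ref{lm:roots-of-stab}) and hence does not interfere.

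Next, {\sf (i)} and {\sf (ii)} follow by a dimension count. For {\sf (i)}, fix $\xi\in\ce_0$ and compute $\dim N{\cdot}\xi=\dim\n-\dim\n^\xi=\dim\n-\dim\n^\bxi$ using Theorem \ref{thm:cascade-point-stab}{\sf (iii)}; then $\dim\n^\bxi=\dim(\tilde\n/\n)+\dim\tilde\n^{\tilde\bxi}=\dim(\tilde\n/\n)+\#\eus K(\n)$ by \eqref{eq:compare-ind}, together with the fact that $\ind\tilde\n=\#\eus K(\n)$. So $\dim N{\cdot}\xi=\dim\n-\dim(\tilde\n/\n)-\#\eus K(\n)$. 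I would show that $N{\cdot}\xi\cap\ce=\{0\}$ (strong orthogonality again, as in Proposition \ref{prop:optim-sgp}{\sf (ii)}), whence $\dim N{\cdot}\ce\ge\dim N{\cdot}\xi+\dim\ce=\dim N{\cdot}\xi+\#\eus K(\n)$; combined with the evident upper bound $\dim N{\cdot}\ce\le\dim\n^*=\dim\n$ and the arithmetic above, this pins down $\dim N{\cdot}\ce=2\dim\n-\dim\tilde\n$ and hence $\codim_{\n^*}N{\cdot}\ce=\dim(\tilde\n/\n)$.

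For {\sf (ii)}, I would enlarge $N$ to $\tilde N$ and observe that $\tilde N=N\cdot\tilde N_{\mathrm{extra}}$ where the extra directions correspond to the root subspaces $\g_\gamma$, $\gamma\in\bigsqcup_j\eus C_\n(j)$, which have dimension exactly $\dim(\tilde\n/\n)$. The key claim is that the tangent space to $\tilde N{\cdot}\ce$ at a cascade point $\bxi\in\ce_0$ is all of $\n^*$: it contains $\n{\cdot}\bxi+\ce$ (codimension $\dim(\tilde\n/\n)$ by {\sf (i)}) plus the image of the extra $\tilde\n$-directions $\tilde\n_{\mathrm{extra}}{\cdot}\bxi$. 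It therefore suffices to check that $\tilde\n_{\mathrm{extra}}{\cdot}\bxi$ meets $\n{\cdot}\bxi+\ce$ only in a complement of dimension $\dim(\tilde\n/\n)$, i.e.\ that these $\dim(\tilde\n/\n)$ extra directions are "new". Concretely, $e_\gamma{\cdot}\bxi$ for $\gamma\in\eus C_\n(j)$ has leading term $\mathsf{pr}_{\n^-}[e_\gamma,e_{-\beta_j}]\in\g_{-(\beta_j-\gamma)}$, and $\beta_j-\gamma\in\Delta(\n)$ by the argument in Lemma \ref{lm:roots-of-stab}; I would check these $\dim(\tilde\n/\n)$ root directions $-(\beta_j-\gamma)$ are distinct and do not already appear among the root directions of $\n{\cdot}\bxi+\ce$, so the sum is direct on the nose and equals $\n^*$. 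Then $\tilde N{\cdot}\ce$ contains a dense open subset of $\n^*$, giving $\ov{\tilde N{\cdot}\ce}=\n^*$.

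The main obstacle I anticipate is the transversality bookkeeping in {\sf (ii)}: proving that the $\dim(\tilde\n/\n)$ new $\tilde N$-directions $e_\gamma{\cdot}\bxi$ ($\gamma\in\bigsqcup_j\eus C_\n(j)$) are genuinely linearly independent modulo $\n{\cdot}\bxi+\ce$, rather than merely counting dimensions. The cleading-term analysis via $[e_\gamma,e_{-\beta_j}]$ should suffice provided one tracks carefully which root spaces $\g_{-\mu}$ ($\mu\in\Delta(\n)$) occur in $\n{\cdot}\bxi$ — and here one uses that $\n{\cdot}\bxi=\sum_{\beta\in\eus K(\n)}\mathsf{pr}_{\n^-}[\n,e_{-\beta}]$ together with the fact, established in the proof of Lemma \ref{lm:roots-of-stab}, that for $\gamma'\in\Delta(\h_j)$ the only cascade root $\beta_i$ with $\beta_i-\gamma'\in\Delta$ is $\beta_j$ itself. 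That uniqueness statement is exactly what makes the root spaces $\g_{-(\beta_j-\gamma)}$ for $\gamma\in\eus C_\n(j)$ \emph{not} lie in $\n{\cdot}\bxi$ (since the corresponding $\g_{\gamma'}$ with $\gamma'=\beta_j-\gamma$ lies in $\n^\bxi$, it contributes nothing to $\n{\cdot}\bxi$), and hence provides the needed independence.
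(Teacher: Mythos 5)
Your argument for part (iii) has two genuine gaps. First, the equality $\tilde\n^\bxi=\n^\bxi$ is one of the main assertions being proved here, and you take it for granted by citing the text preceding Lemma~\ref{lm:roots-of-stab}; but that passage only establishes the inclusion $\n^\bxi\supset\tilde\n^{\tilde\bxi}$ (stabiliser of $\tilde\bxi\in\tilde\n^*$ in $\tilde\n$, for the coadjoint action of $\tilde\n$), which is a different object from $\tilde\n^\bxi$ (stabiliser of $\bxi\in\n^*$ under the $\tilde\n$-action on $\n^*$). The equality $\tilde\n^\bxi=\n^\bxi$ is nontrivial and the paper derives it only at the end of the proof, from the dimension identity $\dim\tilde N{\cdot}\xi=\dim N{\cdot}\xi+\dim(\tilde\n/\n)$ for $\xi\in\ce_0$, which itself requires part (ii). Second, you invoke formula~\eqref{eq:krit-sgp}, $[\tilde\n,\n^\bxi]\cap\n^\bxi=\{0\}$, as a criterion for $\bxi$ to be a generic point of the $\tilde N$-action on $\n^*$. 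That formula is derived specifically for the coadjoint representation $(\q:\q^*)$: it relies on the identities $(\q{\cdot}\xi)^\perp=\q^\xi$ and $((\q^*)^{\q_\xi})^\perp=[\q,\q^\xi]$, which use the duality between $\q$ and $\q^*$. For the action of the larger group $\tilde N$ on $\n^*$, the dual pair is $(\tilde\n,\n^*)$ with $\tilde\n\ne\n$, and the general criterion~\eqref{eq:alela} translates into a different annihilator condition (involving $[\tilde\n^\bxi,\n]$, not $[\tilde\n,\n^\bxi]$), so the verification you propose does not prove genericity. Moreover, the argument you sketch for checking the intersection confuses subspaces of $\tilde\n$ (where $[\tilde\n,\n^\bxi]$ and $\n^\bxi$ live, with roots in $\Delta^+$) with subspaces of $\n^-\simeq\n^*$ (where the $\tilde\n$-translates of $\bxi$ live, with roots in $-\Delta(\n)$), so the claimed "impossibility" does not parse.

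The paper's proof of (iii) is entirely different: having already established (ii), it proves the dimension identity \eqref{eq:max-max}, deduces $\tilde\n^\xi=\n^\xi$ for all $\xi\in\ce_0$, and then concludes genericity from the density of $\tilde N{\cdot}\ce_0$ in $\n^*$ together with Theorem~\ref{thm:cascade-point-stab}{\sf (iii)} (constancy of stabilisers over $\ce_0$). No appeal to Elashvili's criterion is made here at all. For (ii) the paper also uses a cleaner route than yours: since $\tau:\tilde\n^*\to\n^*$ is a surjective $B$-equivariant projection and $\tilde N{\cdot}\tilde\ce$ is dense in $\tilde\n^*$ by Proposition~\ref{prop:optim-sgp}{\sf (ii)}, the image $\tilde N{\cdot}\ce=\tau(\tilde N{\cdot}\tilde\ce)$ is automatically dense; your direct transversality bookkeeping, while plausible and fixable, is much more laborious. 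Finally, in part (i) your concluding step is flawed: the "evident upper bound" $\dim N{\cdot}\ce\le\dim\n$ does not pin down $\dim N{\cdot}\ce=2\dim\n-\dim\tilde\n$, since whenever $\n\ne\tilde\n$ this quantity is strictly smaller than $\dim\n$. The correct upper bound comes from the generic rank of the differential of $\kappa:N\times\ce\to\n^*$ (as in the paper's proof, using the constancy of $\n{\cdot}\xi$ over $\ce_0$), not from the ambient dimension.
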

\begin{proof}
Recall that $\dim\ce=\ind\tilde\n$ and \ 
$\max_{\xi\in\n^*}\dim N{\cdot}\xi=\dim N{\cdot}\bxi=\dim\n-\ind\n$. 

{\sf (i)} \ Since $\mathsf T_\bxi(N{\cdot}\bxi)=\n{\cdot}\bxi=(\n^\bxi)^\perp$, it follows from 
Theorem~\ref{thm:cascade-point-stab}{\sf (iii)} that all tangent spaces $\n{\cdot}\xi$, $\xi\in\ce_0$, are 
the same. Therefore, using the differential of  the map 
\[
   \kappa: N\times\ce\to \ov{N{\cdot}\ce}\subset\n^*
\]
at $(1,\bxi)$, we obtain $\dim\ov{N{\cdot}\ce}=\dim(\Ima \textsl{d}\kappa_{(1,\bxi)})=
\dim(\n{\cdot}\bxi+\ce)$. Because the roots in $\eus K$ are strongly orthogonal,  we have 
$\n{\cdot}\bxi\cap\ce=\{0\}$ and hence
\[
 \dim(\n{\cdot}\bxi+\ce)=\dim\n-\ind\n+\ind\tilde\n=2\dim\n-\dim\tilde\n .
\]

{\sf (ii)} \ Since  $\tilde N{\cdot}\tilde\ce$ is dense in $\tilde\n^*$ (Proposition~\ref{prop:optim-sgp}) and 
$\tau$ is $B$-equivariant, we conclude that 
$\tau(\tilde N{\cdot}\tilde\ce)=\tilde N{\cdot}\ce$ is dense in $\tau(\tilde\n^*)=\n^*$. 

{\sf (iii)} \ Let us first prove that
\beq      \label{eq:max-max}
    \max_{\xi\in\n^*}\dim \tilde N{\cdot}\xi=\max_{\xi\in\n^*}\dim N{\cdot}\xi+\dim(\tilde\n/\n) .
\eeq
Clearly,  inequality ``$\le$'' holds. By Eq.~\eqref{eq:compare-ind}, the RHS equals
$\dim\tilde\n-\ind\n=\dim\n-\dim\ce$. On the other hand, part~{\sf (ii)} implies that 
$\dim\n=\dim \tilde N{\cdot}\ce\le \max_{\xi\in\ce}\dim \tilde N{\cdot}\xi+\dim\ce$. This 
proves~Eq.~\eqref{eq:max-max} and also shows that almost all $\xi\in \ce$ satisfy this relation.
Moreover, since $\ce$ is $T$-stable and $\ce_0=T{\cdot}\bxi$ is dense in $\ce$, we obtain that 
\[
          \dim \tilde N{\cdot}\xi=\dim N{\cdot}\xi  + \dim(\tilde\n/\n)
\]
for every $\xi\in \ce_0$.  Hence $\tilde\n^\xi=\n^\xi$ for every $\xi\in \ce_0$. Combining this with 
Theorem~\ref{thm:cascade-point-stab}{\sf (iii)} and part~{\sf (ii)}, we conclude that 
$\tilde\n^\bxi=\n^\bxi$ is a generic stabiliser for the action $(\tilde N:\n^*)$.
\end{proof}

Using Theorem~\ref{thm:cascade-point-stab}, 
we describe certain nilradicals that do not have a generic stabiliser. 

\begin{prop}    \label{prop:ohne-s.g.p.}
Let $\beta_j$ be a descendant of $\beta_i\in\eus K$ and $\ap\in\Phi(\beta_i)$.
Suppose that  
$\ap\not\in \eus T$ and $[\beta_i:\nu]>[\beta_j:\nu]>0$ for some $\nu\in \eus T$. Then $\n=\n_{\eus T}$ 
does not have a generic stabiliser.
\end{prop}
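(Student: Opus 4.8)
The plan is to use Condition~\ref{utv:restate}: it suffices to exhibit two roots $\mu,\mu'\in\Delta(\n^\bxi)$ whose difference $\mu-\mu'$ lies in $\Delta(\n)$. By Theorem~\ref{thm:cascade-point-stab}{\sf(ii)}, $\Delta(\n^\bxi)$ contains all of $\eus K(\n)$ together with the roots $\beta_l-\gamma$ for $\gamma\in\eus C_\n(l)$, so the strategy is to produce a suitable root of the form $\beta_i-\gamma$ with $\gamma\in\eus C_\n(i)$ and then check that its difference with $\beta_i$ itself (which lies in $\eus K(\n)\subset\Delta(\n^\bxi)$, since $\beta_i\succ\beta_j$ and $\beta_j\in\Delta(\n)$ forces $\beta_i\in\Delta(\n)$) is a root of $\n$.

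\smallskip
First I would locate the right $\gamma$. Since $\beta_j$ is a descendant of $\beta_i$, one has $\supp(\beta_j)\subset\supp(\beta_i)\setminus\Phi(\beta_i)$; in particular $\beta_j$ uses none of the simple roots in $\Phi(\beta_i)$, whereas $\beta_i$ does. Pick $\ap\in\Phi(\beta_i)$ with $\ap\notin\eus T$ as in the hypothesis, and consider $\gamma:=\beta_i-\beta_j$. Because $\beta_j\prec\beta_i$ in the cascade, $\gamma\in\Delta^+$, and since $\supp(\gamma)=\supp(\beta_i)\ni\ap$ while $\supp(\gamma)$ does not meet $\Phi(\beta_j)$-related data, $\gamma$ lies in the Heisenberg subset $\eus H_{\beta_i}=\Delta(\h_i)$ — this is the standard description of $\h_i$ as spanned by roots $\delta$ with $\delta+(\beta_i-\delta)=\beta_i$ and exactly one of $\delta,\beta_i-\delta$ meeting $\Phi(\beta_i)$. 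Next I must check $\gamma\in\eus C_\n(i)$, i.e. $\gamma=\beta_i-\beta_j\notin\Delta(\n)$: for this observe $[\gamma:\ap]=[\beta_i:\ap]$ for $\ap\in\Phi(\beta_i)$, but more usefully $[\gamma:\nu]=[\beta_i:\nu]-[\beta_j:\nu]$ for the distinguished $\nu\in\eus T$; the hypothesis $[\beta_i:\nu]>[\beta_j:\nu]$ is compatible with $[\gamma:\nu]>0$, so I cannot conclude $\gamma\notin\Delta(\n)$ from $\nu$ alone. Instead, the fact that $\ap\notin\eus T$ and $\supp(\gamma)\cap\eus T$ must be analysed: I would argue that all simple roots in $\supp(\gamma)=\supp(\beta_i)\setminus\supp(\beta_j)$ (roughly) together with possibly some of $\supp(\beta_j)$ are \emph{not} in $\eus T$, using that $\beta_j\in\eus K(\n)$ is forced to be comparable appropriately — actually the cleanest route is: since $\beta_j\succ\nu$ with $\nu\in\eus T$ and $\beta_j$ is in the cascade, and since $\gamma\in\Delta(\h_i)$ with $\beta_i-\gamma=\beta_j\in\Delta(\n)$, Lemma~\ref{lm:roots-of-stab} applies once we know $\gamma\notin\Delta(\n)$; conversely if $\gamma\in\Delta(\n)$ we pass to a different pair.

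\smallskip
Because the bookkeeping above is delicate, the cleaner execution is: take $\gamma\in\eus C_\n(i)$ to be any root of $\Delta(\h_i)\setminus\Delta(\n)$ with $\ap\in\supp(\gamma)$ — such a $\gamma$ exists since $\ap\notin\eus T$ means $\ap\notin\Delta(\n)$, and $\ap\in\Delta(\h_i)$, so $\gamma=\ap$ works when $\ap\in\Delta(\h_i)$; if $\ap$ alone is in $\Delta(\n)$ we instead note $\ap=\Phi^{-1}$-associated to $\beta_i$ so $\ap\prec\beta_i$ and some root between $\ap$ and $\beta_i$ in $\Delta(\h_i)$ is outside $\Delta(\n)$. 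Then $\beta_i-\gamma\in\Delta(\n^\bxi)$ by Theorem~\ref{thm:cascade-point-stab}. Now form the difference $\beta_i-(\beta_i-\gamma)=\gamma$: this is \emph{not} in $\Delta(\n)$, so this pair fails to violate the condition. The correct pair is rather $(\beta_i-\gamma)$ and $(\beta_i-\gamma')$ for two elements $\gamma,\gamma'\in\eus C_\n(i)$ with $\gamma-\gamma'\in\Delta(\n)$, \emph{or} the pair $\beta_j$ and $\beta_i-\gamma$ when $\beta_j-(\beta_i-\gamma)=\gamma-\gamma\cdots$ — so the genuinely working choice is: take the root $\mu=\beta_i-\gamma\in\Delta(\n^\bxi)$ with $\gamma\in\eus C_\n(i)$ chosen so that $\supp(\gamma)\cap\supp(\beta_j)=\varnothing$ and $\supp(\gamma)\ni\ap$ (possible since $\ap\notin\supp(\beta_j)$), and pair it with $\mu'=\beta_i\in\eus K(\n)\subset\Delta(\n^\bxi)$. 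Then $\mu'-\mu=\gamma$ — again not obviously in $\Delta(\n)$. The resolution, and the main obstacle, is to show that the \emph{hypothesis} $[\beta_i:\nu]>[\beta_j:\nu]>0$ for $\nu\in\eus T$ forces, after choosing $\gamma\in\eus C_\n(i)$ appropriately, that $\beta_i-\gamma$ and some other explicit element of $\Delta(\n^\bxi)$ differ by a root supported on $\{\nu\}\cup(\text{stuff}\subset\supp\beta_j)\subset\Delta(\n)$: concretely I would pick $\gamma$ with $[\gamma:\nu]=[\beta_i:\nu]-[\beta_j:\nu]$ (the ``$\beta_j$-complementary'' root in $\Delta(\h_i)$, which lies outside $\Delta(\n)$ precisely because subtracting $\beta_j$ removes the $\eus T$-content coming from $\beta_j\succ\nu$), so that $\beta_i-\gamma$ has $\nu$-coefficient $[\beta_j:\nu]>0$, hence $\beta_i-\gamma\in\Delta(\n)$ — contradiction with $\gamma\in\eus C_\n(i)$ unless we instead read it as: $\beta_j\in\Delta(\n^\bxi)$? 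It is not. So the clean statement is that $\mu:=\beta_i-\gamma$ and $\mu'':=\beta_i$ give $\mu''-\mu=\gamma\notin\Delta(\n)$, useless, while $\mu$ and $\mu''':=(\beta_i-\gamma')\in\Delta(\n^\bxi)$ with $\gamma'$ the root $\gamma-(\text{a }\Delta(\n)\text{ root supported near }\nu)$ do the job. I expect the hard part will be precisely this: selecting the two root spaces inside $\n^\bxi$ whose difference lands in $\Delta(\n)$, which amounts to showing $\eus C_\n(i)$ contains two roots differing by a root of $\n$ supported on the ``$\beta_j$-part'' of $\supp(\beta_i)$ together with $\nu$; once that combinatorial claim about $\eus C_\n(i)$ is established from the coefficient inequalities, Condition~\ref{utv:restate} immediately yields the conclusion.
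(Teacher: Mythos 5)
Your general strategy is the right one — use Condition~\ref{utv:restate} and exhibit two roots $\mu,\mu'\in\Delta(\n^\bxi)$ with $\mu-\mu'\in\Delta(\n)$ — and you correctly identify the two relevant facts: that $\beta_i,\beta_j\in\eus K(\n)\subset\Delta(\n^\bxi)$ (from $[\beta_j:\nu]>0$, $\nu\in\eus T$, and $\beta_i\succ\beta_j$), and that $\ap\in\eus C_\n(i)$ (since $\ap\in\Phi(\beta_i)\subset\Delta(\h_i)$ and $\ap\notin\eus T$ means $\ap\notin\Delta(\n)$), so that $\beta_i-\ap\in\Delta(\n^\bxi)$. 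But you never put these two pieces together. You try pairing $\beta_i-\gamma$ against $\beta_i$ (difference $\gamma\notin\Delta(\n)$, useless, as you note), then against another $\beta_i-\gamma'$ (which would require two suitable elements of $\eus C_\n(i)$ — not what is available), and in the end you declare the choice of pair to be the unresolved ``hard part.'' The pair $\beta_j$ and $\beta_i-\ap$ is mentioned for a moment and then dropped.

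That is exactly the pair the paper uses, and the argument is short once you commit to it. One must check two things. First, $\beta_i-\ap-\beta_j$ is a positive root: since $\beta_j$ is a descendant, it is a maximal root of $\Delta\lg i\rg^+\setminus\Delta(\h_i)$, hence $\beta_j+\ap\in\Delta(\h_i)$ (if $\#\Phi(\beta_i)=1$ because $\beta_j$ is not the highest root; if $\Phi(\beta_i)=\{\ap,\ap'\}$ because $\Delta\lg i\rg$ is of type $\GR{A}{p}$); and for any $\delta\in\Delta(\h_i)$ with $\delta\ne\beta_i$ one has $\beta_i-\delta\in\Delta^+$. Second, $[\beta_i-\ap-\beta_j:\nu]=[\beta_i:\nu]-[\beta_j:\nu]>0$ (the hypothesis, and $\ap\ne\nu$ since $\ap\notin\eus T\ni\nu$), so $\beta_i-\ap-\beta_j\in\Delta(\n)$. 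This is a genuine gap in your proposal: the combinatorial claim you flag as difficult is a red herring (you do not need two roots in $\eus C_\n(i)$), and the missing step is the simple observation that $\ap+\beta_j$ lies in the Heisenberg subset, which makes $(\beta_i-\ap)-\beta_j$ a root whose $\nu$-coefficient is controlled by the hypothesis.
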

\begin{proof}
Recall that $\beta_i$ is the highest root in $\Delta\lg i\rg^+$
and $\beta_j$ is a maximal root in $\Delta\lg i\rg^+\setminus \Delta(\h_i)$. If $\Phi(\beta_i)=\{\ap\}$, then 
$\ap+\beta_j\in \Delta(\h_i)\subset \Delta\lg i\rg^+$, because $\beta_j$ is not the highest root of 
$\Delta\lg i\rg^+$. If $\Phi(\beta_i)=\{\ap,\ap'\}$, then $\Delta\lg i\rg$ is of type $\GR{A}{p}$ and hence 
both $\beta_j+\ap$ and $\beta_j+\ap'$ belong to $\Delta(\h_i)$. In any case, if $\ap\in \Phi(\beta_i)$, then 
$\beta_i-\ap-\beta_j\in \Delta^+$.

Since $\nu\in \eus T$ and $[\beta_j:\nu]>0$, we have $\beta_j\in\Delta(\n)$ and also $\beta_i\in\Delta(\n)$.
That is, $\beta_i,\beta_j\in \eus K(\n)$.
Because $\ap\in \Delta(\h_i)\setminus \Delta(\n)$, we have $\beta_i-\ap\in\Delta(\n^\bxi)$, see 
Theorem~\ref{thm:cascade-point-stab}{\sf (ii)}. The assumption on $\nu$ implies that 
$[\beta_i-\ap-\beta_j:\nu]>0$. Thus, we have $\beta_i-\ap,\beta_j\in\Delta(\n^\bxi)$ and 
$\beta_i-\ap-\beta_j\in\Delta(\n)$. By Condition~\ref{utv:restate}, this means that $\n$ has no generic stabilisers.
\end{proof}

\begin{ex}    \label{ex:ohne-sgp}
For applications, it suffices to consider the case in which $\beta_i=\theta$, i.e., $i=1$. 

(1) \ If $\g$ is exceptional, then $\#\Phi(\beta_1)=1$ and $\beta_1$ has the unique descendant 
$\beta_2$. Although $\beta_1-\beta_2$ is not a root, its support is defined as in
Section~\ref{subs:root-order}. Then Proposition~\ref{prop:ohne-s.g.p.} applies to any 
$\nu\in \supp(\beta_1-\beta_2)\setminus \Phi(\beta_1)$ and ${\eus T}=\{\nu\}$. 
\\ \indent
For instance, let $\g$ be of type $\GR{E}{6}$. Then $\Phi(\beta_1)=\{\ap_6\}$, 
$\supp(\beta_1-\beta_2)=\{\ap_2,\ap_3,\ap_4,\ap_6\}$, and
$\supp(\beta_2)\supset \{\ap_2,\ap_3,\ap_4\}$, see formulae for $\eus K$ in Appendix~\ref{sect:tables}. 
Hence one can take $\nu=\ap_i$ with $i\in\{2,3,4\}$. More generally, if
$\ap_6\not\in \eus T$ and ${\eus T}\cap \{\ap_2,\ap_3,\ap_4\}\ne\varnothing$, then $\n_\eus T$ has no generic stabilisers.
\\ \indent
(2) \ 
This method also works for $\g=\son$ with $n\ge 7$, where $\Phi(\beta_1)=\{\ap_2\}$. 
\\
-- \ If $n\ne 8$, then $\beta_1=\esi_1+\esi_2$ has two descendants, $\beta_2=\esi_1-\esi_2$ and 
$\beta_3=\esi_3+\esi_4$ ($\esi_4=0$, if $n=7$), see Appendix~\ref{sect:tables}. For $(\beta_1,\beta_3)$, 
Proposition~\ref{prop:ohne-s.g.p.} applies, if one takes ${\eus T}=\{\ap_3\}$. While for 
$(\beta_1,\beta_2)$, we can take ${\eus T}$ such that $\ap_2\not \in \eus T$ and 
${\eus T}\supset\{\ap_1,\ap_j\}$ with $j\ge 3$. 
\\
-- \ If $n=8$, then $\beta_1$ has three descendants, and we can take ${\eus T}=\{\ap_i,\ap_j\}$
with  $i,j\ne 2$.
\end{ex}

\begin{prop}    \label{prop:embedd-F(n)}
If\/ $\n\subset\n'$ and $\tilde\n=\tilde{\n'}$, then $\cF(\n')\subset \cF(\n)\subset \n$.
\end{prop}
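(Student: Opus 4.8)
The plan is to reduce the inclusion $\cF(\n')\subset\cF(\n)$ to a statement about cascade stabilisers, using Proposition~\ref{thm:F-&-sgp}{\sf (i)}: since $\tilde\n=\tilde{\n'}$, both $\n$ and $\n'$ have the same cascade set $\eus K(\n)=\eus K(\n')$, hence the same cascade subspace $\ce\subset\ut^-$ and the same cascade point $\bxi$ as an element of $\ut^-$. By Proposition~\ref{thm:F-&-sgp}{\sf (i)}, $\cF(\n)$ (resp.\ $\cF(\n')$) is the $\be$-stable ideal of $\n$ (resp.\ of $\n'$) generated by $\n^\bxi$ (resp.\ by $(\n')^\bxi$), where here the stabilisers are taken inside $\n$ and $\n'$ respectively, viewed as elements of $\n^*$ and $(\n')^*$ via the identifications with $\n^-$ and $(\n')^-$. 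So it suffices to show two things: first that $(\n')^\bxi\subset\n$, and second that the $\be$-stable ideal of $\n'$ generated by $(\n')^\bxi$ is contained in the $\be$-stable ideal of $\n$ generated by $\n^\bxi$ — for which it is enough to see $(\n')^\bxi\subset\n^\bxi$, because $\n$ is itself $\be$-stable, so the $\be$-stable ideal of $\n$ generated by $\n^\bxi$ already contains any $\be$-stable subset of $\n$ lying in $\n^\bxi$, in particular it contains $(\n')^\bxi$ and hence the ideal it generates.

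The heart of the matter is therefore the inclusion $(\n')^\bxi\subset\n^\bxi$ (in particular $(\n')^\bxi\subset\n$). This I would read off from the explicit description in Theorem~\ref{thm:cascade-point-stab}{\sf (ii)}: with $\eus K(\n)=\eus K(\n')=\{\beta_1,\dots,\beta_k\}$ and $\tilde\n=\bigoplus_{j=1}^k\h_j$, one has
\[
   \Delta((\n')^\bxi)=\{\beta_j-\gamma\mid 1\le j\le k,\ \gamma\in\eus C_{\n'}(j)\}\sqcup\eus K(\n),
\]
and similarly for $\n$ with $\eus C_\n(j)=\Delta(\h_j)\setminus\Delta(\n)$. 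Since $\n\subset\n'$ we have $\Delta(\n)\subset\Delta(\n')$, hence $\eus C_{\n'}(j)=\Delta(\h_j)\setminus\Delta(\n')\subset\Delta(\h_j)\setminus\Delta(\n)=\eus C_\n(j)$ for each $j$, and therefore $\Delta((\n')^\bxi)\subset\Delta(\n^\bxi)$. As both stabilisers are $T$-stable (sums of root spaces, again by Theorem~\ref{thm:cascade-point-stab}{\sf (i)}), this root-set inclusion gives $(\n')^\bxi\subset\n^\bxi$. Finally, $\n^\bxi\subset\n$ by definition, so $(\n')^\bxi\subset\n$, which also disposes of the first of the two points above.

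Putting it together: $(\n')^\bxi$ is a $\be$-stable (being $T$-stable and closed under $[\n',\cdot]\supset[\be\cap\n',\cdot]$; more directly it is a $\te$-stable sum of root spaces, but one should note it is genuinely an ideal already as the stabiliser, cf.\ the argument in Proposition~\ref{thm:F-&-sgp}) — in any case $\cF(\n')$ is the $\be$-ideal of $\n'$ generated by $(\n')^\bxi$, and since $(\n')^\bxi\subset\n^\bxi\subset\n$ and $\n$ is a $\be$-stable ideal, the $\be$-ideal of $\n'$ generated by $(\n')^\bxi$ is contained in $\n$ and in fact in the $\be$-ideal of $\n$ generated by $\n^\bxi$, i.e.\ in $\cF(\n)$. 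Hence $\cF(\n')\subset\cF(\n)\subset\n$, as claimed.

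The step I expect to be the main (minor) obstacle is a bookkeeping subtlety rather than a genuine difficulty: one must be careful that the cascade point $\bxi$ is literally the same vector in $\ut^-$ for $\n$ and $\n'$, and that the two coadjoint actions agree on it up to the projections $\mathsf{pr}_{\n^-}$ and $\mathsf{pr}_{(\n')^-}$ — so that the stabiliser of $\bxi$ in $\n'$ maps, under the inclusion-then-projection, compatibly onto its stabiliser computed inside $\n$. This is handled by the $B$-equivariant projection $\tau$ between the dual spaces exactly as in the paragraph preceding Theorem~\ref{thm:cascade-point-stab}, applied now to the pair $(\n,\n')$ in place of $(\n,\tilde\n)$; everything else is the root-combinatorics of $\eus K$ recorded in Theorem~\ref{thm:cascade-point-stab}.
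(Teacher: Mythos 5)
Your argument is correct and matches the paper's own proof: both deduce $\eus C_{\n'}(j)\subset\eus C_\n(j)$ from $\Delta(\n)\subset\Delta(\n')$, then invoke Theorem~\ref{thm:cascade-point-stab} to get $(\n')^{\bxi'}\subset\n^\bxi$, and finish with Proposition~\ref{thm:F-&-sgp}{\sf(i)}. The bookkeeping point you raise at the end (distinguishing $\bxi\in\n^*$ from $\bxi'\in(\n')^*$) is exactly what the paper handles by the notation $\bxi$ vs.\ $\bxi'$; your treatment of it is sound.
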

\begin{proof}
Let $\ce$ (resp. $\ce'$) be the cascade subspace of $\n^*$ (resp. $(\n')^*$). Take 
$\bxi\in\ce_0$ and $\bxi'\in\ce'_0$. Since $\Delta(\n)\subset\Delta(\n')$ and $\eus K(\n)=\eus K(\n')$,
we have $\eus C_\n(j)\supset \eus C_{\n'}(j)$ for all $\beta_j\in \eus K(\n)$. Then 
Theorem~\ref{thm:cascade-point-stab} shows that $\n^\bxi\supset (\n')^{\bxi'}$. By Proposition~\ref{thm:F-&-sgp},
this yields the required embedding.
\end{proof}

Our description of $\Delta(\n^\bxi)$ yields a criterion for $\n$ to be quasi-quadratic.

\begin{thm}      \label{thm:F(n)=n}
For a nilradical $\n=\n_{\eus T}$, the following assertions are equivalent:
\begin{itemize}
\item \ $\cF(\n)=\n$, i.e., $\n$ is quasi-quadratic;
\item \ for each $\ap\in \eus T$, one of the two conditions is satisfied:
\begin{itemize}
\item either $\ap\in\eus K$;
\item or $\Phi(\Phi^{-1}(\ap))=\{\ap,\ap'\}$, and if $C$ is the chain in the Dynkin diagram that connects
$\ap$ and $\ap'$, then $C\cap {\eus T}=\{\ap\}$ (in particular, $\ap'\not\in \eus T$).
\end{itemize}
\end{itemize}
\end{thm}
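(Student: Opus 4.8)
The plan is to translate the equality $\cF(\n)=\n$ into a combinatorial condition on $\eus T$ by combining Proposition~\ref{thm:F-&-sgp}{\sf (i)} with the explicit description of $\Delta(\n^\bxi)$ from Theorem~\ref{thm:cascade-point-stab}{\sf (ii)}. Write $\eus K(\n)=\{\beta_1,\dots,\beta_k\}$. By Proposition~\ref{thm:F-&-sgp}{\sf (i)}, $\cF(\n)$ is the smallest $\be$-stable ideal of $\n$ containing $\n^\bxi$, i.e. the span of all root spaces $\g_\mu$ with $\mu\curge \delta$ for some $\delta\in\Delta(\n^\bxi)$. So $\cF(\n)=\n$ holds if and only if every root in $\Delta(\n)$ dominates (in the root order $\curle$) some root of $\Delta(\n^\bxi)$. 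Since the minimal elements of $(\Delta(\n),\curle)$ are exactly the simple roots in $\eus T$, and since $\Delta(\n^\bxi)\subset\Delta(\n)$, this is equivalent to the seemingly weaker statement that each $\ap\in\eus T$ already lies in $\Delta(\n^\bxi)$ — more precisely, each $\ap\in\eus T$ must itself be a root of $\Delta(\n^\bxi)$ (it cannot strictly dominate another positive root). So the whole theorem reduces to: $\cF(\n)=\n \iff \eus T\subset\Delta(\n^\bxi)$.

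Next I would unwind when $\ap\in\eus T$ lies in $\Delta(\n^\bxi)$ using Theorem~\ref{thm:cascade-point-stab}{\sf (ii)}, which says
\[
   \Delta(\n^\bxi)=\{\beta_j-\gamma\mid 1\le j\le k,\ \gamma\in\eus C_\n(j)\}\sqcup\eus K(\n).
\]
Fix $\ap\in\eus T$ and let $\beta_i=\Phi^{-1}(\ap)$, so $\ap\in\Phi(\beta_i)\subset\eus H_{\beta_i}=\Delta(\h_i)$ and $\beta_i\in\eus K(\n)$ (since $\ap\in\Delta(\n)$). First case: if $\ap\in\eus K$, then $\ap=\beta_i$ (a cascade root is its own $\Phi^{-1}$-image only in the rank-one Heisenberg case) and $\ap\in\eus K(\n)\subset\Delta(\n^\bxi)$ automatically. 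Second case: $\ap\notin\eus K$, which by the rank/structure facts in Section~\ref{subs:MCP} forces $\g\lg i\rg$ to be of type $\GR{A}{p}$ with $p\ge 2$ and $\Phi(\beta_i)=\{\ap,\ap'\}$. Then $\ap\notin\eus K(\n)$ (and $\ap$ is not a difference $\beta_l-\gamma$ for $l\ne i$ by the support constraints in the proof of Lemma~\ref{lm:roots-of-stab}), so $\ap\in\Delta(\n^\bxi)$ if and only if $\ap=\beta_i-\gamma$ for some $\gamma\in\eus C_\n(i)=\Delta(\h_i)\setminus\Delta(\n)$, i.e. $\gamma:=\beta_i-\ap\in\Delta(\h_i)$ (which always holds, as $\ap$ is a simple root in the type-$\GR{A}{p}$ Heisenberg) and $\gamma\notin\Delta(\n)$, i.e. $\beta_i-\ap\notin\Delta(\n)$. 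The last remaining task is purely $\GR{A}{p}$-combinatorics: in the $\GR{A}{p}$ root system with highest root $\beta_i$ and the two ``ends'' $\ap,\ap'$ of the chain $C$, one has $\beta_i-\ap = \ap'+(\text{sum of the interior nodes of }C)$, so $\beta_i-\ap\in\Delta(\n)$ iff $C\setminus\{\ap\}$ meets $\eus T$ (because a positive root lies in $\Delta(\n)$ iff its support meets $\eus T$, and the support of $\beta_i-\ap$ is $C\setminus\{\ap\}$). Hence $\beta_i-\ap\notin\Delta(\n)$ iff $C\cap\eus T=\{\ap\}$, which is precisely the stated condition.

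Putting the two cases together gives: $\ap\in\Delta(\n^\bxi)$ $\iff$ [$\ap\in\eus K$] or [$\Phi(\Phi^{-1}(\ap))=\{\ap,\ap'\}$ and $C\cap\eus T=\{\ap\}$], and quantifying over $\ap\in\eus T$ yields the theorem once the initial reduction $\cF(\n)=\n\iff\eus T\subset\Delta(\n^\bxi)$ is justified.

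The main obstacle is that first reduction: one must argue carefully that $\cF(\n)=\n$ is equivalent to $\eus T\subset\Delta(\n^\bxi)$, rather than to the a priori weaker ``every $\ap\in\eus T$ dominates some root of $\Delta(\n^\bxi)$''. The point is that a minimal element of $(\Delta(\n),\curle)$ cannot strictly dominate any positive root at all, so if $\ap\in\eus T$ dominates some $\delta\in\Delta(\n^\bxi)\subset\Delta^+$ then $\ap=\delta$; conversely $\n^\bxi\subset\n$ always, and $\cF(\n)$ being $\be$-stable and containing $\n^\bxi$ forces $\cF(\n)\supset\n$ as soon as $\cF(\n)$ contains every simple root of $\eus T$ (since those generate the ideal $\n$ of $\ut$). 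This needs the graded-poset structure of $(\Delta^+,\curle)$ and the identification of $\eus T$ with the minimal elements of $(\Delta(\n),\curle)$ from Section~\ref{subs:optim}; the rest is the elementary $\GR{A}{p}$ computation of $\supp(\beta_i-\ap)$, which I would state but not belabour.
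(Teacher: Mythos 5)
Your overall strategy is the same as the paper's: by Proposition~\ref{thm:F-&-sgp}{\sf (i)}, the $\be$-ideal $\cF(\n)$ generated by $\n^\bxi$ equals $\n$ if and only if each minimal element $\ap\in\eus T$ of $(\Delta(\n),\curle)$ already lies in $\Delta(\n^\bxi)$; then one unwinds this membership using Theorem~\ref{thm:cascade-point-stab}{\sf (ii)}. Your first reduction and the type-$\GR{A}{p}$ computation of $\supp(\beta_i-\ap)$ are both correct.

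However, there is a genuine gap in your case analysis. You claim that ``$\ap\notin\eus K$ \ldots\ forces $\g\lg i\rg$ to be of type $\GR{A}{p}$ with $p\ge 2$ and $\Phi(\beta_i)=\{\ap,\ap'\}$.'' This is false. The relevant fact in Section~\ref{subs:MCP} is only that $\#\Phi(\beta_i)=2$ \emph{if and only if} $\g\lg i\rg$ is of type $\GR{A}{p}$, $p\ge 2$; it does not say that $\ap\notin\eus K$ forces $\#\Phi(\Phi^{-1}(\ap))=2$. There is a third case: $\ap\notin\eus K$, $\ap\ne\beta_i=\Phi^{-1}(\ap)$, but $\Phi(\beta_i)=\{\ap\}$ is still a singleton — this happens exactly when $\g\lg i\rg$ is not of type $\GR{A}$, e.g. $\GR{G}{2}$ with $\ap=\ap_2$ and $\beta_i=\theta=3\ap_1+2\ap_2$, or $\GR{B}{n}$ with $\ap=\ap_2$ and $\beta_1=\theta=\esi_1+\esi_2$. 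Since this case is not covered by either clause of the theorem's second condition, you must show that $\ap\notin\Delta(\n^\bxi)$ here, so that the theorem still holds. The paper handles this directly: when $\Phi(\beta_i)=\{\ap\}\subset\eus T$, the whole Heisenberg algebra $\h_i$ lies in $\n$ (every $\gamma\in\eus H_{\beta_i}$ has $[\gamma:\ap]>0$ because $\beta_i$ is proportional to the fundamental weight dual to $\ap$ for $\g\lg i\rg$), so $\eus C_\n(i)=\varnothing$ and Theorem~\ref{thm:cascade-point-stab}{\sf (ii)} gives $\Delta(\h_i\cap\n^\bxi)=\{\beta_i\}\not\ni\ap$. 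Without this case your equivalence is unproved; with it, your proof matches the paper's.
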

\begin{proof}
 Since $\cF(\n)$ is the $\be$-ideal generated by $\n^\bxi$ (Prop.~\ref{thm:F-&-sgp}), it is clear that
 $\cF(\n)=\n$ if and only if $\ap\in\Delta(\n^\bxi)$ for each $\ap\in \eus T$.
 
(1) \ If $\ap\in \eus T\cap\eus K$, then $\Phi(\ap)=\ap$ and $\ap\in \Delta(\tilde\n^\bxi)\subset \Delta(\n^\bxi)$.
 
(2) \ If $\ap\in \eus T\setminus \eus K$ and $\Phi^{-1}(\ap)=\beta_j\in \eus K_{\eus T}$, then 
$\ap\ne\beta_j$ and there are two possibilities.
 
{\bf --} \ $\Phi(\beta_j)=\ap$. Then the whole Heisenberg algebra $\h_{j}$ belongs to $\n$ and hence 
$\eus C_\n(j)=\varnothing$. Then it follows from Theorem~\ref{thm:cascade-point-stab}{\sf (ii)} that 
$\Delta(\h_j\cap \n^\bxi)=\{\beta_j\}$, i.e., $\ap\not\in \Delta(\n^\bxi)$.

{\bf --} \ $\Phi(\beta_j)=\{\ap, \ap'\}$. Here $\beta_j$ is the highest root in a root system of type 
$\GR{A}{p}$ ($p\ge 2$). Therefore, if $C=\{\ap=\ap_1,\ap_2,\dots,\ap_p=\ap'\}$ is the chain connecting 
$\ap$ and $\ap'$, then $\beta_j=\ap_1+\ap_2+\dots +\ap_p$. Then $\ap\in\Delta(\n^\bxi)$ if and only if 
$\beta_j-\ap=\ap_2+\dots+\ap_p\not\in\Delta(\n)$, and this is only possible if 
$\supp(\beta_j)\cap {\eus T}=C\cap {\eus T}=\{\ap\}$.
\end{proof}

Using Theorem~\ref{thm:F(n)=n}, one readily obtains the list of all quasi-quadratic nilradicals.

\begin{prop}  \label{list:quasi-quadr}  
The quasi-quadratic nilradicals are as follows.
\begin{enumerate}
\item \  If\/ $\g$ is not of type $\GR{A}{n},\GR{D}{2n+1},\text{ and } \GR{E}{6}$, then
 $\cF(\n_{\eus T})=\n_{\eus T}$ if and only if\/ ${\eus T}\subset \eus K$;
  
\item \ If\/ $\g$ is of type $\GR{A}{n}$, then $\cF(\n_{\eus T})=\n_{\eus T}$ if and only if\/ ${\eus T}=\{\ap_i\}$, $i=1,2,\dots,n$;

\item \ If\/ $\g$ is of type $\GR{D}{2n+1}$, then $\cF(\n_{\eus T})=\n_{\eus T}$ if and only if\/ 
${\eus T}\cap\{\ap_2,\ap_4,\dots,\ap_{2n-2}\}=\varnothing$ and 
$\#({\eus T}\cap\{\ap_{2n-1},\ap_{2n},\ap_{2n+1}\})\le1$;

\item \ If\/ $\g$ is of type\/ $\GR{E}{6}$, then $\cF(\n_{\eus T})=\n_{\eus T}$ if and only if\/ 
${\eus T}=\{\ap_i\}$, $i\ne 6$.
\end{enumerate}
\end{prop}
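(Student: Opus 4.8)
The plan is to apply Theorem~\ref{thm:F(n)=n} type by type, using the explicit formulae for $\eus K$ and the maps $\Phi$, $\Phi^{-1}$ collected in Appendix~\ref{sect:tables}. Recall that $\cF(\n_{\eus T})=\n_{\eus T}$ iff every $\ap\in\eus T$ is either a cascade root or the ``short end'' $\ap$ of a pair $\Phi(\Phi^{-1}(\ap))=\{\ap,\ap'\}$ whose connecting chain $C$ meets $\eus T$ only in $\ap$. So everything reduces to understanding, for each $\g$, which simple roots lie in $\eus K$ and which simple roots belong to a two-element set $\Phi(\beta_j)$ together with the geometry of the connecting chain.

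First I would dispose of case (1): when $\g\notin\{\GR{A}{n},\GR{D}{2n+1},\GR{E}{6}\}$, Section~\ref{subs:MCP} tells us $\#\eus K=\rk\g$ and $\Phi^{-1}$ is a \emph{bijection} $\eus K\to\Pi$, so every $\Phi(\beta_j)$ is a singleton and the second alternative in Theorem~\ref{thm:F(n)=n} never occurs. Hence $\cF(\n_{\eus T})=\n_{\eus T}$ iff $\eus T\subset\eus K$. For case (2), $\g=\GR{A}{n}=\slno$: by Example~\ref{ex:sln}(1), $\eus K=\{\beta_1,\dots,\beta_t\}$ with $\beta_i=\ap_i+\dots+\ap_{n+1-i}$, so no simple root is a cascade root once $n\ge 2$ (for $n=1$ the single simple root \emph{is} $\beta_1$, consistent with the claim), and $\Phi(\beta_i)=\{\ap_i,\ap_{n+1-i}\}$ with connecting chain $C=\{\ap_i,\ap_{i+1},\dots,\ap_{n+1-i}\}$, which is all of $\supp(\beta_i)$. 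If $\eus T$ contains two simple roots $\ap_a\prec\ap_b$ (say $a<b$), then either they lie in a common such chain $C$ — forcing $C\cap\eus T\supsetneq\{\ap_a\}$ — or after reindexing one checks directly that the needed chain condition fails; conversely a singleton $\eus T=\{\ap_i\}$ always satisfies it, because $C\cap\{\ap_i\}=\{\ap_i\}$. So $\cF=\n$ exactly for the $n$ maximal parabolics.

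Next, case (4), $\g=\GR{E}{6}$: from the table, $\eus K=\{\beta_1,\beta_2,\beta_3,\beta_4\}$ with $\Phi(\beta_1)=\{\ap_6\}$ but $\Phi(\beta_2)=\{\ap_1,\ap_5\}$ a two-element set (the other descendants of $\beta_2$ being of type $\GR{A}{1}$), and one records which $\ap_i$ are cascade roots and which belong to the $\GR{A}{\bullet}$-chains. Going through the five simple roots $\ne\ap_6$ and the one equal to $\ap_6$, the chain condition for a singleton $\{\ap_i\}$ is automatic, while any $\eus T$ with $\ge 2$ elements or containing the ``wrong'' root fails; the upshot is $\cF(\n_{\eus T})=\n_{\eus T}$ iff $\eus T=\{\ap_i\}$ with $i\ne 6$ (for $i=6$, $\ap_6=\Phi(\beta_1)$ is a singleton-image whose Heisenberg algebra $\h_1$ sits inside $\n_{\{\ap_6\}}$, so $\eus C_\n(1)=\varnothing$ and $\ap_6\notin\Delta(\n^\bxi)$ by Theorem~\ref{thm:cascade-point-stab}, exactly as in the first bullet of part~(2) of that theorem's proof).

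The main obstacle, and the only genuinely fiddly case, is (3), $\g=\GR{D}{2n+1}=\mathfrak{so}_{4n+2}$. Here $\#\eus K<\rk\g$, and from the table the cascade is $\beta_i=\esi_i+\esi_{i+1}$ for $i$ odd (together with the two ``fork'' roots at the end), so the even-numbered simple roots $\ap_2,\ap_4,\dots,\ap_{2n-2}$ are precisely the ones trapped strictly inside a Heisenberg chain with a \emph{singleton} $\Phi$-image: for such an $\ap_{2k}$ we have $\Phi(\Phi^{-1}(\ap_{2k}))$ a singleton $\ne\{\ap_{2k}\}$, so neither alternative of Theorem~\ref{thm:F(n)=n} can hold and $\ap_{2k}\in\eus T$ already forces $\cF(\n_{\eus T})\ne\n_{\eus T}$ — this gives the condition $\eus T\cap\{\ap_2,\ap_4,\dots,\ap_{2n-2}\}=\varnothing$. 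The three tail roots $\ap_{2n-1},\ap_{2n},\ap_{2n+1}$ play the role of case~(1)-type cascade roots (or cascade-chain endpoints), and any two of them lying in $\eus T$ violate the chain condition because they sit in a common short chain through the fork; hence at most one of them may appear, giving $\#(\eus T\cap\{\ap_{2n-1},\ap_{2n},\ap_{2n+1}\})\le 1$. Conversely, a $\eus T$ obeying both restrictions consists of odd-indexed simple roots $\ap_i\in\eus K$ (automatically fine) plus at most one tail root, and one verifies the chain condition holds for each; the only care needed is to check that the odd-indexed roots really are cascade roots and that the two admissible ``short-chain'' configurations at the fork satisfy $C\cap\eus T=\{\ap\}$. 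I would write this out with the explicit root formulae from the appendix, treating the small-rank anchor cases ($\GR{D}{3}=\GR{A}{3}$, $\GR{D}{5}$) separately if the general pattern degenerates.
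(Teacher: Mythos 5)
Your overall strategy matches the paper's: apply Theorem~\ref{thm:F(n)=n} type by type using the appendix data on $\eus K$ and $\Phi$. Cases (1), (3), (4) follow essentially the same route as the paper (for (3) you leave some of the chain verification as a sketch, but the structural points are all identified correctly).

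For case (2), however, the paper uses a noticeably slicker argument that sidesteps Theorem~\ref{thm:F(n)=n} entirely: since every nilradical in $\slno$ has a commutative polarisation, $\cF(\n)$ is abelian by Section~\ref{subs:role}(1); so $\cF(\n)=\n$ forces $\n$ itself to be abelian, which by Lemma~\ref{lm:easy-Z-grad} and $[\theta:\ap]=1$ for all $\ap\in\Pi$ means $\eus T=\{\ap_i\}$. Your direct verification of the chain condition does work and is a fine alternative, but it is more laborious (you must argue that any two simple roots $\ap_a,\ap_b$ in $\eus T$ land in a common connecting chain, case-splitting on whether $b\le n+1-a$ or not). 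Note also a small factual error in your case (2): you assert that ``no simple root is a cascade root once $n\ge 2$'', but for $n$ \emph{odd} the middle simple root $\ap_{(n+1)/2}=\beta_{(n+1)/2}$ \emph{is} a cascade root (e.g.\ $\ap_2\in\eus K$ for $\GR{A}{3}$). This does not affect the final answer—the singleton $\{\ap_{(n+1)/2}\}$ is still admissible, now via the first alternative of Theorem~\ref{thm:F(n)=n} rather than the second—but it should be corrected.
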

\begin{proof}
(1) In this case $\Phi^{-1}$ is a bijection, hence $\Phi(\Phi^{-1}(\ap))=\{\ap\}$ for any $\ap\in\Pi$.

(2) Here each $\n$ has a \CP~\cite{ag03}, hence $\cF(\n)$ is abelian. That is, $\cF(\n)=\n$ must be an abelian nilradical.

(3) In this case, $\Pi\cap\eus K=\{\ap_1,\ap_3,\dots,\ap_{2n-1}\}$ and there is a unique $\beta\in\eus K$
such that $\#\Phi(\beta)=2$. Namely, $\Phi(\beta_{2n-1})=\{\ap_{2n},\ap_{2n+1}\}$, see 
Appendix~\ref{sect:tables}. The chain connecting $\ap_{2n}$ and $\ap_{2n+1}$ in the 
Dynkin diagram is $C=\{\ap_{2n},\ap_{2n-1}, \ap_{2n+1}\}$. Hence the answer.

(4)  Here $\Pi\cap\eus K=\{\ap_3\}=\{\beta_4\}$, $\Phi(\beta_3)=\{\ap_2,\ap_4\}$, $\Phi(\beta_2)=\{\ap_1,\ap_5\}$, see Appendix~\ref{sect:tables}. Since $\ap_1,\ap_2,\ap_3,\ap_4,\ap_5$ form a chain in the Dynkin diagram, the result follows.
\end{proof}

%%%%%%%%%%%%  Section 5  %%%%%%%%%
\section{Generic stabilisers and the Frobenius semiradical in case of $\slno$ or $\spn$}
\label{sect:generic-sl-sp}

\noindent
First, we explicitly describe the nilradicals in $\slno$ or $\spn$ having a generic stabiliser. To state the 
result for $\slno$, we need some notation.  Recall that, for $\slno$,  the poset $\eus K$ is a chain $\beta_1\succ\beta_2\succ\dots\succ\beta_t$, where 
$t=[(n+1)/2]$ and $\Phi(\beta_i)=\{\ap_i,\ap_{n+1-i}\}$. Let $\n=\n_{\eus T}\subset\slno$ be a nilradical. Then $\eus K(\n)=\{\beta_1,\dots,\beta_k\}$ for some $k\le t$, cf. Lemma~\ref{lm:optim}. 
Therefore, ${\eus T}\subset \{\ap_1,\dots,\ap_k,\ap_{n+1-k},\dots, \ap_n\}$ and 
${\eus T}\cap\{\ap_k,\ap_{n+1-k}\}\ne\varnothing$. Set ${\eus T}'=\eus T\cap \{\ap_1,\dots,\ap_{k-1}\}$ and
${\eus T}''=\eus T\cap\{\ap_{n+2-k},\dots, \ap_n\}$.

\begin{thm}    \label{thm:sop_sln}
Suppose that\/ $\g=\slno$ and $\eus K(\n)=\{\beta_1,\dots,\beta_k\}$. Then 
\[
  \text{$\n^\bxi$ is a generic stabiliser $\Leftrightarrow$ $\sigma(\eus T')=\eus T''$, where $\sigma$ is 
  the symmetry of Dynkin diagram $\GR{A}{n}$.} 
\] 
(That is, $\sigma(\ap_j)=\ap_{n+1-j}$.)
In particular, if $\sigma(\eus T)=\eus T$, then $\n_\eus T$ has a generic stabiliser.
\end{thm}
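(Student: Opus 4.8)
The plan is to apply Condition~\ref{utv:restate} directly, using the explicit description of $\Delta(\n^\bxi)$ from Theorem~\ref{thm:cascade-point-stab}{\sf (ii)}. First I would spell out the Heisenberg subsets attached to the cascade roots of $\slno$. For $\beta_j=\esi_j-\esi_{n+2-j}$ with $j\le k$, the Heisenberg subset $\Delta(\h_j)$ consists of the roots $\esi_j-\esi_l$ and $\esi_l-\esi_{n+2-j}$ for $j<l<n+2-j$, together with $\beta_j$ itself. Intersecting with the complement of $\Delta(\n)$ gives $\eus C_\n(j)$: since ${\eus T}\cap\{\ap_j,\dots,\ap_{n+1-j}\}$ can only meet the two ``ends'' $\{\ap_j,\ap_{n+1-j}\}$ of that interval once $j\le k$ is such that $\beta_j$ is still in $\eus K(\n)$ but the inner roots $\ap_{j+1},\dots,\ap_{n-j}$ are not in $\eus T$ (they index $\el_{\eus T}$), one sees that $\esi_j-\esi_l\in\Delta(\n)$ iff $\ap_j\in\eus T$, and $\esi_l-\esi_{n+2-j}\in\Delta(\n)$ iff $\ap_{n+1-j}\in\eus T$. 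Consequently $\beta_j-\gamma$ for $\gamma\in\eus C_\n(j)$ runs over roots of the form $\esi_l-\esi_{n+2-j}$ (present precisely when $\ap_j\notin\eus T$) and $\esi_j-\esi_l$ (present precisely when $\ap_{n+1-j}\notin\eus T$). So $\Delta(\n^\bxi)$ is an explicit, easily drawn set of matrix entries.

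Next I would translate Condition~\ref{utv:restate} into a combinatorial statement about $\eus T'$ and $\eus T''$. A difference of two roots in $\Delta(\n^\bxi)$ that lands in $\Delta(\n)$ must (by the shape just described, and strong orthogonality of the $\beta_j$'s, which rules out cross-terms between different Heisenberg blocks contributing a root) come from within a single block: either $(\esi_j-\esi_l)-(\esi_j-\esi_{l'})=\esi_{l'}-\esi_l$, or $(\esi_{l}-\esi_{n+2-j})-(\esi_{l'}-\esi_{n+2-j})=\esi_l-\esi_{l'}$, or $(\esi_j-\esi_l)-\beta_j=\esi_{n+2-j}-\esi_l$ (not a positive root), or $\beta_j-(\esi_l-\esi_{n+2-j})=\esi_j-\esi_l$. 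The upshot is that a bad difference exists iff in some block $j$ both the ``upper'' family $\{\esi_j-\esi_l\}$ and something forcing $\esi_{l'}-\esi_l\in\Delta(\n)$ coexist; $\esi_{l'}-\esi_l\in\Delta(\n)$ happens exactly when the interval $(l',l)$ contains a node of $\eus T$, i.e. some $\ap_s$ with $l'\le s<l$. Carefully bookkeeping which $l$'s appear in $\Delta(\n^\bxi)$ (those strictly between consecutive elements of $\eus T$, on the appropriate side) one finds: the action has a generic stabiliser iff for every ``gap'' on the left side created by an element $\ap_i\in\eus T'$ there is a matching gap on the right created by $\ap_{n+1-i}$, and vice versa — i.e. iff $\sigma(\eus T')=\eus T''$. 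The displayed ``$\Leftrightarrow$'' is then this equality, and the final clause ``$\sigma(\eus T)=\eus T\Rightarrow$ generic stabiliser'' is immediate, since $\sigma(\eus T)=\eus T$ forces $\sigma(\eus T')=\eus T''$ (note $\sigma$ fixes no $\ap_j$ and swaps $\ap_j\leftrightarrow\ap_{n+1-j}$, and $\eus T'$, $\eus T''$ are the parts of $\eus T$ on the two sides of the symmetry).

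I expect the main obstacle to be the bookkeeping in the middle step: correctly identifying \emph{which} indices $l$ occur among the roots $\esi_j-\esi_l$ and $\esi_l-\esi_{n+2-j}$ in $\Delta(\n^\bxi)$, and then checking that the only obstructions to Condition~\ref{utv:restate} are the ``asymmetric gaps,'' with no spurious obstruction coming from the $\beta_j$ themselves or from pairs of roots in different Heisenberg blocks. Strong orthogonality of $\eus K$ handles the cross-block pairs cleanly (a difference of two such roots is never a root of $\g$ at all, or not of $\n$), and the roots $\beta_j\in\eus K(\n)$ contribute only differences $\beta_j-(\text{something in block }j)$, which are already among the listed roots or are negative; so those cases reduce to the within-block analysis. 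Once the index set is pinned down, the equivalence $\sigma(\eus T')=\eus T''$ drops out by matching left gaps with right gaps. I would present this as: (1) compute $\Delta(\n^\bxi)$ via Theorem~\ref{thm:cascade-point-stab}, (2) reduce Condition~\ref{utv:restate} to a within-block condition using strong orthogonality, (3) reinterpret the within-block condition as the symmetry $\sigma(\eus T')=\eus T''$, (4) deduce the special case $\sigma(\eus T)=\eus T$.
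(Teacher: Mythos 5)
The overall strategy — compute $\Delta(\n^\bxi)$ from Theorem~\ref{thm:cascade-point-stab}{\sf (ii)} and then test Condition~\ref{utv:restate} — is exactly the paper's. But two of your intermediate claims are false in a way that would derail the argument.

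First, your computation of $\eus C_\n(j)$ is incorrect. You assert ``$\esi_j-\esi_l\in\Delta(\n)$ iff $\ap_j\in\eus T$,'' justified by saying the inner roots $\ap_{j+1},\dots,\ap_{n-j}$ are not in $\eus T$. That is false as soon as $j<k$: since $\beta_k\in\eus K(\n)$ and $\Phi(\beta_k)\cap\eus T\ne\varnothing$, at least one of $\ap_k,\ap_{n+1-k}$ lies in $\eus T$, and both lie strictly inside the interval $(\ap_j,\ap_{n+1-j})$. Concretely, in the paper's Example~\ref{ex:sl7_2-6}(1) ($\g=\mathfrak{sl}_7$, $\eus T=\{\ap_2,\ap_6\}$, $j=1$), one has $\ap_1\notin\eus T$ but $\esi_1-\esi_3=\ap_1+\ap_2\in\Delta(\n)$ because of $\ap_2$; so $\eus C_\n(1)=\{\ap_1\}$, not the whole ``upper'' family. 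The right statement is that $\esi_j-\esi_l\in\Delta(\n)$ iff {\it some} $\ap_s$ with $j\le s\le l-1$ lies in $\eus T$, which makes $\eus C_\n(j)$ depend on the full pattern of $\eus T$-nodes and not just the two endpoints; your later bookkeeping inherits this error.

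Second, and more seriously, your claim that strong orthogonality lets you discard cross-block pairs is wrong — and those pairs are exactly the obstruction the theorem is about. Strong orthogonality says $\beta_i\pm\beta_{j}$ is never a root; it says nothing about $(\beta_j-\gamma)-\beta_{j+1}$ for $\gamma\in\eus C_\n(j)$. Indeed, the paper's proof of the ``no generic stabiliser'' direction (step $2^\circ$) takes the cross-block pair $\beta_j-\ap_{n+1-j}\in\Delta(\n^\bxi)$ (block $j$) and $\beta_{j+1}\in\eus K(\n)\subset\Delta(\n^\bxi)$ (block $j+1$); their difference is $\ap_j$, which lies in $\Delta(\n)$ precisely when $\ap_j\in\eus T'$ but $\ap_{n+1-j}\notin\eus T''$. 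In the Example~\ref{ex:sl7_2-6} computation the witnessing bad pair is $[2,6]$ (from block $1$) against $\ap_2$ (from block $2$), again cross-block. So your parenthetical ``the roots $\beta_j\in\eus K(\n)$ contribute only differences $\beta_j-(\text{something in block }j)$'' overlooks exactly the differences $\beta_{j+1}-(\text{something in block }j)$ and their negatives. Since your argument dismisses the entire mechanism that produces the obstruction, the proof of the equivalence ``$\Leftrightarrow\sigma(\eus T')=\eus T''$'' would not go through; you would need to isolate the cross-block pairs and check them directly, as the paper's $2^\circ$ and $3^\circ$ do (with the additional care in case (b) that differences between $\Gamma_j$ and $\tilde\Gamma$ can be roots of $\g$ lying in a Heisenberg subset but not in $\Delta(\n)$).
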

\begin{proof}
{\sf 1$^o$.}  For $k=1$, we have $\eus T'=\eus T''=\varnothing$ and $\n\subset \h_1$. If $\eus T=\{\ap_1,\ap_n\}$,
then $\n=\h_1$ is optimal. If $\eus T=\{\ap_1\}$, then $\n$ is abelian. In both cases, there is a generic 
stabiliser for $(\n:\n^*)$, as required.

{\sf 2$^o$.} Suppose that $k\ge 2$ and the symmetry of ${\eus T}'$ and ${\eus T}''$ fails for some $j\le k-1$. W.l.o.g, we may assume that 
$\ap_j\in {\eus T}'$, whereas $\ap_{n+1-j}\not\in {\eus T}''$.
Then $\beta_j-\ap_{n+1-j}=\beta_{j+1}+\ap_j\in \Delta(\n^\bxi)$. Since $\beta_{j+1}\in \eus K(\n)\subset
\Delta(\n^\bxi)$ and $\ap_j\in \Delta(\n)$,  Condition~\ref{utv:restate} is not satisfied.
 
{\sf 3$^o$.} If $\sigma (\eus T')=\eus T''$, then one can directly describe $\n^\bxi$ and see 
that~\ref{utv:restate} is satisfied. It is necessary to distinguish two cases:
{\bf (a)} $\ap_k, \ap_{n+1-k}\in {\eus T}$, {\bf (b)} only $\ap_k\in {\eus T}$.

{\bf (a)} Here $\sigma(\eus T)=\eus T$ and $\Phi(\beta_k)\subset\Delta(\n)$. Hence $\h_k\subset\n$ and 
$\eus C_\n(k)=\varnothing$.  
Theorem~\ref{thm:cascade-point-stab}{\sf (ii)} shows that
$\n^\bxi\subset \n_{\{\ap_k\}}\cap\n_{\{\ap_{n+1-k}\}}$, the last intersection being  
the north-east $k\times k$ square of $(n+1)\times (n+1)$ matrices in $\slno$. More precisely, if
${\eus T}'\cup\{\ap_k\}=\{\ap_{i_1}, \ap_{i_2},\dots,\ap_{i_j}=\ap_k\}$, then 
$\Delta(\n^\bxi)=\Gamma_1\cup\dots \cup \Gamma_j$, where
\begin{gather*}
\Gamma_1=\{\esi_i-\esi_j \mid 1\le i\le i_1, \ n+2-i_1\le j\le n+1\} , \\ 
\Gamma_2=\{\esi_i-\esi_j \mid i_1+1\le i\le i_2, \ n+2-i_2\le j\le n+1-i_1\}, \text{etc.}
\end{gather*}
Here $\{\Gamma_s\}_{s=1}^j$ is a string of square blocks located along the antidiagonal in the 
$k\times k$ square, see Figure~\ref{fig:cases-a&b}(a). The size of the $s$-th square is $i_s-i_{s-1}$, 
where $i_0=0$. It is readily seen that if $\gamma$ and $\gamma'$ belong to different blocks, then 
$\gamma-\gamma'$ is not a root; while if $\gamma$ and $\gamma'$ belong to the same block, then 
$\gamma-\gamma'$ is either not a root or a root of the standard Levi subalgebra corresponding to $\n$. 
Thus, $\gamma-\gamma'\not\in\Delta(\n)$ and~\ref{utv:restate} is satisfied.

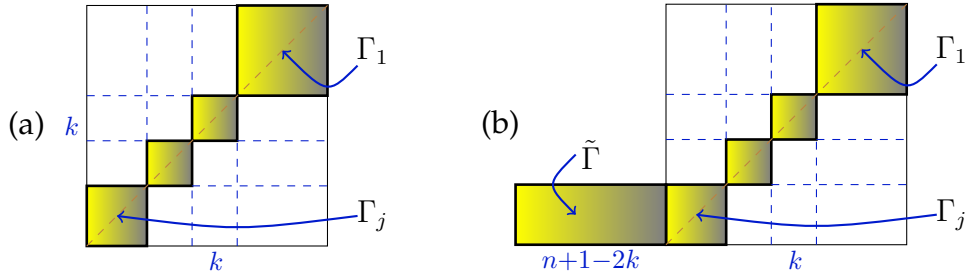
\begin{figure}[htb]    
\label{fig:a&b}
\begin{center}  
\begin{tikzpicture}[scale= .40]
\draw (0,0)  rectangle (8,8);

\shade[left color=yellow,right color=gray, draw,line width=1pt] (5,5) -- (5,8) -- (8,8) -- (8,5)--cycle ;
\shade[left color=yellow,right color=gray, draw,line width=1pt] (3.5,3.5) -- (3.5,5) -- (5,5) -- (5,3.5)--cycle ;
\shade[left color=yellow,right color=gray, draw,line width=1pt] (2,2) -- (2,3.5) -- (3.5,3.5) -- (3.5,2)--cycle ;
\shade[left color=yellow,right color=gray, draw,line width=1pt] (0,0) -- (0,2) -- (2,2) -- (2,0)--cycle ;

\draw[dashed,darkblue]  (5,0) -- (5,3.5) ;
\draw[dashed,darkblue]  (3.5,0) -- (3.5,2) ;
\draw[dashed,darkblue]  (3.5,5) -- (3.5,8) ;
\draw[dashed,darkblue]  (2,3.5) -- (2,8) ;

\draw[dashed,darkblue]  (0,5) -- (3.5,5) ;
\draw[dashed,darkblue]  (0,3.5) -- (2,3.5) ;
\draw[dashed,darkblue]  (5,3.5) -- (8,3.5) ;
\draw[dashed,darkblue]  (3.5,2) -- (8,2) ;

\draw (9.5,6.5)  node {$\Gamma_1$} ;
\draw[->,thick,darkblue]   (9,6) .. controls (8,5) .. (6.5,6.5);

\draw (9.5,1)  node {$\Gamma_j$} ;
\draw[->,thick,darkblue]   (9,1) .. controls (5,.5) .. (1,1);

\draw (-2,4)  node {(a)} ;
\draw (-0.5,4)  node {\footnotesize {\color{darkblue}$k$}} ;
\draw (4.3,-0.5)  node {\footnotesize {\color{darkblue}$k$}} ;
\draw[dashed,brown]  (0,0) -- (8,8) ;
\end{tikzpicture}
\qquad
%%%%%%%   case (b) %%%%%%%
\begin{tikzpicture}[scale= .40]
\draw (0,0)  rectangle (8,8);

\shade[left color=yellow,right color=gray, draw,line width=1pt] (5,5) -- (5,8) -- (8,8) -- (8,5)--cycle ;
\shade[left color=yellow,right color=gray, draw,line width=1pt] (3.5,3.5) -- (3.5,5) -- (5,5) -- (5,3.5)--cycle ;
\shade[left color=yellow,right color=gray, draw,line width=1pt] (2,2) -- (2,3.5) -- (3.5,3.5) -- (3.5,2)--cycle ;
\shade[left color=yellow,right color=gray, draw,line width=1pt] (0,0) -- (0,2) -- (2,2) -- (2,0)--cycle ;

\shade[left color=yellow,right color=gray, draw,line width=1pt] (-5,0) -- (0,0) -- (0,2) -- (-5,2)--cycle ;

\draw[dashed,darkblue]  (5,0) -- (5,3.5) ;
\draw[dashed,darkblue]  (3.5,0) -- (3.5,2) ;
\draw[dashed,darkblue]  (3.5,5) -- (3.5,8) ;
\draw[dashed,darkblue]  (2,3.5) -- (2,8) ;

\draw[dashed,darkblue]  (0,5) -- (3.5,5) ;
\draw[dashed,darkblue]  (0,3.5) -- (2,3.5) ;
\draw[dashed,darkblue]  (5,3.5) -- (8,3.5) ;
\draw[dashed,darkblue]  (3.5,2) -- (8,2) ;

\draw (9.5,6.5)  node {$\Gamma_1$} ;
\draw[->,thick,darkblue]   (9,6) .. controls (8,5) .. (6.5,6.5);

\draw (9.5,1)  node {$\Gamma_j$} ;
\draw[->,thick,darkblue]   (9,1) .. controls (5,.5) .. (1,1);

\draw (-2.5,3)  node {$\tilde\Gamma$} ;
\draw[->,thick,darkblue]   (-3,3) .. controls (-4,2) .. (-3,1);

\draw (4.3,-0.5)  node {\footnotesize {\color{darkblue}$k$}} ;
\draw (-2.5,-0.5)  node {\footnotesize {\color{darkblue}$n{+}1{-}2k$}} ;
\draw[dashed,brown]  (0,0) -- (8,8) ;
\draw (-5.5,4)  node {(b)} ;
\end{tikzpicture}
\caption{The generic stabiliser $\n^\bxi$: cases (a) and (b)}
\label{fig:cases-a&b}
\end{center}
\end{figure}

{\bf (b)} Here $\n$ is smaller than in part {\bf (a)}, but $\eus K(\n)$ remains the same. Now only 
``half'' of $\h_k$ belongs to $\n$. Therefore, $\eus C_\n(k)\ne \varnothing$, and the subsets 
$\eus C_\n(s)$ with $i_{j-1}< s< k$ become larger than in {\bf (a)}. Hence $\Delta(\n^\bxi)$ becomes larger and, along with $\Gamma_1\cup\dots \cup \Gamma_j$, it also contains the strip of roots  
\\[.6ex]
\centerline{$\tilde\Gamma=\{\esi_i-\esi_j\mid i_{j-1}+1\le i\le i_j=k, \ k+1\le j\le n+1-k\}$}
\\[.6ex] 
attached to $\Gamma_j$, see Figure~\ref{fig:cases-a&b}(b). 
The new feature is that the difference of roots from $\Gamma_j$ and $\tilde\Gamma$ can be a root in 
$\bigcup_{s=i_{j-1}}^k \Delta(\h_s)$. But such a difference belongs to the parts of sets $\Delta(\h_s)$ 
that are missing in $\Delta(\n)$. Thus,~\ref{utv:restate} is still satisfied here.
\end{proof}

The symmetry condition of Theorem~\ref{thm:sop_sln} means that the matrix shape of 
$\n\subset\slno$ must be `almost' symmetric w.r.t. the antidiagonal. That is, the symmetry may only fail 
in case {\bf (b)} for roots in $\Delta(\h_s)$ with $i_{j-1}< s \le i_j=k$ (if 
$\ap_k\in {\eus T}$, but $\ap_{n+1-k}\not\in {\eus T}$).
\begin{rmk}   \label{rem:podschet-An}
Using Theorem~\ref{thm:sop_sln}, one easily computes the number of nontrivial (standard) nilradicals 
with generic stabilisers. For $\GR{A}{2n-1}$, it is $2^{n+1}-3$; 
for $\GR{A}{2n}$, it is $3(2^{n}-1)$. Hence the ratio 
$\#\{\text{nilradicals with generic stabiliser}\}/\#\{\text {all nilradicals}\}$ exponentially decreases.
\end{rmk}
\begin{thm}    
\label{thm:sop_spn}
If\/ $\g=\spn$, then a generic stabiliser exists for every nilradical\/ $\n$.  
\end{thm}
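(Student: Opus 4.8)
The strategy is to reduce the claim for an arbitrary nilradical $\n=\n_{\eus T}\subset\spn$ to the optimal case via optimisation, and then to verify Condition~\ref{utv:restate} directly using the explicit matrix model of Example~\ref{ex:spn}. First I would recall that for $\spn$ the cascade $\eus K=\{\beta_1,\dots,\beta_n\}$ is a chain with $\beta_i=2\esi_i$ and $\Phi(\beta_i)=\{\ap_i\}$, so every node of the Dynkin diagram is $\Phi^{-1}$ of a distinct cascade element. By Lemma~\ref{lm:optim}, $\eus K(\n)=\{\beta_1,\dots,\beta_k\}$ for some $k$, and since each $\Phi(\beta_i)$ is a singleton, the optimisation $\tilde{\eus T}=\bigcup_{i\le k}\Phi(\beta_i)=\{\ap_1,\dots,\ap_k\}$. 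Thus $\tilde\n=\napn$-type data: $\tilde\n=\h_1\oplus\dots\oplus\h_k$, and by Proposition~\ref{prop:optim-sgp} the optimal nilradical $\tilde\n$ always has the generic stabiliser $\tilde\n^{\tilde\bxi}=\bigoplus_{i\le k}\g_{\beta_i}$. The task is then to show that adding back the ``missing'' root spaces $\eus C_\n(j)$ does not spoil genericity, i.e.\ that Condition~\ref{utv:restate} still holds for $\n^\bxi$ as computed by Theorem~\ref{thm:cascade-point-stab}.

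Next I would make the computation of $\Delta(\n^\bxi)$ fully explicit in the $\esi$-coordinates. Using the embedding $\spn\subset\sltn$ from Example~\ref{ex:spn}, the positive roots are $\esi_i\pm\esi_j$ ($i<j$) together with $2\esi_i$; the Heisenberg set $\eus H_{\beta_i}=\Delta(\h_i)$ consists of $2\esi_i$ and all $\esi_i\pm\esi_j$ with $j>i$. For $\gamma\in\eus C_\n(j)$, i.e.\ $\gamma\in\Delta(\h_j)\setminus\Delta(\n)$ with $\gamma\ne\beta_j$, Theorem~\ref{thm:cascade-point-stab}{\sf (ii)} contributes the root $\beta_j-\gamma=2\esi_j-\gamma$ to $\Delta(\n^\bxi)$: if $\gamma=\esi_j-\esi_\ell$ ($\ell>j$) this is $\esi_j+\esi_\ell$, and if $\gamma=\esi_j+\esi_\ell$ this is $\esi_j-\esi_\ell$. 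So $\Delta(\n^\bxi)$ consists of the cascade roots $2\esi_1,\dots,2\esi_k$ together with certain roots $\esi_j\pm\esi_\ell$ with $1\le j\le k$. I would then take two arbitrary elements $\delta,\delta'\in\Delta(\n^\bxi)$ and check that $\delta-\delta'\notin\Delta(\n)$, splitting into cases according to the types ($2\esi_\bullet$ versus $\esi_\bullet\pm\esi_\bullet$): the difference $2\esi_i-2\esi_j$ is never a root; a difference of the form $(\esi_j\pm\esi_\ell)-(\esi_{j'}\pm\esi_{\ell'})$ either fails to be a root, or is a root whose support lies inside the Levi part $\Pi\setminus\eus T$ — the key point being that both $\delta$ and $\delta'$ only involve indices $\le k$, and the arithmetic forces the difference (when a root at all) to be built from simple roots $\ap_i$ with $i$ in a range where $\ap_i\notin\eus T$, precisely because the root $\gamma$ defining $\delta$ was chosen \emph{outside} $\Delta(\n)$.

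\textbf{Main obstacle.} The delicate step is the final case analysis showing $\delta-\delta'\notin\Delta(\n)$: one must track exactly which simple roots appear in each $\delta$ and confirm that the ``missing'' directions $\eus C_\n(j)$ interact only among themselves and with the Levi, never producing an element of $\Delta(\n)=\Delta^+\setminus\Delta^+_{\eus T}$. This is conceptually the same obstacle that made case {\bf (b)} the subtle part of the proof of Theorem~\ref{thm:sop_sln}, and indeed I expect the cleanest writeup to mirror that argument: a root $\esi_i\pm\esi_j\in\Delta(\n^\bxi)$ arises only when the relevant $\esi_i\mp\esi_j$ (or $2\esi_i$-truncation) was absent from $\Delta(\n)$, and differences land in those same absent strips. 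An alternative, possibly shorter route is to invoke that every nilradical of $\spn$ has a \CP~\cite{ag03}, hence $\cF(\n)$ is abelian (Proposition~\ref{prop:IT-proofs}(2)); combined with the explicit formula for $\n^\bxi$ one checks $[\n^\bxi,\n^\bxi]=0$ directly, which already rules out the only obstruction in Condition~\ref{utv:restate} coming from self-brackets, but one still needs the sharper statement that $[\n,\n^\bxi]\cap\n^\bxi=0$, so the coordinate computation seems unavoidable. I would therefore present the proof through the explicit description of $\Delta(\n^\bxi)$ and the case check, referencing Figure~\ref{fig:cases-a&b} by analogy.
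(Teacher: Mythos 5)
Your plan follows the same overall route as the paper: compute $\n^\bxi$ explicitly via Theorem~\ref{thm:cascade-point-stab}, work in the matrix model of Example~\ref{ex:spn}, and verify Condition~\ref{utv:restate}. What you miss is the structural observation that makes the verification painless, and because of that your ``main obstacle'' paragraph aims at the wrong analogy.

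The key fact (which the paper states up front) is that the matrix shape of \emph{any} standard nilradical of $\spn$ is automatically symmetric with respect to the antidiagonal. Concretely: since $\Phi(\beta_k)=\{\ap_k\}$ and $\ap_k\in\eus T$, every $\gamma\in\Delta(\h_j)$ ($j\le k$) of the form $\esi_j+\esi_\ell$, or of the form $\esi_j-\esi_\ell$ with $\ell>k$, has $\ap_k$ in its support and therefore lies in $\Delta(\n)$; hence it is \emph{excluded} from $\eus C_\n(j)$. So $\eus C_\n(j)$ consists only of roots $\esi_j-\esi_\ell$ with $j<\ell\le k$, and $\Delta(\n^\bxi)$ consists only of roots $\esi_a+\esi_c$ with $a,c\le k$. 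In other words $\n^\bxi$ sits in the north-east $k\times k$ square of the abelian nilradical $\napn$, arranged as antidiagonal-symmetric blocks. This places $\spn$ squarely in case {\bf (a)} of the proof of Theorem~\ref{thm:sop_sln} (the easy case), not case {\bf (b)} as you suggest: there is never a ``tilted strip'' $\tilde\Gamma$ to worry about, and the check that $\delta-\delta'\notin\Delta(\n)$ is the two-line block argument (different blocks give non-roots; same block gives Levi roots). Your sketch at one point even asserts ``both $\delta$ and $\delta'$ only involve indices $\le k$'', which is exactly the right conclusion, but earlier you leave open the possibility that $\Delta(\n^\bxi)$ contains roots $\esi_j\pm\esi_\ell$ with $\ell$ arbitrary — that is where the needed narrowing gets lost. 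The coordinate computation you propose would still work if carried through carefully, but it reconstructs by hand what the antidiagonal symmetry gives for free, and, by anticipating case {\bf (b)} subtlety where there is none, it makes the argument look harder than it is.
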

\begin{proof}  For an appropriate choice of a skew-symmetric bilinear form defining $\spn\subset\sltn$,
a Borel subalgebra of $\spn$, $\be(\spn)$, is the set of symplectic upper-triangular matrices, see
Example~\ref{ex:spn}. Then the matrix shape of {\bf any} (standard) nilradical in $\spn$ is symmetric w.r.t. 
the antidiagonal, and the approach in the proof of 3$^o${\bf (a)} in Theorem~\ref{thm:sop_sln} applies 
to any nilradical in $\spn$. If $\n=\n_{\eus T}$ and ${\eus T}=\{\ap_{i_1},\dots,\ap_{i_j}\}$, where
$i_1<i_2<\dots < i_j=k$, then $\n^\bxi$ belongs to the north-east $k\times k$ square in the abelian 
nilradical $\napn$. Here the blocks $\Gamma_j$ inside this square represent matrices that are symmetric 
w.r.t. the antidiagonal, cf. Figure~\ref{fig:cases-a&b}(a). 
\end{proof}

For these two series, we explicitly describe $\cF(\n)$ for any $\n$. This also demonstrates the role of commutative polarisations.

\subsection{The Frobenius semiradical $\cF(\n)$ for $\g=\slno$}
\label{subs:frob-semi-sl}

Let $\n=\n_{\eus T}$ be a nilradical such that $\min\eus K(\n)=\{\beta_k\}$. Then 
$\eus K(\n)=\{\beta_1,\dots,\beta_k\}$, ${\eus T}\subset\{\ap_1,\dots,\ap_k,\ap_{n+1-k},\dots,\ap_n\}$, and 
$\Phi(\beta_k)\cap {\eus T}\ne\varnothing$. We may assume that $\ap_k\in {\eus T}$ and then, as in the proof of Theorem~\ref{thm:sop_sln}, there are two possibilities. 

{\bf (a)} $\ap_{n+1-k}\in {\eus T}$. Then $\n_{\{\ap_k\}}$ and $\n_{\{\ap_{n+1-k}\}}$ are \CP-ideals of 
$\n$~(\cite[Example\,3.8]{CP}), and hence $\cF(\n)\subset \n_{\{\ap_k\}}\cap\n_{\{\ap_{n+1-k}\}}$, cf.
Section~\ref{subs:role}(3). Here
$\es:=\n_{\{\ap_k\}}\cap\n_{\{\ap_{n+1-k}\}}$ is the north-east square of size $k$ in $\slno$. On the other
hand, $\n^\bxi\subset \es$ (see Theorem~\ref{thm:sop_sln}) and $\be{\cdot}\n^\bxi=\es$.
Hence $\cF(\n)=\es$ and $\dim\cF(\n)=k^2$, see Figure~\ref{fig:c}. For the special case of $\n=\ut$, the
description of $\cF(\ut)$ is obtained in \cite[Theorem\,4.1]{ooms2}.

\begin{figure}[htb]  
\begin{center}  
\begin{tikzpicture}[scale= .4] %, transform shape]
\draw (0,0)  rectangle (6.5,6.5);

\shade[left color=yellow,right color=gray, draw,line width=1pt] (0,0) -- (0,6.5) -- (6.5,6.5) -- (6.5,0)--cycle ;

\draw[dashed,darkblue]  (5,-5) -- (5,6.5) ;           %% vertical 
\draw[dashed,darkblue]  (3.5,-5) -- (3.5,6.5) ;
\draw[dashed,darkblue]  (1.5,-5) -- (1.5,6.5) ;
\draw[dashed,darkblue]  (-1.5,0) -- (-1.5,6.5) ;
\draw[dashed,darkblue]  (-3.5,0) -- (-3.5,6.5) ;

\draw[dashed,darkblue]  (-5,5) -- (6.5,5) ;             %% horizontal
\draw[dashed,darkblue]  (-5,3.5) -- (6.5,3.5) ;
\draw[dashed,darkblue]  (-5,1.5) -- (6.5,1.5) ;
\draw[dashed,darkblue]  (0,-1.5) -- (6.5,-1.5) ;
\draw[dashed,darkblue]  (0,-3.5) -- (6.5,-3.5) ;

\draw (5.7,5.7)  node {\footnotesize {\color{darkblue}$\beta_1$}} ;
\draw (4.2,4.2)  node {\footnotesize {\color{darkblue}$\beta_2$}} ;
\draw (0.7,0.7)  node {\footnotesize {\color{darkblue}$\beta_k$}} ;
\draw (-4.3,0.7)  node {\footnotesize {\color{darkblue}$\ap_k$}} ;
\draw (-3,-4)  node {\footnotesize {\color{darkblue}$\ap_{n{+}1{-}k}$}} ;
\draw[->,thick,darkblue]   (-3,-3.5) .. controls (-1.1,-1) .. (.7,-4.3);

\draw (2.6,5.8)  node {\footnotesize {$\cdots$}} ;
\draw (2.6,4.05)  node {\footnotesize {$\cdots$}} ;
\draw (2.6,2.3)  node {\footnotesize {$\cdots$}} ;
\draw (2.6,.8)  node {\footnotesize {$\cdots$}} ;

\draw (2.6,-.7)  node {\footnotesize {$\cdots$}} ;
\draw (2.6,-2.45)  node {\footnotesize {$\cdots$}} ;
\draw (2.6,-4.2)  node {\footnotesize {$\cdots$}} ;
\draw (.9,-2.45)  node {\footnotesize {$\cdots$}} ;
\draw (4.3,-2.45)  node {\footnotesize {$\cdots$}} ;
\draw (5.8,-2.45)  node {\footnotesize {$\cdots$}} ;

\draw (-2.4,.8)  node {\footnotesize {$\cdots$}} ;
\draw (-2.4,2.3)  node {\footnotesize {$\cdots$}} ;
\draw (-2.4,4.05)  node {\footnotesize {$\cdots$}} ;
\draw (-2.4,5.8)  node {\footnotesize {$\cdots$}} ;

\draw (8.4,-2.5)  node {\footnotesize {\color{darkblue}$n{+}1{-}2k$}} ;
\draw (7.1,3)  node {\footnotesize {\color{darkblue}$k$}} ;
\draw (3.3,-5.6)  node {\footnotesize {\color{darkblue}$k$}} ;
\draw[dashed,brown]  (-3,-3) -- (6.5,6.5) ;
\draw[dashed,brown]  (-5.5,0.5) -- (0.5,-5.5) ;

\draw[thick,red]  (0,6.5) -- (-5,6.5) -- (-5,0) -- (6.5,0) ;
\draw[thick,forest]   (6.5,0) -- (6.5,-5) -- (0,-5) -- (0,6.5) ;

\end{tikzpicture}
\end{center}
\caption{$\cF(\n)=\n_{\{\ap_k\}}\cap \n_{\{\ap_{n+1-k}\}}$, \ 
$\n\supset\n_{\{\ap_k\}}\cup \n_{\{\ap_{n+1-k}\}}$}
\label{fig:c}
\end{figure}

Actually, here $\ah_j:=\n_{\{\ap_j\}}\cap \n$  is a \CP-ideal of $\n$ for any $j$ such that $k\le j\le n+1-k$, see Example~4.8 in \cite{CP}.

{\bf (b)} $\ap_{n+1-k}\not\in {\eus T}$. Then only $\n_{\{\ap_k\}}$ is a \CP-ideal and 
$\cF(\n)\subset \n_{\{\ap_k\}}$. On the other hand, 
$\supp(\beta_k-\ap_k)=\{\ap_{k+1},\dots,\ap_{n+1-k}\}$. Hence $\beta_k-\ap_k\not\in\Delta(\n)$ and 
$\ap_k\in\Delta(\n^\bxi)$, see Theorem~\ref{thm:cascade-point-stab}{\sf (ii)}. 
By Proposition~\ref{thm:F-&-sgp}{\sf (i)}, this implies 
that $\n_{\{\ap_k\}}\subset \cF(\n)$. Thus, here $\cF(\n)=\n_{\{\ap_k\}}$ and $\dim\cF(\n)=k(n+1-k)$.
Since $\cF(\n)$ appears to be a \CP, we conclude that $\n_{\{\ap_k\}}$ is the only \CP\ for $\n$.

It follows from the descriptions above that for $\g=\slno$, $\cF(\n)$ is always equal to the 
intersection of all \CP-ideals of $\n$. But this is not true for other simple Lie algebras.

\subsection{The Frobenius semiradical $\cF(\n)$ for $\g=\spn$}
\label{subs:frob-semi-sp}

Let $\n=\n_{\eus T}$ be a nilradical such that $\min\eus K(\n)=\{\beta_k\}$. Then 
$\eus K(\n)=\{\beta_1,\dots,\beta_k\}$, ${\eus T}\subset\{\ap_1,\dots,\ap_k\}$ and $\ap_k\in {\eus T}$. The abelian 
nilradical $\napn$ is identified with the space of $n\times n$ matrices that are symmetric w.r.t{.} the 
antidiagonal, see Example~\ref{ex:spn}. Then $\n^\bxi$ belongs to the north-east $k\times k$ 
square in $\napn$, and the south-west corner of this square, $\g_{\beta_k}$,  lies in $\n^\bxi$. 
Hence $\cF(\n)=\be{\cdot}\n^\bxi$ is equal to  this $k$-square
and $\dim \cF(\n)=k(k+1)/2$. In this case, $\ah:=\n_{\{\ap_n\}}\cap \n$  is the only \CP-ideal of $\n$
and the inclusion $\cF(\n)\subset \ah$ is proper unless $k=n$, see the following picture.

\begin{center}  
\begin{tikzpicture}[scale= .35] 
\draw (-7,0)  rectangle (-2,5);

\draw[dashed,brown]  (-7,0) -- (-2,5) ;
\draw[dashed,darkblue]  (-5,0) -- (-5,5) ;
\draw[dashed,darkblue]  (-7,2) -- (-2,2) ;

\shade[top color=yellow,bottom color=gray, draw, line width=1pt] (-5,2) -- (-2,2) -- (-2,5) -- (-5,5)--cycle ;
\draw (-1, 2.4) node {$\subset$};
\draw (-9.1, 2.4) node {$\cF(\n)=$};

%%%%  second square  %%%%%%%%
\draw (0,0)  rectangle (5,5);

\shade[top color=yellow,bottom color=gray, draw, line width=1pt] (2,0) -- (5,0) -- (5,5) -- (0,5) -- (0,2) -- (2,2)--cycle ;
\draw[dashed,brown]  (0,0) -- (5,5) ;
\draw[dashed,darkblue]  (2,2) -- (2,5) ;
\draw[dashed,darkblue]  (2,2) -- (5,2) ;

\draw (1.1,-.7)  node {\footnotesize {\color{darkblue}$n{-}k$}} ;
\draw (3.6,-.7)  node {\footnotesize {\color{darkblue}$k$}} ;
\draw (8.3, 2.4) node {$=\ah\subset \napn$};
\end{tikzpicture}
\end{center}

\begin{rmk}     \label{rem:F(n)-abelian}
The series $\GR{A}{n}$ and $\GR{C}{n}$ are easily handled, because $\eus K$ is a chain for them and 
any nilradical has a \CP. 
Therefore, $\cF(\n)$ is an {\bf abelian} ideal of $\n$, and there is a natural upper bound on $\cF(\n)$. 
However, it can happen that $\n$ does not have a \CP, but $\cF(\n)$ is abelian, see 
Example~\ref{ex:metabelian}.
\end{rmk}

%%%%%%%%%%%%  Section 6  %%%%%%%%%
\section{The Poisson centre and $U$-invariants} 
\label{sect:centre-&-factor}

\noindent
Recall that $P=L{\cdot}N$ is a standard parabolic subgroup of $G$ and $\p=\el\oplus\n$. Since $N$ is 
connected, one has $\gS(\n)^N=\gS(\n)^\n$, and this algebra is the centre of the Poisson algebra 
$(\gS(\n),\{\ ,\, \})$. Because $\n$ is a $P$-module, one can consider algebras of invariants in $\gS(\n)$ 
for any subgroup $Q\subset P$. Specifically, we are interested in the groups $U$ and $\tilde N$, where 
$\Lie \tilde N=\tilde\n$ is the optimisation of $\n$. We also wish to compare the algebras of $Q$-invariants 
in $\gS(\n)$ and $\gS(\tilde\n)$. The algebras of interest for us are organised in the following diagram:
\beq   \label{eq:diagr}
    \begin{array}{ccccc}
    \gS(\tilde\n)^U   & \subset  & \gS(\tilde\n)^{\tilde N} & \\       
         \cup                    &               &   \cup         &              & \\
    \gS(\n)^U          & \subset  &\gS(\n)^{\tilde N}& \subset &\gS(\n)^N .
     \end{array}    
\eeq
If an algebra $\gS(\n)^Q=\BC[\n^*]^Q$ is finitely generated, then we also consider 
the associated quotient morphism $\pi_Q: \n^*\to \n^*\md Q:=\spe(\BC[\n^*]^Q)$.

For any nilradical $\n$, one can form the solvable Lie algebra
\[
       \ff_\n=\ff_{\tilde\n}=\te_\n\oplus\tilde\n \subset\be ,
\] 
where $\te_\n=\oplus_{\beta\in\eus K(\n)}[\g_\beta,\g_{-\beta}]\subset\te$. By~\cite[Prop.\,5.1]{CP},
the corresponding connected group $F_\n\subset B$ has an open orbit in $\ff_\n^*$, i.e., $\ff_n$ is a 
Frobenius Lie algebra. For this reason, $\ff_\n$ is called the {\it Frobenius envelope\/} of $\n$. Note that
the unipotent radical of $F_\n$ is $\tilde N$.

\begin{lm}    \label{lm:4.1}
The four algebras of invariants forming the square in \eqref{eq:diagr} are polynomial. In particular, for any 
{\bf optimal} nilradical $\tilde\n$, the Poisson centre $\gS(\tilde\n)^{\tilde N}$ is polynomial.
\end{lm}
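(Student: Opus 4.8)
The plan is to treat the four algebras uniformly. Write $A=\gS(V)^{Q}=\BC[V^*]^{Q}$ with $V\in\{\n,\tilde\n\}$ and $Q\in\{U,\tilde N\}$; in each case $Q$ is a unipotent group acting on $V^*$ and normalised by $T$ (for $V=\n$ one uses that $\n$ is an ideal of $\tilde\n$, hence of $\ut$). Put $H=T{\cdot}Q$. This connected solvable group contains $T\ltimes\tilde N$, and $T\ltimes\tilde N$ already has a dense orbit in $V^*$: since $\ce$ is $T$-stable and $\ce_0=T{\cdot}\bxi$ is dense in $\ce$, one has $(T\ltimes\tilde N){\cdot}\bxi=\tilde N{\cdot}\ce_0$, whose closure is $\tilde N{\cdot}\ce=V^*$ by Theorem~\ref{thm:dense-saturation}{\sf(ii)} (and Proposition~\ref{prop:optim-sgp}{\sf(ii)} when $V=\tilde\n$). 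Hence $\BC(V^*)^{H}=\BC$. As $T$ normalises $Q$, the torus $T$ acts on $A$, giving a weight decomposition $A=\bigoplus_{\lambda\in\mathsf X(T)}A_\lambda$; each $A_\lambda$ is at most one-dimensional, because if $f,g\in A_\lambda\setminus\{0\}$ then $f/g$ is a $Q$-invariant rational function of $T$-weight $0$, hence $H$-invariant, hence constant. Since $A$ is a domain it follows that $A$ is the semigroup algebra $\BC[\Lambda_A]$ of its weight monoid $\Lambda_A=\{\lambda:A_\lambda\ne0\}\subset\mathsf X(T)$. Therefore $A$ is a polynomial algebra precisely when $\Lambda_A$ is a free monoid, and the whole lemma reduces to showing $\Lambda_A\cong\BN^{\#\eus K(\n)}$.

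The rank of $\Lambda_A$ equals $\trdeg\,\mathrm{Frac}(A)$. An upper bound $\rk\Lambda_A\le\#\eus K(\n)$ follows from Rosenlicht's theorem~\cite[Ch.\,1.6]{brion}: $\trdeg\,\mathrm{Frac}(A)\le\trdeg\BC(V^*)^Q=\dim V-d_Q$, where $d_Q$ is the dimension of a generic $Q$-orbit in $V^*$; by Theorem~\ref{thm:dense-saturation}{\sf(iii)} (resp. Proposition~\ref{prop:optim-sgp}{\sf(i)}) a generic $Q$-stabiliser in $V^*$ has dimension $\ind\n=\dim(\tilde\n/\n)+\#\eus K(\n)$ (resp. $\#\eus K(\n)$) by Proposition~\ref{prop:jos}, and in both situations the bound is $\#\eus K(\n)$. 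The matching lower bound, and freeness, come from the cascade. For $\n=\tilde\n=\ut$ this is Joseph's theorem \cite[\S2]{jos77}: fixing a numbering $\beta_1,\dots,\beta_k$ of $\eus K(\n)$ with $\beta_j\prec\beta_i\Rightarrow j>i$, one has $B$-semi-invariants $\Delta_1,\dots,\Delta_k\in\gS(\ut)$ — each a $T$-eigenvector annihilated by $U$, hence by $\tilde N$ — that generate $\gS(\ut)^U$ freely, with $T$-weights $\lambda_j$ forming a triangular system with respect to the cascade: $\lambda_j$ lies in the span of $\{\beta_l\in\eus K(\n):\beta_l\curge\beta_j\}$ and has nonzero $\beta_j$-component. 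Since the cascade roots are strongly orthogonal, hence linearly independent, the $\lambda_j$ are linearly independent, so $\Lambda_A=\bigoplus_j\BN\lambda_j\cong\BN^k$ and $\gS(\ut)^U=\BC[\Delta_1,\dots,\Delta_k]$ is polynomial. For an arbitrary nilradical the same construction is carried out inside $\gS(\n)$ and inside $\gS(\tilde\n)$; using the $B$-equivariant surjection $\tau:\tilde\n^*\to\n^*$, the $T$-stability of $\ce$ and $\overline{\tilde N{\cdot}\ce}=\n^*$ (Theorem~\ref{thm:dense-saturation}), one arranges that these semi-invariants are pulled back from $\n^*$ and are simultaneously $U$- and $\tilde N$-invariant, so that all four weight monoids coincide with $\bigoplus_j\BN\lambda_j$. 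Consequently each of the four algebras is polynomial of Krull dimension $\#\eus K(\n)$; and when $\tilde\n$ is optimal, $\tilde N=N$, so $\gS(\tilde\n)^{\tilde N}=\gZ(\tilde\n)$ is the Poisson centre.

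The principal obstacle is precisely the last step: producing the fundamental semi-invariants $\Delta_j$ and proving that they exhaust $\gS(\n)^U$ — equivalently, that the weight monoid is \emph{exactly} $\BN^k$ and not merely a rank-$k$ submonoid of $\mathsf X(T)$. This is the genuine representation-theoretic input, resting on the Heisenberg decomposition $\Delta^+=\bigsqcup_i\eus H_{\beta_i}$ and an induction on $\rk\g$; the efficient route is to invoke Joseph's analysis for the optimal case and reduce the general case to it along $\tau$, rather than redo it from scratch. A secondary technical point is to verify that these generators transfer correctly between $\n$ and $\tilde\n$ and between the groups $U$ and $\tilde N$, so that the optimal and non-optimal, $U$- and $\tilde N$-versions are all settled at once.
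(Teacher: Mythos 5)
Your framework is sound and is in fact the core of the result the paper cites: a dense orbit of a connected solvable group $H=T{\cdot}Q$ on $V^*$ forces $\dim A_\lambda\le 1$ for the $T$-weight spaces of $A=\gS(V)^Q$, so $A$ is a multiplicity-free $T$-algebra and the lemma reduces to showing that the weight monoid $\Lambda_A$ is free. You also correctly observe that the unified choice $H=T{\cdot}Q$ treats all four corners of the square at once. But the step you yourself flag as ``the principal obstacle'' is a genuine gap, not merely a technicality: freeness of $\Lambda_A$ does not follow from multiplicity-freeness together with your rank estimate (a rank-$k$ sharp submonoid of $\mathsf{X}(T)$ need not be free), and the route you sketch via Joseph-type semi-invariants $\Delta_j$ with triangular weights is never carried out --- producing such $\Delta_j$ inside $\gS(\n)$ rather than $\gS(\ut)$, and proving they \emph{exhaust} $\gS(\n)^U$ as opposed to generating some rank-$k$ subring, is precisely the nontrivial content. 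As written, the proof stops one step short.

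The clean way to close the gap, and what \cite[Theorem\,4.4]{p17} actually proves, avoids explicit generators entirely: since $\gS(V)=\BC[V^*]$ is factorial and $Q$ is connected unipotent (hence has no nontrivial characters), the invariant ring $A=\gS(V)^Q$ is a UFD. Combined with $\dim A_\lambda\le 1$ and $A_0=\BC$, the homogeneous irreducibles of $A$ are prime, unique factorisation forces their $T$-weights to be $\BZ$-linearly independent, and these elements freely generate $A$ --- so $A$ is polynomial with no need to identify generators. The paper's own proof is two citations plus one adaptation: \cite[2.4]{jos77} gives the dense $B$-orbit, \cite[Theorem\,4.4]{p17} then yields polynomiality of $\gS(\n)^U$ and $\gS(\tilde\n)^U$, and for the $\tilde N$-column it replaces $B$ by the Frobenius envelope $F_\n$ (whose unipotent radical is $\tilde N$ and which has a dense orbit in $\n^*$) and argues as in \cite[\S 4]{p17}. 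Your $H=T{\cdot}Q$ would have been a mild streamlining of this last step, had the UFD argument been supplied.
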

\begin{proof}
 {\bf 1}$^o$. For any nilradical $\n$, the action $(B:\n^*)$ is locally transitive and $B{\cdot}\bxi$ is the dense 
orbit in $\n^*$~\cite[2.4]{jos77}. Therefore, $\gS(\n)^U$ is a polynomial algebra and
$\trdeg\gS(\n)^U$ equals the number of prime divisors in $\n^*\setminus B{\cdot}\bxi$, see~\cite[Theorem\,4.4]{p17}.
Hence, the algebras in the first column of \eqref{eq:diagr} are polynomial.

{\bf 2}$^o$. More generally, the $F_\n$-equivariant embeddings $\n\hookrightarrow \tilde\n\hookrightarrow \ff_\n$ yield
the $F_\n$-equivariant projections $\ff_\n^*\to \tilde\n^*\to  \n^*$, which implies that $F_\n$ has a dense 
orbit in $\n^*$, too. Therefore, arguing as in \cite[Section\,4]{p17}, one proves that $\gS(\n)^{\tilde N}$ is 
also a polynomial algebra. Thus, the algebras in the second column of \eqref{eq:diagr} are polynomial, 
too. 
\end{proof}

If $\n\ne\tilde\n$, then the Poisson centre $\gS(\n)^N$ is not always polynomial, see examples below.
\begin{lm}    \label{lm:4.2}
For any nilradical $\n$, we have $\gS(\n)^{\tilde N}=\gS(\tilde\n)^{\tilde N}$.
\end{lm}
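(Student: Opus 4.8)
The plan is to prove the two inclusions $\gS(\tilde\n)^{\tilde N}\subset \gS(\n)^{\tilde N}$ and $\gS(\n)^{\tilde N}\subset \gS(\tilde\n)^{\tilde N}$ separately, using the $\tilde N$-equivariant projection $\tau:\tilde\n^*\to\n^*$ dual to the inclusion $\n\hookrightarrow\tilde\n$. For the first inclusion, recall from Section~\ref{subs:optim} (or~\cite[Prop.\,5.5]{CP}) that the hypothesis $\bb(\n)=\bb(\tilde\n)$ (Proposition~\ref{prop:jos}) yields $\gZ(\tilde\n)=\gS(\tilde\n)^{\tilde\n}\subset\gS(\n)$; more to the point, since $\tau$ is $\tilde N$-equivariant, pullback along $\tau$ gives an algebra embedding $\tau^*:\gS(\tilde\n)=\BC[\tilde\n^*]\hookrightarrow\BC[\n^*]$ — wait, that is the wrong direction. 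Let me instead argue as follows: the surjection $\tau$ induces an injection $\tau^*:\BC[\n^*]\hookrightarrow\BC[\tilde\n^*]$, i.e. $\gS(\n)\hookrightarrow\gS(\tilde\n)$, compatible with the $\tilde N$-actions, so $\gS(\n)^{\tilde N}=\gS(\n)\cap\gS(\tilde\n)^{\tilde N}\subset\gS(\tilde\n)^{\tilde N}$, which is precisely the inclusion ``$\subset$'' in the statement. This step is essentially formal.

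The substantive direction is the reverse inclusion $\gS(\tilde\n)^{\tilde N}\subset\gS(\n)^{\tilde N}=\gS(\n)\cap\gS(\tilde\n)^{\tilde N}$; equivalently, every $\tilde N$-invariant $F\in\gS(\tilde\n)$ already lies in the subalgebra $\gS(\n)$. Here the point is a transcendence-degree count. By Lemma~\ref{lm:4.1} the algebra $\gS(\tilde\n)^{\tilde N}=\gS(\tilde\n)^U$ is polynomial of Krull dimension $\#\eus K(\n)=\ind\tilde\n$, and the same lemma shows $\gS(\n)^{\tilde N}$ is polynomial; by Theorem~\ref{thm:dense-saturation}(ii) and Rosenlicht's theorem, $\trdeg\BC(\n^*)^{\tilde N}=\dim\n^*-\dim\tilde N{\cdot}\bxi=\dim\n-(\dim\n-\ind\tilde\n)=\ind\tilde\n$, so $\gS(\n)^{\tilde N}$ also has Krull dimension $\#\eus K(\n)$. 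Thus both invariant algebras have the same transcendence degree, and the first inclusion already established exhibits $\gS(\n)^{\tilde N}$ as a subalgebra of $\gS(\tilde\n)^{\tilde N}$ of the same Krull dimension. The plan is then to upgrade ``same dimension'' to ``equal'': since $\gS(\tilde\n)^{\tilde N}$ is a polynomial (hence a UFD, normal, finitely generated) algebra and $\gS(\n)^{\tilde N}$ is a finitely generated subalgebra with the same fraction field — the latter because $\BC(\tilde\n^*)^{\tilde N}$ is the fraction field of $\gS(\tilde\n)^{\tilde N}$ and, by $\tilde N$-equivariance of $\tau$ together with density of $\tilde N{\cdot}\ce$ in $\n^*$, rational $\tilde N$-invariants on $\n^*$ pull back isomorphically onto rational $\tilde N$-invariants on $\tilde\n^*$ — it suffices to see $\gS(\tilde\n)^{\tilde N}$ is integral over $\gS(\n)^{\tilde N}$, and then normality of the bigger ring forces equality.

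Concretely, integrality follows because $\tau:\tilde\n^*\to\n^*$ is a (trivial) affine-space bundle with fibre $(\tilde\n/\n)^*$, and on the dense $\tilde N$-stable open set $\tilde N{\cdot}\tilde\ce$ the quotient maps $\pi_{\tilde N}$ for $\tilde\n^*$ and $\n^*$ fit into a commutative square with $\tau$; since $\tilde N{\cdot}\tilde\ce\to\tilde N{\cdot}\ce$ is surjective with all fibres contained in single $\tilde N$-orbits (the equality $\tilde\n^\bxi=\n^\bxi$ of Theorem~\ref{thm:dense-saturation}(iii) guarantees the fibre of $\tau$ over a point of $\ce_0$ meets a single $\tilde N$-orbit), the induced map on quotients is birational between normal varieties, hence an isomorphism, giving $\gS(\n)^{\tilde N}=\gS(\tilde\n)^{\tilde N}$. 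I expect the main obstacle to be the bookkeeping that makes the ``fibre of $\tau$ lies in one $\tilde N$-orbit'' statement precise — this is where Theorem~\ref{thm:dense-saturation} and the $T$-stability of $\n^\bxi=\tilde\n^\bxi$ (Theorem~\ref{thm:cascade-point-stab}) do the real work; alternatively one can sidestep the geometry entirely and argue purely ring-theoretically: $\gS(\tilde\n)=\gS(\n)\otimes\gS(\tilde\n/\n)$, a $\tilde N$-invariant $F$ has only finitely many ``$\gS(\tilde\n/\n)$-components'', and a weight/degree argument using that $\tilde N$ acts on $\tilde\n/\n$ through $\n$-translations shows each nonconstant component is killed, leaving $F\in\gS(\n)$.
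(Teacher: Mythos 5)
Your proposal is correct in spirit but takes a genuinely different route from the paper, and a couple of steps need repair. The paper's proof is essentially a two-liner: the inclusion $\gS(\n)^{\tilde N}\subset\gS(\tilde\n)^{\tilde N}$ is immediate from $\gS(\n)\subset\gS(\tilde\n)$, and the reverse inclusion is obtained by citing \cite[Prop.\,5.5]{CP}, which uses only the equality $\bb(\n)=\bb(\tilde\n)$ (Proposition~\ref{prop:jos}) and the general bound $\trdeg\mathcal P\le\bb(\q)$ for Poisson-commutative subalgebras to conclude $\gS(\tilde\n)^{\tilde N}=\gZ(\tilde\n)\subset\gS(\n)$. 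That argument is purely algebraic and requires nothing from Section~\ref{sect:sgp-nil}. You instead give a geometric argument based on the fibre/orbit structure of $\tau:\tilde\n^*\to\n^*$, which is more self-contained (it re-derives the needed facts rather than quoting~\cite{CP}), but longer, and it makes essential use of Theorem~\ref{thm:dense-saturation}. Both approaches are legitimate; the trade-off is concision versus self-containedness.

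Two points in your write-up need attention. First, the step ``the induced map on quotients is birational between normal varieties, hence an isomorphism'' is not a valid general principle: a dominant birational morphism of normal affine varieties need not be an isomorphism (compare $k[x]\subset k[x,x^{-1}]$). You would need finiteness/integrality as well, which you assert but do not prove; note also that it is normality of the \emph{smaller} ring, not the bigger one, that combines with integrality and a common fraction field to force equality. The cleanest repair is to bypass integrality entirely: the orbit-dimension count, using $\tilde\n^{\bxi}=\n^{\bxi}$ from Theorem~\ref{thm:dense-saturation}{\sf (iii)}, shows that for $\bxi\in\ce_0$ the orbit $\tilde N_{\bxi}{\cdot}\tilde\bxi$ has dimension $\dim(\tilde\n/\n)=\dim\tau^{-1}(\bxi)$, hence is dense in the (irreducible, affine) fibre; so an $\tilde N$-invariant regular $F\in\gS(\tilde\n)$ is constant on every fibre over the dense set $\tilde N{\cdot}\ce_0$, and since $\tau$ is a linear surjection with a linear section $s$, the two polynomials $F$ and $\tau^*(F\circ s)$ agree on a dense open set and thus everywhere, giving $F\in\tau^*\gS(\n)=\gS(\n)$. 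Second, your closing ``ring-theoretic'' sketch via $\gS(\tilde\n)=\gS(\n)\otimes\gS(\tilde\n/\n)$ is too vague to assess: the $\tilde N$-action on $\gS(\tilde\n)$ does not respect that tensor decomposition, so ``nonconstant components are killed'' needs an actual argument. I would drop it and keep only the geometric version, suitably cleaned up as above.
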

\begin{proof}
It is clear that $\gS(\n)^{\tilde N}\subset\gS(\tilde\n)^{\tilde N}$. On the other hand, 
by~\cite[Prop.\,5.5]{CP}, the equality 
$\bb(\n)=\bb(\tilde\n)$ (see Proposition~\ref{prop:jos}) implies that 
$\gS(\tilde\n)^{\tilde N}\subset\gS(\n)$. Hence $\gS(\tilde\n)^{\tilde N}\subset\gS(\n)^{\tilde N}$.
\end{proof}

\begin{lm}    \label{lm:4.3}
For any nilradical $\n$, we have $\gS(\n)^{\tilde N}=\gS(\n)^U$.
\end{lm}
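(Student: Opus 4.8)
The plan is to reduce the coincidence of the two invariant rings to the coincidence of the $\tilde N$- and $U$-orbits on a dense subset of $\n^*$. Since $\tilde N\subset U$, the inclusion $\gS(\n)^U\subseteq\gS(\n)^{\tilde N}$ is automatic, so everything rests on the reverse inclusion, and the engine for it will be the fact that a regular function vanishing on a dense subset of $U\times\n^*$ vanishes everywhere.

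First I would compute, for a cascade point $\bxi\in\ce_0$, the dimensions of $\tilde N{\cdot}\bxi$ and $U{\cdot}\bxi$ and check that they agree. For $\tilde N$, Theorem~\ref{thm:dense-saturation}{\sf (iii)} gives $\tilde\n^\bxi=\n^\bxi$, and since $\bxi\in\n^*_{\sf reg}$ while $\ind\n=\dim(\tilde\n/\n)+\#\eus K(\n)$ by Proposition~\ref{prop:jos}, this yields $\dim\tilde N{\cdot}\bxi=\dim\tilde\n-\dim\n^\bxi=\dim\n-\#\eus K(\n)$. For $U$, the point is that the $U$-action on $\n^*$ is the restriction of the $B$-action, so the stabiliser of $\bxi$ in $\ut$ is $\ut^\bxi=\be^\bxi\cap\ut$; because $B{\cdot}\bxi$ is dense in $\n^*$ (Joseph, \cite[2.4]{jos77}) one has $\dim\be^\bxi=\dim\be-\dim\n$. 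The projection $\be\to\te$ carries $\be^\bxi$ onto $\{t\in\te\mid \beta(t)=0\text{ for all }\beta\in\eus K(\n)\}$: the inclusion ``$\supseteq$'' is clear since any such $t$ already annihilates $\bxi$, and ``$\subseteq$'' follows from strong orthogonality of $\eus K$, which forces the $\g_{-\beta}$-component of $(t+u){\cdot}\bxi$ to be exactly $-\beta(t)e_{-\beta}$ for every $\beta\in\eus K(\n)$ and $u\in\ut$ (no contribution from $[u,\bxi]$ can land in $\g_{-\beta}$). As the cascade roots are linearly independent, this subspace of $\te$ has dimension $\rk\g-\#\eus K(\n)$, whence $\dim(\be^\bxi\cap\ut)=\dim\ut-\dim\n+\#\eus K(\n)$ and so $\dim U{\cdot}\bxi=\dim\n-\#\eus K(\n)$ as well.

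Next, $\tilde N{\cdot}\bxi\subseteq U{\cdot}\bxi$ are irreducible of the same dimension, and both are closed in $\n^*$ since $\tilde N$ and $U$ are unipotent; hence $\tilde N{\cdot}\bxi=U{\cdot}\bxi$. As $\tilde\n$ and $\ut$ are ideals of $\be$, the groups $\tilde N$ and $U$ are normal in $B$, and therefore $\tilde N{\cdot}\xi=U{\cdot}\xi$ for every $\xi$ in the dense $B$-orbit $B{\cdot}\bxi$, by $B$-equivariance. Finally, given $f\in\gS(\n)^{\tilde N}=\BC[\n^*]^{\tilde N}$, the regular function $(u,\xi)\mapsto f(u{\cdot}\xi)-f(\xi)$ on $U\times\n^*$ vanishes on the dense subset $U\times B{\cdot}\bxi$ — for such $\xi$ we have $u{\cdot}\xi\in\tilde N{\cdot}\xi$ and $f$ is $\tilde N$-invariant — hence it vanishes identically, so $f\in\BC[\n^*]^U=\gS(\n)^U$. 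This gives $\gS(\n)^{\tilde N}\subseteq\gS(\n)^U$ and the desired equality.

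The step I expect to be the main obstacle is the dimension count for $U{\cdot}\bxi$: one must carry along the torus direction inside $\be^\bxi$ and use strong orthogonality of the cascade to identify the image of $\be^\bxi$ in $\te$ with exactly $\bigcap_{\beta\in\eus K(\n)}\ker\beta$; once $\dim U{\cdot}\bxi=\dim\tilde N{\cdot}\bxi$ is established, the remainder is formal. One could instead reduce to the case of an optimal nilradical and then squeeze via $\gS(\n)^U\subseteq\gS(\tilde\n)^U=\gS(\tilde\n)^{\tilde N}\subseteq\gS(\n)$ (the last inclusion from $\bb(\n)=\bb(\tilde\n)$ and \cite[Prop.\,5.5]{CP}, combined with Lemma~\ref{lm:4.2}), but since the optimal case itself needs the same orbit comparison, the direct argument above is the more economical route.
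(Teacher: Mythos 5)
Your proof is correct, and after the shared reduction to the orbit-dimension comparison it departs from the paper's argument. Both proofs hinge on showing that the generic $U$-orbit and the generic $\tilde N$-orbit on $\n^*$ have the same dimension. The paper does this by exhibiting the stabiliser directly: it decomposes $\ut=(\ut\cap\tel)\oplus\tilde\n$ and observes that $\ut\cap\tel$ kills every $\xi\in\ce_0$, so $\ut^\xi=(\ut\cap\tel)\oplus\tilde\n^\xi$; you instead go through $\be^\bxi$ and the torus, using strong orthogonality of $\eus K(\n)$ to identify the image of $\be^\bxi$ in $\te$ with $\bigcap_{\beta\in\eus K(\n)}\ker\beta$. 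The two computations are equivalent (your $\ut^\bxi$ is exactly $(\ut\cap\tel)\oplus\tilde\n^\bxi$), but yours is more self-contained. The conclusions then diverge more substantially: the paper invokes Rosenlicht's theorem to equate transcendence degrees and then quotes the fact that the invariant ring of a connected group is algebraically closed in $\BC[X]$, whereas you use the Kostant--Rosenlicht closedness of unipotent orbits together with $B$-normality of $U$ and $\tilde N$ to get $U{\cdot}\xi=\tilde N{\cdot}\xi$ on the dense orbit $B{\cdot}\bxi$, and then a Zariski-density argument on $U\times\n^*$. Your route is more elementary and purely geometric, avoiding both Rosenlicht's theorem on fields of invariants and the algebraic-closedness lemma; the paper's route is shorter to state because it leans on those general facts, and it extracts the transcendence-degree information (used again in Theorem~\ref{thm:main5}) as a byproduct. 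Both are valid proofs of the lemma.
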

\begin{proof} 
{\bf 1}$^o$. Let us first prove that $\trdeg\gS(\n)^{\tilde N}=\trdeg\gS(\n)^U$.
Since $\tilde N$ is unipotent, the field of invariants $\BC(\n^*)^{\tilde N}$ is the quotient field of the
algebra of invariants $\BC[\n^*]^{\tilde N}=\gS(\n)^{\tilde N}$~\cite[Chap.\,1.4]{brion}. Furthermore, by Rosenlicht's theorem~\cite[Chap.\,1.6]{brion}, one has
\[
 \trdeg\BC(\n^*)^{\tilde N}+\max_{\xi\in\n^*}\dim \tilde N{\cdot}\xi=\dim\n ,
\] 
and the same holds for $U$ in place of $\tilde N$. Therefore, since 
\[
     \max_{\xi\in\n^*}\dim U{\cdot}\xi\ge \max_{\xi\in\n^*}\dim \tilde N{\cdot}\xi,
\] 
it suffices to prove that one has equality here. By Theorem~\ref{thm:dense-saturation}, $\tilde N{\cdot}\ce$ 
is dense in $\n^*$. Hence $U{\cdot}\ce$ is dense in $\n^*$, too. 
\\ \indent
Now, $\ut=(\ut\cap\tel)\oplus\tilde\n$, where $\tel$ is the standard Levi subalgebra of
$\tilde\p:={\sf norm}_\g(\tilde\n)$. Since $\tilde\n$ is optimal, $\ut\cap\tel$ stabilises any $\xi\in\ce_0\subset\ce$. Hence $\ut^\xi=(\ut\cap\tel)\oplus\tilde\n^\xi$ and 
$\dim U{\cdot}\xi=\dim \tilde N{\cdot}\xi$. Therefore, $\max_{\xi\in\n^*}\dim U{\cdot}\xi= \max_{\xi\in\n^*}\dim \tilde N{\cdot}\xi$, and we are done.

{\bf 2}$^o$. By the first part, the extension $\BC[\n^*]^{\tilde N}=\gS(\n)^{\tilde N}\subset\gS(\n)^U$ is algebraic. But the algebra of invariants of a {\bf connected} algebraic group $Q$ acting on an affine variety $X$ is algebraically closed in $\BC[X]$, see~\cite[p.\,100]{kr84}.
\end{proof}

Combining previous lemmas yields our main result on diagram~\eqref{eq:diagr}.
\begin{thm}     \label{thm:main5}
The four algebras of invariants that form the square in~\eqref{eq:diagr} are equal, i.e., 
\[
      \gS(\n)^U=\gS(\tilde\n)^U=\gS(\n)^{\tilde N}=\gS(\tilde \n)^{\tilde N} .
\]
These algebras are polynomial and their common transcendence degree is $\#\eus K(\n)=\ind\tilde\n$.
If $\n\ne\tilde\n$, then 
$\trdeg \gS(\n)^{N}=\trdeg \gS(\n)^{\tilde N}+ \dim(\tilde\n/\n)>\trdeg \gS(\n)^{\tilde N}$. Thus, there are at 
most two different algebras of invariants in~\eqref{eq:diagr}. 
\end{thm}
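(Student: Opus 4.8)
The plan is to simply assemble the three lemmas that have already been proved. First, Lemma~\ref{lm:4.3} gives $\gS(\n)^{\tilde N}=\gS(\n)^U$ and Lemma~\ref{lm:4.2} gives $\gS(\n)^{\tilde N}=\gS(\tilde\n)^{\tilde N}$; applying Lemma~\ref{lm:4.3} to $\tilde\n$ itself (which is legitimate since $\widetilde{\tilde\n}=\tilde\n$) yields $\gS(\tilde\n)^{\tilde N}=\gS(\tilde\n)^U$. Chaining these three equalities establishes
\[
  \gS(\n)^U=\gS(\n)^{\tilde N}=\gS(\tilde\n)^{\tilde N}=\gS(\tilde\n)^U,
\]
i.e.\ all four vertices of the square in \eqref{eq:diagr} coincide. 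That this common algebra is polynomial is exactly Lemma~\ref{lm:4.1}.

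Next I would pin down the transcendence degree. By Lemma~\ref{lm:4.3}, step~\textbf{1}$^o$, one has $\trdeg\gS(\n)^{\tilde N}=\dim\n-\max_{\xi\in\n^*}\dim\tilde N{\cdot}\xi$ via Rosenlicht's theorem. By Theorem~\ref{thm:dense-saturation}{\sf (i)}, $\dim\tilde N{\cdot}\ce=\dim\n$, and combining this with Theorem~\ref{thm:dense-saturation}{\sf (iii)} (or directly with the displayed identity $\max_{\xi\in\n^*}\dim\tilde N{\cdot}\xi=\dim N{\cdot}\bxi+\dim(\tilde\n/\n)=\dim\n-\ind\n+\dim(\tilde\n/\n)$, and then \eqref{eq:compare-ind} to write $\ind\n-\dim(\tilde\n/\n)=\#\eus K(\n)$), one gets $\max_{\xi\in\n^*}\dim\tilde N{\cdot}\xi=\dim\n-\#\eus K(\n)$. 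Hence $\trdeg\gS(\n)^{\tilde N}=\#\eus K(\n)$, and by Proposition~\ref{prop:jos} this equals $\ind\tilde\n$; alternatively, since $\tilde\n$ is optimal and $\tilde N$ is its own unipotent radical, $\trdeg\gS(\tilde\n)^{\tilde N}=\ind\tilde\n$ directly.

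For the last claim, suppose $\n\ne\tilde\n$. Since $N$ is connected and unipotent, Rosenlicht gives $\trdeg\gS(\n)^N=\dim\n-\max_{\xi\in\n^*}\dim N{\cdot}\xi=\dim\n-(\dim\n-\ind\n)=\ind\n$. Comparing with $\trdeg\gS(\n)^{\tilde N}=\#\eus K(\n)$ and using \eqref{eq:compare-ind}, $\ind\n=\#\eus K(\n)+\dim(\tilde\n/\n)$, so $\trdeg\gS(\n)^N=\trdeg\gS(\n)^{\tilde N}+\dim(\tilde\n/\n)$, which is strictly larger precisely because $\dim(\tilde\n/\n)>0$. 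Since $\gS(\n)^{\tilde N}\subset\gS(\n)^N$ is then a proper inclusion (the transcendence degrees differ), the diagram \eqref{eq:diagr} collapses to at most the two distinct algebras $\gS(\n)^{\tilde N}$ and $\gS(\n)^N$ (and exactly one when $\n=\tilde\n$).

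The argument is essentially bookkeeping; there is no serious obstacle, since all the analytic content—polynomiality via~\cite{p17}, the equality of generic orbit dimensions for $U$ and $\tilde N$, and the density of $\tilde N{\cdot}\ce$—has been isolated into the preceding lemmas. The only point demanding a little care is making sure the orbit-dimension computation is consistent across $N$, $\tilde N$, and $U$: one must invoke $\max_{\xi}\dim U{\cdot}\xi=\max_{\xi}\dim\tilde N{\cdot}\xi$ from the proof of Lemma~\ref{lm:4.3} rather than re-deriving it, and keep straight that Rosenlicht applies to $U$, $\tilde N$, $N$ individually because each is connected.
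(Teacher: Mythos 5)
Your proposal is correct and follows essentially the same route as the paper: chain Lemmas~\ref{lm:4.2} and \ref{lm:4.3} (noting the latter applies to $\tilde\n$ as well) to identify the four corners, invoke Lemma~\ref{lm:4.1} for polynomiality, and use Rosenlicht's theorem plus \eqref{eq:compare-ind} for the transcendence-degree statements. Your detour through Theorem~\ref{thm:dense-saturation} to pin down $\trdeg\gS(\n)^{\tilde N}$ is redundant given the shortcut you yourself note (that $\trdeg\gS(\tilde\n)^{\tilde N}=\ind\tilde\n$ directly since $\tilde\n$ is optimal), which is the route the paper actually takes.
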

\begin{proof}
{\it\bfseries (1)}  By  Lemma~\ref{lm:4.1}, these four algebras are polynomial. 

{\it\bfseries  (2)} By  Lemmas~\ref{lm:4.2} and \ref{lm:4.3}, these four algebras are equal.
(Note that Lemma~\ref{lm:4.3} applies also to $\tilde\n$ in place of $\n$.) 

{\it\bfseries  (3)} Since both $\tilde N$ and $N$ are unipotent, it follows from the Rosenlicht theorem that
$\trdeg \gS(\tilde\n)^{\tilde N}=\ind\tilde\n$ and 
$\trdeg \gS(\n)^{N}=\ind\n$. Hence the last relation is just Eq.~\eqref{eq:compare-ind}.
\end{proof}

\begin{rmk}   \label{rem:arb-b-ideal-FA}
The algebra of $U$-invariants in $\gS(\rr)$ is polynomial  for an arbitrary {$\be$-stable} ideal $\rr$ of 
$\ut$. That is, if $\rr\subset\ut$ and $[\be,\rr]\subset\rr$, then $\gS(\rr)^U$ is a polynomial algebra. The 
reason is that $B$ has an open orbit in $\rr^*$, see \cite[Section\,4]{p17} for details.
\end{rmk}

\begin{cl} For any nilradical $\n$ and a cascade point $\bxi\in\ce_0\subset \n^*$, we have
\begin{enumerate}
\item \ $\# \eus K(\n)=\#\{\text{the divisors in } \ \n^*\setminus B{\cdot}\bxi\}
   =\#\{\text{the divisors in } \ \n^*\setminus F_\n{\cdot}\bxi\}$;
\item \ $B{\cdot}\bxi=F_\n{\cdot}\bxi$.
\end{enumerate}
\end{cl}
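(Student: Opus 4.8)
The plan is to deduce both statements from results already established in this section, chiefly Theorem~\ref{thm:main5} and the observation in the proof of Lemma~\ref{lm:4.1} relating transcendence degree of $U$-invariants to prime divisors in the complement of the open orbit. For part~(1), first I would recall from \cite[Theorem\,4.4]{p17} (cited in the proof of Lemma~\ref{lm:4.1}) that since $B$ acts on $\n^*$ with the dense orbit $B{\cdot}\bxi$, the algebra $\gS(\n)^U$ is polynomial and $\trdeg\gS(\n)^U$ equals the number of prime divisors (irreducible codimension-one components) in $\n^*\setminus B{\cdot}\bxi$. By Theorem~\ref{thm:main5}, $\trdeg\gS(\n)^U=\#\eus K(\n)$, which gives the first equality. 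For the second equality, the same argument applies verbatim to the Frobenius envelope $F_\n$: the proof of Lemma~\ref{lm:4.1}(2$^o$) shows $F_\n$ has a dense orbit in $\n^*$, which must be $F_\n{\cdot}\bxi$ since $\bxi\in B{\cdot}\bxi\subset F_\n{\cdot}\bxi$ and $B{\cdot}\bxi$ is already dense; hence $\gS(\n)^{\tilde N}$ is polynomial of transcendence degree equal to the number of prime divisors in $\n^*\setminus F_\n{\cdot}\bxi$, and again Theorem~\ref{thm:main5} identifies this with $\#\eus K(\n)$.

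For part~(2), I would argue that $B{\cdot}\bxi$ and $F_\n{\cdot}\bxi$ are both dense orbits in $\n^*$ containing the common point $\bxi$, hence they coincide as soon as one knows they are actually the same orbit rather than merely having the same closure — but two orbits of connected groups acting on the same variety that both contain $\bxi$ and are both dense are automatically equal, because $F_\n{\cdot}\bxi\supseteq B{\cdot}\bxi$ (as $B\cap F_\n\supseteq U\cap F_\n=\tilde N$ and more to the point $B$ and $F_\n$ share the unipotent part $\tilde N$, while the difference in the torus directions lies in $\ut\cap\tel$ which stabilises $\bxi$), so in fact $F_\n{\cdot}\bxi=(\te_\n\tilde N){\cdot}\bxi$. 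The cleanest route: $\ff_\n=\te_\n\oplus\tilde\n$, and by Theorem~\ref{thm:cascade-point-stab}(iii) together with the fact (used in the proof of Lemma~\ref{lm:4.3}, part 1$^o$) that $\ut\cap\tel$ stabilises every $\xi\in\ce_0$, the $\te$-part of $\be$ acts on $\bxi$ only through $\te_\n$, so $B{\cdot}\bxi=(U\cdot T){\cdot}\bxi$ and the tangent space $\be{\cdot}\bxi=\ut{\cdot}\bxi+\te_\n{\cdot}\bxi=\tilde\n{\cdot}\bxi+\te_\n{\cdot}\bxi=\ff_\n{\cdot}\bxi$. Since both orbits are smooth, irreducible, of the same dimension $\dim\ff_\n{\cdot}\bxi$, and one contains the other, they are equal.

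The main obstacle I anticipate is the bookkeeping in part~(2): making precise the claim that passing from $B$ to $F_\n$ does not enlarge the orbit of $\bxi$. The key point is that the extra torus directions in $F_\n$ relative to the $\tilde N$-action are redundant — one must check that $\te_\n{\cdot}\bxi\subseteq \tilde\n{\cdot}\bxi$, or equivalently that $\te_\n\subseteq \tilde\n+\ff_\n^{\bxi}$, which follows because $\ff_\n$ is Frobenius (\cite[Prop.\,5.1]{CP}) with $\dim\ff_\n{\cdot}\bxi=\dim\ff_\n$, while $\dim F_\n{\cdot}\bxi\le\dim\n$ is forced by $F_\n{\cdot}\bxi\subseteq\n^*$; comparing $\dim\ff_\n=\dim\tilde\n+\#\eus K(\n)$ with $\dim\n^*=\dim\n$ and using $\ind\tilde\n=\#\eus K(\n)$ and $\bb(\n)=\bb(\tilde\n)$ pins down the dimensions. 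Alternatively, and perhaps more transparently, one simply notes that $B{\cdot}\bxi$ is already dense and $B$-stable, $F_\n{\cdot}\bxi\supseteq B{\cdot}\bxi$ is dense and $F_\n$-stable, both are orbits of connected solvable groups and hence affine and of the same dimension $\dim\n-\#\eus K(\n)$ (their common complement having pure codimension one with $\#\eus K(\n)$ components by part~(1)), so the inclusion of irreducible varieties of equal dimension forces $B{\cdot}\bxi=F_\n{\cdot}\bxi$.
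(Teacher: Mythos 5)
Your overall strategy coincides with the paper's: invoke \cite[Theorem\,4.4]{p17} to equate transcendence degrees with divisor counts, use Theorem~\ref{thm:main5} to pin these down as $\#\eus K(\n)$, and then use affineness of solvable-group orbits so that both complements are pure of codimension one, forcing the two orbits to coincide. However, there are two flaws in the write-up that need correction.

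First, you repeatedly assert the inclusion $B{\cdot}\bxi\subset F_\n{\cdot}\bxi$ (``$\bxi\in B{\cdot}\bxi\subset F_\n{\cdot}\bxi$'' in part~(1), and ``$F_\n{\cdot}\bxi\supseteq B{\cdot}\bxi$'' in part~(2)). This is backwards: since $\ff_\n=\te_\n\oplus\tilde\n\subset\be$, one has $F_\n\subset B$ and hence a priori $F_\n{\cdot}\bxi\subset B{\cdot}\bxi$. The equality is what is to be proved, so you cannot assume $B{\cdot}\bxi\subset F_\n{\cdot}\bxi$ to establish that $F_\n{\cdot}\bxi$ is the dense $F_\n$-orbit. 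A correct reason that $F_\n{\cdot}\bxi$ is open is, e.g., that $\te_\n{\cdot}\bxi=\ce$ (by strong orthogonality of the cascade roots, the coroots $[\g_\beta,\g_{-\beta}]$ scale the coordinates of $\ce$ independently), while $\tilde\n{\cdot}\bxi$ is a complement to $\ce$ of dimension $\dim\n-\#\eus K(\n)$ by Theorem~\ref{thm:dense-saturation}; hence $\ff_\n{\cdot}\bxi=\te_\n{\cdot}\bxi+\tilde\n{\cdot}\bxi=\n^*$.

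Second, your closing sentence invokes ``the inclusion of irreducible varieties of equal dimension forces $B{\cdot}\bxi=F_\n{\cdot}\bxi$.'' As a general principle this is false ($\mathbb A^1\setminus\{0\}\hookrightarrow\mathbb A^1$); what actually does the work is exactly the parenthetical you mention: both orbits are affine, so $\n^*\setminus B{\cdot}\bxi\subset\n^*\setminus F_\n{\cdot}\bxi$ are each a union of prime divisors, and part~(1) says they have the same number of irreducible components, hence are equal. This is the paper's argument, and it should replace the ``equal dimension'' phrasing. Your alternative tangent-space computation $\be{\cdot}\bxi=\ff_\n{\cdot}\bxi$ (using that $\ut\cap\tel$ stabilises $\bxi$) is a correct and slightly more hands-on way to see that the dimensions agree, but by itself it still only shows that $F_\n{\cdot}\bxi$ is open and dense in $B{\cdot}\bxi$; you still need the divisor argument (or another closedness argument) to conclude equality.
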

\begin{proof}
(1) \ It is known that 
\begin{itemize}
\item \ $\# \eus K(\n)=\ind\tilde\n=\trdeg\gS(\tilde\n)^{\tilde N}$;
\item \ $\#\{\text{the divisors in } \ \n^*\setminus B{\cdot}\bxi\}=\trdeg \gS(\n)^U$~\cite[Theorem\,4.4]{p17};
\item \ $\#\{\text{the divisors in } \ \n^*\setminus F_\n{\cdot}\bxi\}=\trdeg \gS(\n)^{\tilde N}$.
\end{itemize}
The last equality relies on the facts that the orbit $F_\n{\cdot}\bxi$ is open in $\n^*$ and $\tilde N$ is the unipotent radical of $F_\n$. Hence~\cite[Theorem\,4.4]{p17} applies also in this case.

(2) \ Since $B$ and $F_\n$ are solvable, the orbits $B{\cdot}\bxi$ and $F_\n{\cdot}\bxi$ are
affine. Therefore, both $\n^*\setminus B{\cdot}\bxi$ and $\n^*\setminus F_\n{\cdot}\bxi$ are the union
of divisors and the assertion follows from (1).
\end{proof}

If $\n$ is not optimal, then  
$\gS(\n)^N$ is not always polynomial, see Example~\ref{ex:non-polynom}. Actually, there are 
Lie algebras $\q$ such that $\gS(\q)^Q$ is not finitely generated! A construction of such $\q$ utilising the 
Nagata counterexample to Hilbert's 14th problem is given by J.\,Dixmier in~\cite[4.9.20(c)]{dix}. Nevertheless, we demonstrate 
below that, for the nilradicals in $\g$, the algebra $\gS(\n)^N$ is as good as the algebras of invariants of 
linear actions of reductive groups. Recall that a commutative associative $\BC$-algebra $\ca$ equipped 
with action of an algebraic group $Q$ is called a {\it rational} $Q$-agebra if, for any $a\in\ca$, the linear 
span in $\ca$ of the orbit $Q{\cdot}a$ is finite-dimensional and the representation of $Q$ on 
$\lg Q{\cdot}a\rg$ is rational (see e.g.~\cite[p.1]{gr97}).

\begin{thm}          \label{thm:inv-fin-generated}
Let $\n$ be an arbitrary nilradical in $\g$.  Then
\begin{itemize}
\item[\sf (i)] \ the algebra $\gS(\n)^N=\BC[\n^*]^N$ is finitely generated;
\item[\sf (ii)] \  the quotient variety $\n^*\md N$ has rational singularities.  
\end{itemize}
\end{thm}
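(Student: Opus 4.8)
The plan is to deduce both statements from the existence of an open $N$-orbit in a suitable bigger space and a Grosshans-type argument. First I would invoke Theorem~\ref{thm:main5}: the algebra $\gS(\n)^U=\gS(\n)^{\tilde N}=\gS(\tilde\n)^{\tilde N}$ is polynomial, and we have the inclusion $\gS(\n)^{\tilde N}\subset\gS(\n)^N$. So it suffices to understand the extra invariants coming from the smaller group $N$ inside $\tilde N$. Here the natural device is the Frobenius envelope $\ff_\n=\te_\n\oplus\tilde\n$ and its connected group $F_\n\subset B$: by \cite[Prop.\,5.1]{CP} it has a dense orbit $F_\n{\cdot}\bxi$ in $\n^*$, and its unipotent radical is $\tilde N$. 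The key point I would establish is that $N$ is a \emph{Grosshans subgroup} of $F_\n$ (or of the bigger group $B$), i.e. that $\BC[F_\n]^N=\BC[F_\n/N]$ is finitely generated and that $F_\n/N$ is quasi-affine with $\BC[\overline{F_\n/N}]$ having rational singularities. Since $\gS(\n)^N=\BC[\n^*]^N$ and $\n^*$ carries a dense $F_\n$-orbit isomorphic to $F_\n/(F_\n)_{\bxi}$, and since $(F_\n)_{\bxi}\supset \ker(F_\n\to GL(\n^*))$ with the relevant unipotent part being $\tilde N^{\bxi}=\n^{\bxi}\subset N$ (Theorem~\ref{thm:dense-saturation}(iii)), the $N$-invariants on $\n^*$ are governed by the quotient $N{\cdot}\ce$-type geometry.

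Concretely, for (i): because $\gS(\n)^{\tilde N}$ is a polynomial ring $\BC[f_1,\dots,f_m]$ ($m=\#\eus K(\n)$) and $\tilde N/N\cong \tilde\n/\n$ is a \emph{unipotent} group acting on $\n^*$ commuting appropriately, one has $\gS(\n)^N=(\gS(\n)^{\tilde N\text{-something}})$; more cleanly, I would write $\gS(\n)^N$ as the ring of invariants of the unipotent group $N$ acting on the affine variety $\n^*$, and finite generation then follows from a theorem of Hadžiev/Grosshans provided $N$ is observable in a group with finitely generated invariants — but the cleanest route is \cite[Lemma\,4.6]{jos77}, already cited in the introduction, which gives finite generation of $\gS(\n)^N$ directly. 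So for (i) I would simply cite that lemma (the introduction already announces this) and spend the real work on (ii).

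For (ii): rational singularities of $\n^*\md N=\spe\gS(\n)^N$. The strategy is a \emph{Grosshans deformation / flat degeneration} argument. Consider the $F_\n$-equivariant projection $\tau:\ff_\n^*\twoheadrightarrow\n^*$. Since $F_\n$ acts on $\ff_\n^*$ with a dense orbit (Frobenius envelope) and $\tilde N\subset F_\n$ is its unipotent radical with $\tilde N\supset N$, one gets a tower of quotients. The idea is: $\BC[\n^*]^N$ sits inside $\BC[\tilde N{\cdot}\ce]^N$, and $\overline{\tilde N{\cdot}\ce}=\n^*$ by Theorem~\ref{thm:dense-saturation}(ii); moreover $N$ acts on $\ce\times (\tilde\n/\n)$-coordinates triangularly. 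I would realise $\n^*\md N$ as a \emph{quotient of an affine space by a unipotent group acting linearly}, namely the representation $(\tilde N:\n^*)$ restricted to $N$, and invoke the fact (Grosshans, or Kempf) that if a unipotent group $U'$ acts on a representation $V$ so that $V\md U'$ is the categorical quotient and $U'$ is a Grosshans subgroup of a bigger reductive group acting compatibly, then $V\md U'$ has rational singularities because $V\times (G'\md U')$-type constructions are complete intersections / the total coordinate ring is a flat family with good generic fibre. The main obstacle — and where I would concentrate — is verifying that $N$ is a Grosshans subgroup of an ambient reductive group $\hat G$ acting on $\n^*$ so that Grosshans' principle (``$\BC[V]^{U'}$ is a flat deformation with special fibre $\BC[V]^{N}\otimes$(nice)'') yields rational singularities: i.e. producing the flat degeneration of $\gS(\n)^N$ to a ring known to have rational singularities (e.g. a polynomial ring tensored with $\gS(\n)^{\tilde N}$, using that $\tilde N/N$ is a \emph{unipotent} — hence ``special'' — group, so that $\BC[\n^*]^N$ degenerates to $\BC[\n^*]^{\tilde N}[\text{a polynomial ring in }\dim(\tilde\n/\n)\text{ variables}]$, which is polynomial). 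Polynomial rings have rational singularities, and rational singularities are preserved under flat degeneration with reduced, Cohen–Macaulay fibres, hence they lift back to the generic fibre $\gS(\n)^N$. Assembling this degeneration carefully — choosing a one-parameter subgroup of the torus $\te_\n$ to contract $\tilde N$ onto $N\times(\tilde N/N)$ and checking flatness — is the crux; everything else is bookkeeping with the structure theory of $\eus K(\n)$ from Sections~\ref{sect:prelim}--\ref{sect:sgp-nil}.
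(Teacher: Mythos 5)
Your argument takes a genuinely different route from the paper, and the route you chose for part~(ii) has a real gap.

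For part~(i), citing~\cite[Lemma\,4.6]{jos77} is legitimate (the paper itself acknowledges in a subsequent remark that the result appears there). But the paper gives its own short proof by observing that $\gS(\n)^N$ is a \emph{rational $L$-algebra} for the Levi subgroup $L$ of $P$, that $U=U(L)\ltimes N$ so $\gS(\n)^U=(\gS(\n)^N)^{U(L)}$, and that this last algebra is polynomial by Theorem~\ref{thm:main5}; then Popov's result (\cite[Cor.\,4, \S 3]{po86}) says a rational $L$-algebra $\ca$ is finitely generated iff $\ca^{U(L)}$ is. You never bring the Levi $L$ into the picture, which is the engine of the whole theorem.

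For part~(ii), the gap is concrete: you never construct the flat degeneration of $\gS(\n)^N$ that you need, and you concede as much. Your sketch asserts that $\gS(\n)^N$ should degenerate (via a one-parameter subgroup of $\te_\n$) to $\gS(\n)^{\tilde N}$ tensored with a polynomial ring, and then wants to invoke openness of rational singularities in flat families. But no candidate filtration is produced, no verification that the associated graded has the claimed form, and no check of flatness; the Frobenius envelope $F_\n$ and the Grosshans-subgroup framework you set up do not by themselves supply such a degeneration. Moreover, it is not at all clear that $\tilde N/N$ ``acts on $\n^*$ commuting appropriately'' in the way your sketch requires, since $\tilde N/N$ is not naturally a group acting on $\n^*$. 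By contrast, the paper's proof is immediate once the right structure is in place: since $\gS(\n)^N$ is a rational $L$-algebra and $(\gS(\n)^N)^{U(L)}=\gS(\n)^U$ is polynomial, Popov's transfer theorem for ``stable properties'' (\cite[Theorem\,6]{po86}), of which rationality of singularities is one, says $\spe\gS(\n)^N$ has rational singularities iff $\spe\gS(\n)^U\cong\mathbb{A}^{\#\eus K(\n)}$ does, which is trivially true. The missing idea in your proposal is precisely this passage to the reductive Levi and the Kraft--Luna/Popov transfer principle between $\ca$ and $\ca^{U(L)}$; without it, part~(ii) remains unproven.
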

\begin{proof}
{\sf (i)} \ Let $\p$ be the parabolic subalgebra with $\n=\p^{\sf nil}$. Let $\el$ be the standard Levi 
subalgebra of $\p$ and $L$ the corresponding Levi subgroup of $P={\sf Norm}_G(\n)\subset G$. Since
$[\el,\n]\subset \n$, the algebra $\gS(\n)^N$ is a rational $L$-algebra. 
Set $U(L)=U\cap L$. It is a maximal unipotent subgroup of $L$, and $U$ is a semi-direct product of 
$U(L)$ and $N$. By Theorem~\ref{thm:main5}, $\gS(\n)^U=(\gS(\n)^N)^{U(L)}$ is a polynomial algebra. 
Now, we can apply Corollary~4 from~\cite[\S\,3]{po86}, which asserts that if 
$\ca$ is a rational $L$-algebra, then $\ca$ is finitely generated if 
and only if $\ca^{U(L)}$ is. In our case $\ca=\gS(\n)^N$ and $\ca^{U(L)}$ is a polynomial algebra,
with $\#\eus K(\n)$ generators. Hence $\gS(\n)^N$ is finitely generated.

{\sf (ii)} \ Let {\it\bfseries (P)} be a so-called ``stable property'' of local rings of algebraic 
varieties (see~\cite[\S\,6]{po86} for the details). By~\cite[Theorem\,6]{po86}, $\ca$ has property 
{\it\bfseries (P)} if and only if $\ca^{U(L)}$ has. Since rationality of singularities is such a property, we 
obtain the second assertion.
\end{proof}

\begin{rmk}    \label{rem:jos}
{\sf (1)} \ Part~{\sf (i)} of Theorem~\ref{thm:inv-fin-generated} appears in~\cite[Lemma~4.6(ii)]{jos77}, with another proof. Namely, in place of reference to~\cite{po86}, one can provide the following simple argument for the implication
\\[.5ex]
\centerline{{\it $\ca^{U(L)}$ is finitely generated $\Longrightarrow$ $\ca$ is finitely generated}:}

Let $f_1,\dots,f_l$ be a set of generators for $\ca^{U(L)}$. We may also assume that each $f_i$ is a
$T$-eigenvector. Then the $\BC$-linear span of $L{\cdot}f_i\subset\ca$ is a simple finite-dimensional $L$-module, 
say $\mathsf{V}_i$. It is easily seen that the $\BC$-algebra generated by $\sum_{i=1}^l \mathsf{V}_i$ is 
$L$-stable and contains all simple $L$-modules from $\ca$. Hence the finite-dimensional space
$\sum_{i=1}^l \mathsf{V}_i$ generates $\ca$.

{\sf (2)} \ There is also an alternate approach to part~{\sf (ii)}. By a result of Kostant, the algebra 
$\BC[\n^*]^N$ is a multiplicity free $L$-module, see~\cite[4.5]{jos77}. Being the algebra of invariants of a 
linear action, $\BC[\n^*]^N$ is also integrally closed. Therefore, $\n^*\md N$ is a spherical $L$-variety.
By~\cite[Theorem\,10]{po86}, this implies that $\n^*\md N$ has rational singularities.
\end{rmk}

\begin{rmk}    \label{rem:popov+brion}
{\sf (1)} \ The general treatment of ``stable properties'' is due to V.\,Popov~\cite{po86}, but the assertion 
that relates rationality of singularities for $\ca$ and $\ca^{U(L)}$ goes back to H.\,Kraft and D.\,Luna, 
see exposition in Brion's thesis~\cite[Chap.\,I(c)]{br81}. It is worth noting that, for a rational $L$-algebra $\ca$, the equivalence:
\\[.6ex]
\centerline{\it $\ca$ is finitely generated $\Longleftrightarrow$ $\ca^{U(L)}$ is finitely generated}
holds over an algebraically closed field of an arbitrary characteristic, see~\cite[Theorem\,16.2]{gr97}.

{\sf (2)} \ A list of ``stable properties'' is found in~\cite[Section\,6]{po86}. 
\end{rmk}

\begin{ex}   \label{ex:non-polynom}
{\sf (1)} \ Take $\g=\mathfrak{sl}_5$ and $\n=\n_{\eus T}$ with ${\eus T}=\{\ap_3,\ap_4\}$. Then 
$\dim\n=7$, $\eus K(\n)=\eus K=\{\beta_1,\beta_2\}$, and $\tilde\n=\ut$. Hence $\ind\n=5$. Let 
$\{e_{ij}\mid 1\le i<j\le 5, j=4,5\}$ be the matrix units corresponding to $\n$ (i.e., a basis for $\n$). We
regard them as (linear) functions on $\n^-\simeq\n^*$. Here $e_{15}\in \g_{\beta_1}$ and
$e_{24}\in \g_{\beta_2}$. Obviously, $e_{15}$ and 
$f_{12}=\begin{vmatrix} e_{14} & e_{15} \\ e_{24} & e_{25} \end{vmatrix}$ belong to $\gS(\n)^U$. For the 
standard Levi subalgebra $\el$ of $\p={\sf norm}_\g(\n)$, one has $[\el,\el]=\sltri$, hence $\gS(\n)^N$ is 
an $\sltri$-module. Taking the $\sltri$-modules generated by $e_{15}$ and $f_{12}$, one obtains six functions that generate $\gS(\n)^N$: 
\[
  e_{15},e_{25},e_{35},f_{12}=\begin{vmatrix} e_{14} & e_{15} \\ e_{24} & e_{25} \end{vmatrix}, \ 
 f_{13}= \begin{vmatrix} e_{14} & e_{15} \\ e_{34} & e_{35} \end{vmatrix}, \ 
  f_{23}=\begin{vmatrix} e_{24} & e_{25} \\ e_{34} & e_{35} \end{vmatrix} .
\]
Since $f_{12}\not\in \BC[e_{15},e_{25},e_{35}]$ and a minimal generating system can be chosen to be 
$SL_3$-invariant, the set of generators above is minimal. These generators satisfy the relation 
$e_{15}f_{23}-e_{25}f_{13}+e_{35}f_{12}=0$, hence $\n^*\md N$ is a hypersurface. Under passage to 
$\tilde N$ or $U$, the situation improves, because $\gS(\n)^{\tilde N}=\gS(\n)^U=\BC[e_{15}, f_{12}]$.

{\sf (2)} \ More generally, for $\g=\mathfrak{sl}_N$ and $\eus T=\{\ap_{N-k},\dots,\ap_{N-1}\}$ with 
$k\le N/2$, one has 
\[  
\dim\n_\eus T=\frac{k}{2}(2N-1-k), \quad \eus K_\eus T=\{\beta_1,\dots,\beta_k\}, \text{ and }\ 
\ind\n_\eus T=\frac{k}{2}(2N+1-3k).
\] 
Here $[\el,\el]=\mathfrak{sl}_{N-k}$ and $\gS(\n_\eus T)^U=\BC[F_1,\dots, F_k]$, where $F_i$ is the ``anti-principal'' minor of degree $i$, i.e.,
\[
   F_1=e_{1,N}, \ F_2=\begin{vmatrix} e_{1,N-1} & e_{1,N} \\ e_{2,N-1} & e_{2,N} \end{vmatrix},\dots,
   F_k=\begin{vmatrix} e_{1,N-k+1} & \cdots & e_{1,N} \\ \cdots & \cdots & \cdots \\
   e_{k,N-k+1} & \cdots & e_{k,N} \end{vmatrix} .
\]
The $SL_{N{-}k}$-module generated by $F_i$, $\lg SL_{N-k}{\cdot}F_i\rg$, is isomorphic to 
$\wedge^i \mathbb V$, where $\mathbb V$ is the standard $SL_{N{-}k}$-module. It is also easily seen 
that 
$F_j$ does not belong to the $\BC$-algebra generated by $\sum_{i=1}^{j-1}\lg SL_{N-k}{\cdot}F_i\rg$.
Therefore, the minimal number of generators of $\eus Z(\n_\eus T)$ is 
\[
     M_\eus T=\sum_{j=1}^k \dim \wedge^i\mathbb V=\sum_{j=1}^k \genfrac{(}{)}{0pt}{}{N-k}{j} .
\] 
If $k=1$, then $\n_\eus T$ is abelian and $M_\eus T=\ind\n_\eus T=N-1$. If $k\ge 2$,
then $M_\eus T\ge \ind \n_\eus T$ and the equality holds only for $k=2, N=4$.

{\sf (3)} \ For $\GR{E}{6}$ and ${\eus T}=\{\ap_2\}$, one has $\dim\n_\eus T=25$,  
$\eus K_\eus T=\{\beta_1,\beta_2,\beta_3\}$, and $\ind\n_\eus T=13$. Here $[\el,\el]=
\tri\oplus\mathfrak{sl}_{5}$ and $\gS(\n_\eus T)^U=\BC[F_1,F_2,F_3]$, where $\deg F_i=1,2,4$ for
$i=1,2,3$, respectively. Using the $\el$-modules generated by $F_1$ and $F_2$, one can prove that $M_\eus T\ge 15$. 
\end{ex}

%%%%%%%%%%%%  Section 7  %%%%%%%%%
\section{On the square integrable nilradicals}
\label{sect:square-int}

\noindent
Recall that any nilradical $\n=\n_{\eus T}$ is equipped with the canonical $\BZ$-grading 
$\n=\bigoplus_{i=1}^{d_{\eus T}}\n(i)$, where $d_{\eus T}=\sum_{\ap\in {\eus T}}[\theta:\ap]$ and 
$\n(d_{\eus T})=\z(\n)$ is the centre of $\n$. Accordingly, one obtains the partition 
$\Delta(\n_{\eus T})=\bigsqcup_{i=1}^{d_{\eus T}}\Delta_{\eus T}(i)$, see Section~\ref{subs:optim}.

Clearly, $\ind\n\ge\dim\z(\n)$ and $\gS(\z(\n))\subset \gS(\n)^N$. Following~\cite{ag85, ag03}, 
we say that $\n$ is {\it square integrable}, if $\ind\n=\dim\z(\n)$. Then 
$\n^\xi=\z(\n)$ for any $\xi\in\n^*_{\sf reg}$ and hence $\cF(\n)=\z(\n)$ is abelian. Since
$\trdeg\gS(\n)^N=\dim\z(\n)$, the extension  $\gS(\z(\n))\subset \gS(\n)^N$ is algebraic. This 
implies that $\gS(\n)^N=\gS(\z(\n))$ and hence $\gS(\n)$ is a free $\gS(\n)^N$-module.
It is easily seen that a Heisenberg  
algebra $\h$ is square integrable, with $\ind\h=1$.
Using $\eus K(\n)$ and our description of $\n^\bxi$, we classify all square integrable nilradicals. 
Recall that $\eus K_{\eus T}=\eus K(\n_{\eus T})$ is a poset.

\begin{lm}    \label{lm:beta-2}
 For any $\n=\n_{\eus T}$, we have $\min\eus K_{\eus T}\subset\bigl(\Delta_{\eus T}(1)\cup \Delta_{\eus T}(2)\bigr)$.
\end{lm}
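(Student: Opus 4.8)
The plan is to pick an arbitrary minimal element $\beta_j\in\min\eus K_{\eus T}$ and show that it lies in $\Delta_{\eus T}(1)\cup\Delta_{\eus T}(2)$, i.e.\ that $\sum_{\ap\in{\eus T}}[\beta_j:\ap]\in\{1,2\}$. Recall that $\beta_j$ is the highest root of the irreducible subsystem $\Delta\lg j\rg$ with simple roots $\supp(\beta_j)$, and that $\eus T\subset\bigcup_{\beta_i\in\eus K_{\eus T}}\Phi(\beta_i)$ by Lemma~\ref{lm:optim}(2). Since $\beta_j$ is a \emph{minimal} element of $\eus K_{\eus T}$, for every other $\beta_i\in\eus K_{\eus T}$ we have $\beta_i\not\prec\beta_j$, so either $\beta_i\succ\beta_j$ or $\beta_i$ and $\beta_j$ are incomparable. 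In the incomparable case $\supp(\beta_i)\cap\supp(\beta_j)=\varnothing$, hence $\Phi(\beta_i)\cap\supp(\beta_j)=\varnothing$; in the case $\beta_i\succ\beta_j$ one has $\supp(\beta_i)\supsetneq\supp(\beta_j)$ but $\Phi(\beta_i)\subset\supp(\beta_i)\setminus\supp(\beta_j)$ (the simple roots in $\Phi(\beta_i)$ are exactly those that disappear when one passes to the descendants, so they are not in any $\supp(\beta_{i'})$ with $\beta_{i'}\prec\beta_i$, and in particular not in $\supp(\beta_j)$). Consequently $\eus T\cap\supp(\beta_j)\subset\Phi(\beta_j)$.

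The key computation is then: $\sum_{\ap\in{\eus T}}[\beta_j:\ap]=\sum_{\ap\in{\eus T}\cap\supp(\beta_j)}[\beta_j:\ap]\le\sum_{\ap\in\Phi(\beta_j)}[\beta_j:\ap]$. Here one uses the standard fact that a simple root $\ap$ appears in the highest root $\beta_j$ of $\Delta\lg j\rg$ with coefficient $[\beta_j:\ap]=1$ whenever $\ap\in\Phi(\beta_j)$. Indeed, $\Phi(\beta_j)=\Pi\cap\eus H_{\beta_j}$ consists of the simple roots lying in the Heisenberg subset of $\Delta\lg j\rg$, and a simple root $\ap$ lies in $\eus H_{\beta_j}$ precisely when $(\beta_j,\ap)\ne0$; for the highest root of an irreducible system this forces $[\beta_j:\ap]=1$ (equivalently, these are exactly the simple roots adjacent in the affine Dynkin diagram to the affine node, each with mark $1$). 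Since $\#\Phi(\beta_j)\le 2$, we get $\sum_{\ap\in{\eus T}}[\beta_j:\ap]\le 2$. Finally this sum is $\ge 1$ because $\beta_j\in\Delta(\n_{\eus T})$, so its support meets $\eus T$ (otherwise $\beta_j$ would be a root of the Levi $\el_{\eus T}$). Hence $\beta_j\in\Delta_{\eus T}(1)\cup\Delta_{\eus T}(2)$, as required.

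The main obstacle is pinning down cleanly the identity $[\beta_j:\ap]=1$ for $\ap\in\Phi(\beta_j)$ and the inclusion $\eus T\cap\supp(\beta_j)\subset\Phi(\beta_j)$; both are "well-known" facts about the cascade, but they need to be invoked precisely. For the first I would work inside the irreducible system $\Delta\lg j\rg$ and cite the characterisation of $\Phi(\beta_j)$ via the Heisenberg subset together with the elementary fact about coefficients of highest roots; alternatively one can read it off the case-by-case list: $\#\Phi(\beta_j)=2$ happens exactly for type $\GR{A}{n}$ ($n\ge2$), where $\beta_j=\ap_1+\dots+\ap_n$ and the two end roots have coefficient $1$, and $\#\Phi(\beta_j)=1$ in all other types, where the unique root of $\Phi(\beta_j)$ is the one adjacent to the affine node and again has mark $1$ (for $\GR{A}{1}$, $\beta_j=\ap$ itself with coefficient $1$). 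The inclusion $\eus T\cap\supp(\beta_j)\subset\Phi(\beta_j)$ is exactly the minimality argument of the first paragraph. Everything else is bookkeeping with the grading $\Delta(\n_{\eus T})=\bigsqcup_i\Delta_{\eus T}(i)$.
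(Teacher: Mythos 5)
Your overall strategy is sound and essentially parallels the paper's: pick a minimal $\beta_j\in\eus K_{\eus T}$, use minimality plus Lemma~\ref{lm:optim}(2) to show that $\eus T\cap\supp(\beta_j)\subset\Phi(\beta_j)$, and then control $\sum_{\ap\in\eus T}[\beta_j:\ap]$. The reduction to $\Phi(\beta_j)$ is argued correctly.

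However, the ``standard fact'' you invoke --- that $[\beta_j:\ap]=1$ whenever $\ap\in\Phi(\beta_j)$ --- is false in every type other than $\GR{A}{n}$. The affine node $\ap_0=-\beta_j$ of the extended Dynkin diagram of $\Delta\lg j\rg$ has mark $1$, but the simple root adjacent to it has mark $2$ whenever $\Delta\lg j\rg$ is not of type $\GR{A}{n}$. Concretely, for $\GR{B}{n}$ and $\GR{D}{n}$ one has $\Phi(\beta_j)=\{\ap_2\}$ with $[\beta_j:\ap_2]=2$; for $\GR{C}{n}$, $\Phi(\beta_j)=\{\ap_1\}$ with $[\beta_j:\ap_1]=2$; and similarly for the exceptional types. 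Because of this, your derivation $\sum_{\ap\in\Phi(\beta_j)}[\beta_j:\ap]\le\#\Phi(\beta_j)\cdot 1\le 2$ is not supported by what is actually true. The conclusion does hold, but only because of a compensation your argument does not record: whenever $[\beta_j:\ap]=2$ one automatically has $\#\Phi(\beta_j)=1$ (since $\#\Phi(\beta_j)=2$ forces type $\GR{A}{p}$, where the two coefficients are $1$). The correct uniform statement is $\sum_{\ap\in\Phi(\beta_j)}[\beta_j:\ap]\le 2$, with equality unless $\beta_j$ is simple, and this is exactly what the paper establishes by a short three-way case split on $\Phi(\beta)$: $\beta=\ap$ (level $1$), $\Phi(\beta)=\{\ap\}$ with $\ap\ne\beta$ so $[\beta:\ap]=2$ (level $2$), or $\Phi(\beta)=\{\ap,\ap'\}$ in type $\GR{A}{p}$ (level $1$ or $2$). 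You should replace the false coefficient-$1$ claim by this case analysis, or by the correct statement that the marks of the simple root(s) adjacent to the affine node sum to $2$.
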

\begin{proof}
 Take $\beta\in \min \eus K_{\eus T}$ and $\ap\in\Phi(\beta)\cap {\eus T}$. Then there are the following possibilities:
 \begin{itemize}
\item $\beta=\ap$. Then $\beta\in\Delta_{\eus T}(1)$;
\item $\beta\ne\ap$ and $\Phi(\beta)=\{ \ap \}$. Recall that $\beta$ is the highest root in the root system with basis $\supp(\beta)$. Since $\beta$ is a minimal element of $\eus K_{\eus T}$, we have
$\supp(\beta)\cap {\eus T}=\{\ap\}$. Then $[\beta:\ap]=2$ and $\beta\in\Delta_{\eus T}(2)$;
\item $\Phi(\beta)=\{\ap,\ap'\}$. Here $\supp(\beta)$ is a basis for the root system $\GR{A}{p}$ 
($p\ge 2$) and $\supp(\beta)\cap {\eus T}=\{\ap,\ap'\}$. Then either $\ap'\in {\eus T}$ and 
$\beta\in\Delta_{\eus T}(2)$, or $\ap'\not\in {\eus T}$ and $\beta\in\Delta_{\eus T}(1)$.  \qedhere
\end{itemize}
\end{proof}

\begin{thm}    \label{thm:square-int}
Let $\n=\n_{\eus T}$ be an arbitrary nilradical. Then \\[.6ex]
\centerline{$\n$ is square integrable $\Longleftrightarrow$ \ $d_{\eus T}\le 2$ and
$\eus K_{\eus T}\subset \Delta_{\eus T}(d_{\eus T})=\Delta(\z(\n))$.} 
\end{thm}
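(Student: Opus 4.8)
The plan is to reduce the statement to the explicit description of $\n^\bxi$ obtained in Theorem~\ref{thm:cascade-point-stab} and then to run a short combinatorial check against the canonical $\BZ$-grading of $\Delta(\n_{\eus T})$. First I would observe that a cascade point $\bxi\in\ce_0\subset\n^*_{\sf reg}$ is regular, so $\dim\n^\bxi=\ind\n$; moreover $\z(\n)$ lies in every stabiliser, and $\z(\n)=\bigoplus_{\gamma\in\Delta_{\eus T}(d_{\eus T})}\g_\gamma$ by Lemma~\ref{lm:easy-Z-grad}. Since $\n^\bxi$ is $T$-stable by Theorem~\ref{thm:cascade-point-stab}{\sf (i)}, this yields the chain of equivalences
\[
  \n\ \text{is square integrable}\iff \ind\n=\dim\z(\n)\iff \n^\bxi=\z(\n)\iff\Delta(\n^\bxi)=\Delta_{\eus T}(d_{\eus T}),
\]
so everything comes down to comparing $\Delta(\n^\bxi)=\{\beta_j-\gamma\mid 1\le j\le k,\ \gamma\in\eus C_\n(j)\}\sqcup\eus K_{\eus T}$ (Theorem~\ref{thm:cascade-point-stab}{\sf (ii)}) with $\Delta_{\eus T}(d_{\eus T})$, while keeping track of the additive grading function $\gamma\mapsto\sum_{\ap\in\eus T}[\gamma:\ap]$, which is constant equal to $i$ on $\Delta_{\eus T}(i)$.

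For the implication ``$\Leftarrow$'', suppose $d_{\eus T}\le 2$ and $\eus K_{\eus T}\subseteq\Delta_{\eus T}(d_{\eus T})$. The case $d_{\eus T}=1$ is immediate, as then $\n$ is abelian and $\z(\n)=\n$. For $d_{\eus T}=2$ I would verify the two inclusions making up $\Delta(\n^\bxi)=\Delta_{\eus T}(2)$ in turn. For ``$\subseteq$'': a root $\beta_j-\gamma$ with $\gamma\in\eus C_\n(j)=\Delta(\h_j)\setminus\Delta(\n)$ has $\sum_{\ap\in\eus T}[\gamma:\ap]=0$, while $\beta_j\in\eus K_{\eus T}\subseteq\Delta_{\eus T}(2)$, hence $\beta_j-\gamma\in\Delta_{\eus T}(2)$; and $\eus K_{\eus T}\subseteq\Delta_{\eus T}(2)$ by hypothesis. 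For ``$\supseteq$'': given $\delta\in\Delta_{\eus T}(2)$, use the partition~\eqref{eq:decomp-Delta} to write $\delta\in\eus H_{\beta_i}$; since $\delta\curle\beta_i$ and $\Delta(\n)$ is an upper ideal of $(\Delta^+,\curle)$, we get $\beta_i\in\Delta(\n)$, i.e.\ $\beta_i\in\eus K_{\eus T}$, so $\beta_i\in\Delta_{\eus T}(2)$ as well. If $\delta=\beta_i$ we are done; otherwise the Heisenberg structure of $\h_i$ gives $\mu:=\beta_i-\delta\in\Delta(\h_i)\setminus\{\beta_i\}$, and additivity yields $\sum_{\ap\in\eus T}[\mu:\ap]=2-2=0$, so $\mu\in\eus C_\n(i)$ and $\delta=\beta_i-\mu\in\Delta(\n^\bxi)$ by Theorem~\ref{thm:cascade-point-stab}{\sf (ii)}.

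For the implication ``$\Rightarrow$'', suppose $\n$ is square integrable, i.e.\ $\Delta(\n^\bxi)=\Delta_{\eus T}(d_{\eus T})$. Since $\eus K_{\eus T}\subseteq\Delta(\n^\bxi)$ always (Theorem~\ref{thm:cascade-point-stab}{\sf (ii)}), the inclusion $\eus K_{\eus T}\subseteq\Delta_{\eus T}(d_{\eus T})$ follows at once. To force $d_{\eus T}\le 2$, I would invoke Lemma~\ref{lm:beta-2}, which gives $\min\eus K_{\eus T}\subseteq\Delta_{\eus T}(1)\cup\Delta_{\eus T}(2)$; combined with $\eus K_{\eus T}\subseteq\Delta_{\eus T}(d_{\eus T})$ and the pairwise disjointness of the pieces $\Delta_{\eus T}(i)$, the hypothesis $d_{\eus T}\ge 3$ would give $\min\eus K_{\eus T}=\varnothing$, which is impossible since $\theta=\beta_1\in\eus K_{\eus T}$ for any nonzero nilradical.

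The argument is largely mechanical once Theorem~\ref{thm:cascade-point-stab} and Lemma~\ref{lm:beta-2} are available. The one step that will need genuine care is the reverse inclusion $\Delta_{\eus T}(2)\subseteq\Delta(\n^\bxi)$ in the ``$\Leftarrow$'' direction, where one must combine the Heisenberg decomposition~\eqref{eq:decomp-Delta}, the upper-ideal property of $\Delta(\n)$ (which is what puts $\beta_i$ back into $\eus K_{\eus T}$), and the additivity of $\sum_{\ap\in\eus T}[\,\cdot\,:\ap]$ on root differences, in order to certify that every weight-$2$ root has the form $\beta_i-\mu$ with $\mu\in\eus C_\n(i)$.
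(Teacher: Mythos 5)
Your proof is correct and follows the same main line as the paper's: everything is reduced to the explicit description of the cascade-point stabiliser $\n^\bxi$ in Theorem~\ref{thm:cascade-point-stab}, the implication ``$\Rightarrow$'' uses Lemma~\ref{lm:beta-2} exactly as the paper does, and the implication ``$\Leftarrow$'' is checked against the canonical $\BZ$-grading. The genuine difference is in the ``$\Leftarrow$'' direction for $d_{\eus T}=2$: the paper splits into the two subcases ${\eus T}=\{\ap\}$ (with $[\theta:\ap]=2$) and ${\eus T}=\{\ap,\ap'\}$ (with $[\theta:\ap]=[\theta:\ap']=1$), and in each subcase first establishes that $\eus K_{\eus T}$ has a unique minimal element before verifying $\n^\bxi\subset\n(2)$; you avoid both the case split and the auxiliary ``unique minimal element'' step by working uniformly with the additive degree $\gamma\mapsto\sum_{\ap\in{\eus T}}[\gamma:\ap]$ together with the explicit set $\Delta(\n^\bxi)$ from Theorem~\ref{thm:cascade-point-stab}{\sf (ii)}, which is a cleaner route. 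One minor remark: the reverse inclusion $\Delta_{\eus T}(2)\subseteq\Delta(\n^\bxi)$ that you verify by hand (via the Heisenberg decomposition and the upper-ideal property of $\Delta(\n)$) is actually automatic, since $\z(\n)\subseteq\n^\xi$ for every $\xi\in\n^*$, in particular $\z(\n)\subseteq\n^\bxi$; the paper exploits exactly this to confine the work to the single inclusion $\n^\bxi\subseteq\n(2)$.
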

\begin{proof}
{\sf (1)} \ Let $\bxi\in\ce_0$ be a cascade point. Then $\n^\bxi\supset \bigoplus_{\beta\in\eus K_{\eus T}}\g_\beta$.
If $\n$ is square integrable, then $\n^\bxi\subset \n(d_{\eus T})$. Hence $\eus K_{\eus T}\subset \Delta_{\eus T}(d_{\eus T})$. By Lemma~\ref{lm:beta-2}, this is only possible, if $d_{\eus T}\le 2$. Thus,
the implication ``$\Rightarrow$" is proved.

{\sf (2)} \  If $d_{\eus T}=1$, then the assumption on $\eus K_{\eus T}$ is vacuous and $\n$ is abelian, hence square integrable. Therefore, to prove the implication
``$\Leftarrow$", we may assume that $d_{\eus T}=2$. Then $\z(\n)=\n(2)=[\n,\n]$ and 
$\eus K_{\eus T}\subset\Delta_{\eus T}(2)$. Now, there are two possibilities for ${\eus T}$.

\textbullet \ \ Suppose that ${\eus T}=\{\ap\}$ and $[\theta:\ap]=2$. \\
Set $\beta_i=\Phi^{-1}(\ap)$. Since $\beta_i\in\eus K_{\eus T}\subset\Delta_{\eus T}(2)$, we have $[\beta_i:\ap]=2$. Let us prove that
$\beta_i$ is the unique minimal element of $\eus K_{\eus T}$.
Assume that $\beta_i\not\in\min \eus K_{\eus T}$, i.e., there is $\beta'\in \eus K_{\eus T}$
such that $\beta'\prec \beta$. Then $\supp(\beta')\subset\supp(\beta_i)\setminus\{\ap\}$ and
$\supp(\beta')\cap {\eus T}\ne \varnothing$. A contradiction! Hence $\beta_i$ is minimal in 
$\eus K_{\eus T}$. Assume that there is another minimal element of $\eus K_{\eus T}$, say $\tilde\beta$. Then 
$\supp(\beta_i)$ and $\supp(\tilde\beta)$ are disjoint, and we must have $\supp(\tilde\beta)\cap {\eus T}\ne \varnothing$, which is impossible.
This contradiction shows that $\min\eus K_{\eus T}=\{\beta_i\}$.

Since $B{\cdot}\bxi$ is dense in $\n^*$, $B{\cdot}\bxi\subset \n^*_{\sf reg}$, and $\n(2)$ is $B$-stable, it 
suffices to prove that $\n^\bxi\subset\n(2)$ (and then, actually, $\n^\bxi=\n(2)$).  Recall that 
$\n=\bigoplus_{\beta_j\in\eus K_{\eus T}}(\h_j\cap\n)$. For the unique minimal element $\beta_i\in\eus K_{\eus T}$, 
we have $\h_i\subset\n$ and hence $\h_i\cap\n^\bxi=\g_{\beta_i}$. If $\beta_i\prec\beta_j$, then
$[\beta_j:\ap]=2$ and the contribution from $\h_j\cap\n$ to $\n^\bxi$ is
\[
     \h_j\cap\n^\bxi=\g_{\beta_j}\oplus\bigl(\bigoplus_{\gamma\in\eus C_\n(j)}\g_{\beta_j-\gamma}\bigr) ,
\]
see Theorem~\ref{thm:cascade-point-stab}. Since $\eus K_{\eus T}\subset \Delta_{\eus T}(2)$, we have 
$\g_{\beta_j}\subset \n(2)$. The very
definition of $\eus C_{\n}(j)$ says that $\gamma\not\in\Delta_{\eus T}$. Hence $[\gamma:\ap]=0$ and 
$[\beta_j-\gamma:\ap]=2$. Thus, $\beta_j-\gamma\in \Delta_{\eus T}(2)$ and $\h_j\cap\n^\bxi\subset \n(2)$.
 
\textbullet \ \ Suppose that ${\eus T}=\{\ap,\ap'\}$ and $[\theta:\ap]=[\theta:\ap']=1$. \\ 
The argument here is similar to that in the previous part. Set $\beta=\Phi^{-1}(\ap)$ and
$\beta'=\Phi^{-1}(\ap')$. Using the hypothesis that $\eus K_{\eus T}\subset\Delta_{\eus T}(2)$, one first 
proves that $\beta=\beta'$ (hence $\Phi(\beta)=\{\ap,\ap'\}$) and that $\beta$ is the unique minimal 
element of $\eus K_{\eus T}$. Then the use of Theorem~~\ref{thm:cascade-point-stab} allows us to check 
that $\n^\bxi\subset\n(2)$.
\end{proof}

Using Theorem~\ref{thm:square-int}, it is not hard to get the list of square integrable nilradicals in all 
simple Lie algebras. For $d_{\eus T}=1$, $\n_{\eus T}$ is abelian and these cases are well known. If 
$d_{\eus T}=2$, then 
$[\n_{\eus T}, [\n_{\eus T},\n_{\eus T}]]=0$. The 
condition that $\eus K_{\eus T}$ belongs to the highest graded component of $\n_{\eus T}$ is quite 
strong. Therefore, not all nilradicals with $d_{\eus T}=2$ are square integrable.

\begin{ex}   \label{ex:metabelian} 
(1) The  list of square integrable nilradicals with $d_{\eus T}=2$ is given below.

\begin{tabular}{ll}
\textbullet \ \ $\GR{A}{n}$, \ ${\eus T}=\{\ap_{k}, \ap_{n+1-k}\}$ with $2k< n+1$; & 
\textbullet \ \ $\GR{C}{n}$, \ ${\eus T}=\{\ap_{k}\}$ with $k < n$; \\
\textbullet \ \ $\GR{B}{n}$, \ ${\eus T}=\{\ap_{2k}\}$ with $2k\le n$; &
\textbullet \ \ $\GR{D}{n}$, \ ${\eus T}=\{\ap_{2k}\}$ with $k\le [n/2]-1$; \\
\textbullet \ \ $\GR{D}{2n+1}$, \ ${\eus T}=\{\ap_{2n},\ap_{2n+1}\}$; & 
\textbullet \ \ $\GR{E}{6}$, \ ${\eus T}=\{\ap_6\}$ or ${\eus T}=\{\ap_1,\ap_5\}$;  \\ 
\textbullet \  \ $\GR{E}{7}$, \ ${\eus T}=\{\ap_6\}$ or ${\eus T}=\{\ap_2\}$; 
& \textbullet \ \ $\GR{E}{8}$, \ ${\eus T}=\{\ap_1\}$ or ${\eus T}=\{\ap_7\}$;  \\
\textbullet \ \ $\GR{F}{4}$, \ ${\eus T}=\{\ap_4\}$ or ${\eus T}=\{\ap_1\}$; & \textbullet \ \ $\GR{G}{2}$, \ ${\eus T}=\{\ap_2\}$.
\end{tabular}
\\[.9ex]  \indent
(2) The list contains all the cases in which $\n=\h_1$ is the 
Heisenberg nilradicals. For 
the exceptional algebras, this corresponds to ${\eus T}$ indicated first; whereas for the classical series, this 
corresponds to the cases with $k=1$.

(3) Using~\cite{CP}, one verifies that $\n_{\eus T}$ has no \CP\  for $\GR{B}{n}$ with $k\ge 2$, 
$(\GR{E}{8}, \ap_7)$, and $(\GR{F}{4}, \ap_1)$.  

(4) For $\GR{D}{2n}$ and ${\eus T}=\{\ap_{2n-1},\ap_{2n}\}$, $\n_{\eus T}$  is not square integrable. Indeed, here $d_{\eus T}=2$, 
$\eus K_{\eus T}=\{\beta_1,\beta_3,\dots,\beta_{2n-3},\beta_{2n-1},\beta_{2n}\}$, and
$\min\eus K_{\eus T}$ contains two elements, $\ap_{2n}=\beta_{2n-1}$ and $\ap_{2n-1}=\beta_{2n}$.
(We use the notation of Appendix~\ref{sect:tables}.) Hence
$\beta_{2n-1},\beta_{2n}\in \Delta_{\eus T}(1)$, while the other elements of $\eus K_{\eus T}$ belong to 
$\Delta_{\eus T}(2)$. Here $\ind\n_{\eus T}=2n^2{-}3n{+}3$ and 
$\dim\z(\n_{\eus T})=2n^2{-}3n{+}1$.  
(Of course, there are other nilradicals with $d_\eus T=2$ that are not square integrable.)
\end{ex}

\begin{rmk}
A real nilpotent Lie group $Q$ has a unitary square integrable representation if and only if 
$\ind\q=\dim\z(\q)$~\cite[Theorem\,1]{MW}. For this reason, A.G.\,Elashvili applied the term ``square 
integrable" to the nilpotent Lie algebras $\q$ satisfying that equality, over an algebraically closed field of 
characteristic zero~\cite{ag85}, cf. also~\cite{ag03}. Afterwards, the theory of square integrable 
representations was extended to the setting of arbitrary Lie groups~\cite{d82}. The relevant notions 
are those of a {\it quasi-reductive\/} Lie group and a coadjoint orbit of {\it reductive type}, see~\cite{DKT}. 
Therefore, Theorem~\ref{thm:square-int} provides a classification of the quasi-reductive nilradicals in 
$\g$. Various results on quasi-reductive {\it seaweed} (=\,biparabolic) subalgebras of $\g$ are obtained 
in~\cite{bm, MY12}.
\end{rmk}

%%%%%%%%%%%%  Section 8  %%%%%%%%%
\section{When is the quotient morphism equidimensional?}
\label{sect:EQ-i-module}
\noindent
It is well known that if a graded polynomial algebra $\ca$ is a free module over a graded subalgebra
$\cb$, then $\cb$ is necessarily polynomial. Furthermore, if $\cb$ is a graded polynomial subalgebra of  $\ca$, then $\ca$ is a free $\cb$-module if and only if the induced morphism 
$\pi: \spe(\ca)\to \spe(\cb)$ is {\it equidimensional}, i.e., $\dim\pi^{-1}(\pi(x))=\dim \ca-\dim\cb$ for any 
$x\in \spe(\ca)$~\cite[Prop.\,17.29]{gerry}. By a theorem of Chevalley, dominant equidimensional 
morphisms are open. Therefore, in the graded situation $\pi$ is also onto, see~\cite[2.4]{VG}.

In this section, we point out certain nilradicals $\n$ in $\g$ such that $\ca=\gS(\n)=\BC[\n^*]$ is a free 
module over $\cb=\gS(\n)^U=\BC[\n^*]^U$. The last algebra is always polynomial (Lemma~\ref{lm:4.1}), 
hence our task is to guarantee that the quotient morphism 
\[
   \pi: \spe(\ca)=\n^*\to \n^*\md U=\spe(\cb)
\] 
is equidimensional. Since $\gS(\n)^U=\gS(\n)^{\tilde N}$ (Lemma~\ref{lm:4.3}), the {\bf optimal} nilradicals 
of such type provide examples, where $\gS(\n)$ is a free module over its Poisson  centre 
$\gZ(\n)=\gS(\n)^N$. We begin with a simple observation.

\begin{lm}       \label{lm:K(n)=2}
If \ $\#\eus K(\n)\le 3$, then $\gS(\n)$ is a free $\gS(\n)^U$-module.
\end{lm}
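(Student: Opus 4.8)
The plan is to reduce the problem to a statement about the codimension of the non-maximal orbit locus, and then to verify equidimensionality of $\pi$ fibre-by-fibre using the cascade description of $\n^\bxi$. By the criterion recalled at the start of Section~\ref{sect:EQ-i-module}, $\gS(\n)$ is a free $\gS(\n)^U$-module if and only if the quotient morphism $\pi\colon\n^*\to\n^*\md U$ is equidimensional, i.e.\ every fibre of $\pi$ has dimension $\dim\n-\#\eus K(\n)$ (using that $\trdeg\gS(\n)^U=\#\eus K(\n)$ by Theorem~\ref{thm:main5}). Since $\gS(\n)^U=\gS(\n)^{\tilde N}$ and $\tilde N{\cdot}\ce$ is dense in $\n^*$ by Theorem~\ref{thm:dense-saturation}, the generic fibre already has the right dimension; the issue is that some special fibres might be too large. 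First I would note that a fibre of $\pi$ over a non-generic point is contained in the ``bad locus'', so it suffices to bound $\codim_{\n^*}(\n^*\setminus \tilde N{\cdot}\ce)$ from below and to bound the dimension of each individual fibre from above.

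The key step is therefore a dimension count driven by $\#\eus K(\n)\le 3$. Because $\gS(\n)^U$ is polynomial with $\#\eus K(\n)$ generators, say $F_1,\dots,F_k$ with $k\le 3$, the fibres of $\pi$ are the common zero loci of $k$ shifted functions $F_i-c_i$, and a fibre has the expected dimension $\dim\n-k$ precisely when $F_1-c_1,\dots,F_k-c_k$ form a regular sequence at every point of that fibre. For $k=1$ this is automatic: a single nonconstant polynomial has all fibres of codimension exactly $1$ (no embedded or excess components in a hypersurface defined by one equation, since $\gS(\n)$ is a polynomial ring, hence a Cohen--Macaulay domain, and one nonzerodivisor cuts down dimension by exactly one on each component). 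For $k=2$ or $k=3$ I would invoke the following mechanism: the quotient morphism $\pi$ is dominant with irreducible source, so all fibres have dimension $\ge \dim\n-k$; and the locus where a fibre jumps in dimension has codimension at least $2$ in $\n^*\md U$ (this is the standard semicontinuity estimate: if some fibre had dimension $\ge\dim\n-k+1$, the image of the corresponding ``jump locus'' in the base would have codimension $\ge 2$). Since $\n^*\md U\cong\BC^k$ with $k\le 3$, and $\gS(\n)$ is Cohen--Macaulay, a more careful bookkeeping is needed; the cleanest route is to observe that for $k\le 2$ one can appeal directly to miracle flatness: a morphism from a Cohen--Macaulay scheme to a regular scheme of dimension $\le 2$ whose generic fibre has the expected dimension, and all of whose fibres have dimension $\le$ expected, is flat. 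So the whole matter collapses to: \emph{every} fibre of $\pi$ has dimension exactly $\dim\n-k$, equivalently $\le \dim\n-k$.

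The remaining and genuinely combinatorial step — which I expect to be the main obstacle — is proving the fibre-dimension bound when $\#\eus K(\n)=3$. Here one must show $\dim\pi^{-1}(y)\le\dim\n-3$ for all $y$, i.e.\ the bad locus $\n^*\setminus \tilde N{\cdot}\ce$ cannot carry a fibre of excess dimension. I would do this by stratifying according to which cascade coordinates vanish: writing $\ce=\g_{-\beta_1}\oplus\g_{-\beta_2}\oplus\g_{-\beta_3}$ (ordered so that $\beta_1=\theta$ is the top), each point of $\n^*$ maps under $\pi$ to a triple of values, and the locus where, say, the ``$\beta_3$-component'' of the leading invariant degenerates is itself an orbit-variety for a smaller optimal nilradical or its optimisation, whose own quotient morphism is controlled by $\#\eus K\le 2$ and hence already known to be equidimensional by the $k\le 2$ case (an induction on $\#\eus K(\n)$, using Proposition~\ref{prop:embedd-F(n)} and the $B$-equivariance of $\tau\colon\tilde\n^*\to\n^*$ to relate the strata). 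In other words, the stratification of $\n^*$ by ``cascade depth'' has strata each of which is either the dense orbit or is fibred over lower-rank cascade data; pushing the fibre-dimension estimate through this finite stratification, combined with miracle flatness on the open part, yields equidimensionality of $\pi$ and hence freeness of $\gS(\n)$ over $\gS(\n)^U$. The delicate point will be ensuring the strata are genuinely of the claimed codimension in $\n^*$, which is where the strong orthogonality of the $\beta_i$ and the explicit formula for $\n^\bxi$ in Theorem~\ref{thm:cascade-point-stab} do the real work.
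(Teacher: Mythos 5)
There is a genuine gap. Your argument for $k=1$ is fine, but for $k=2$ the appeal to miracle flatness is circular: miracle flatness (Cohen--Macaulay source, regular target, all fibres of dimension $\dim X-\dim Y$) \emph{concludes} flatness from a known fibre-dimension bound, but it is precisely the bound $\dim\pi^{-1}(y)\le\dim\n-k$ for all $y$ that has to be established, and nothing in your write-up produces it. The fact that the target has dimension $\le 2$ is not, by itself, a substitute. For $k=3$ you acknowledge that the stratification/induction is the hard part, and the sketch as given (stratify by which cascade coordinates vanish, relate strata to smaller nilradicals via $\tau$) does not actually show that the strata are unions of fibres of $\pi$, nor does it control the dimension of fibres lying over the boundary of the dense orbit. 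So the proposal correctly reduces to the fibre-dimension bound but does not close it for $k\ge 2$.

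The paper's argument supplies the missing ingredient with two observations you did not use. First, the highest-root vector $e_\theta$ is a \emph{degree-one} generator of the polynomial algebra $\gS(\n)^U$, so the $U$-stable hyperplane $\n^*_0=\{\xi:\xi(e_\theta)=0\}$ has $\n^*_0\md U\simeq\mathbb{A}^{s}$ with $s=\#\eus K(\n)-1\le 2$; together with the conical grading, this reduces equidimensionality of $\pi$ to equidimensionality of $\pi_0:\n^*_0\to\n^*_0\md U$. Second, for quotients of an affine variety by a unipotent group whose invariant ring has Krull dimension $\le 2$, equidimensionality is a theorem of Brion~\cite{br83}. That result is exactly what replaces your attempted miracle-flatness shortcut; without it, the case $k\in\{2,3\}$ remains open in your proof.
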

\begin{proof}
By Theorem~\ref{thm:main5}, we have $\n^*\md U\simeq\mathbb A^{\#\eus K(\n)}$. Since 
$e_\theta\in\gS(\n)^U$ is an element of degree~$1$, the hyperplane $\n^*_0=\{\xi\in\n^*\mid \xi(e_\theta)=0\}$ is $U$-stable and $\n^*_0\md U\simeq \mathbb A^s$, where $s=\#\eus K(\n)-1\le 2$.
By a result of M.\,Brion~\cite{br83} on invariants of unipotent groups, 
$\pi_0:\n^*_0\to \n^*_0\md U$ is equidimensional, and this implies that $\pi$ is equidimensional, too.
\end{proof}

\begin{thm}       \label{thm:EQ-main1}
Let $\nap$ be an abelian nilradical, i.e., $[\theta:\ap]=1$. Suppose that a nilradical $\n$ is contained 
between $\nap$ and its optimisation $\widetilde{\nap}$. Then
\begin{enumerate}
\item \ $\nap$ is a \CP-ideal of\/ $\n$;
\item \ $\gS(\n)$ is a free module over $\gS(\n)^U=\gS(\n)^{\tilde N}$.
\end{enumerate}
\end{thm}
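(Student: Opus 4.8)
The plan is: establish~(1) directly, reduce~(2) to the case $\n=\nap$ by exploiting that $\nap$ is a \CP-ideal of $\n$, and then dispose of that abelian case.

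\emph{Part (1).} Intersecting the chain $\nap\subseteq\n\subseteq\widetilde{\nap}$ with $\eus K$ gives $\eus K(\nap)\subseteq\eus K(\n)\subseteq\eus K(\widetilde{\nap})=\eus K(\nap)$, so $\eus K(\n)=\eus K(\nap)$ and $\n,\nap$ have the common optimisation $\widetilde{\nap}$; hence $\bb(\n)=\bb(\nap)$ by Remark~\ref{rem:more}. Since $\nap$ is abelian, its coadjoint representation is trivial, so $\ind\nap=\dim\nap$ and $\bb(\nap)=\dim\nap$. Thus $\nap$ is an abelian subalgebra of $\n$ with $\dim\nap=\bb(\n)$, i.e.\ a \CP\ of $\n$; being a $\be$-stable ideal of $\ut$ contained in $\n$, it is automatically a $\be$-stable ideal of $\n$, hence a \CP-ideal.

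\emph{Reducing~(2) to the abelian case.} Let $r\colon\n^*\to\nap^*$ be the projection dual to $\nap\hookrightarrow\n$. Since $\nap$ is $\be$-stable, $r$ is $U$-equivariant, and $r^*$ realises $\gS(\nap)=\BC[\nap^*]$ as a $U$-stable polynomial subalgebra of $\gS(\n)=\BC[\n^*]$ over which $\gS(\n)$ is itself a polynomial ring (in the remaining coordinates), hence a free $\gS(\nap)$-module. Because $\nap$ is a \CP-ideal, Section~\ref{subs:role}(2) yields $\gZ(\n)=\gS(\n)^\n\subseteq\gS(\nap)$, and as $N\subseteq U$ we get $\gS(\n)^U\subseteq\gS(\n)^N=\gZ(\n)\subseteq\gS(\nap)$, so that $\gS(\n)^U=\gS(\n)^U\cap\gS(\nap)=\gS(\nap)^U$. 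By transitivity of freeness, it is then enough to show that $\gS(\nap)$ is a free $\gS(\nap)^U$-module, i.e.\ to prove~(2) for the abelian nilradical $\nap$. (Recall $\gS(\n)^U=\gS(\n)^{\tilde N}$ by Theorem~\ref{thm:main5}, which also gives the asserted freeness over $\gS(\n)^{\tilde N}$.)

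\emph{The abelian case and the main difficulty.} Here I would let $L$ be the Levi subgroup of $P_{\{\ap\}}={\sf Norm}_G(\nap)$ and $U(L)=U\cap L$, a maximal unipotent subgroup of the \emph{reductive} group $L$, so that $U=U(L)\ltimes N$. Since $\nap$ is abelian, $N$ acts trivially on $\gS(\nap)$, hence $\gS(\nap)^U=\gS(\nap)^{U(L)}$, a polynomial algebra by Theorem~\ref{thm:main5}. By a result of Kostant (\cite[4.5]{jos77}, cf.\ Remark~\ref{rem:jos}(2)), $\BC[\nap^*]^N=\gS(\nap)$ is a multiplicity-free $L$-module; the abelian (equivalently, cominuscule) nilradicals are exactly those for which this occurs. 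I would then invoke the equidimensionality of the quotient morphism $\nap^*\to\nap^*\md U(L)$ for such multiplicity-free modules (a result on invariants of maximal unipotent subgroups, in the spirit of Brion~\cite{br83}, cf.\ the use of \cite{br83} in Lemma~\ref{lm:K(n)=2}), whence $\gS(\nap)$ is a free $\gS(\nap)^{U(L)}=\gS(\nap)^U$-module by \cite[Prop.\,17.29]{gerry}. Pinning down exactly this equidimensionality is the one nonformal step; failing a ready-made reference, it can be checked case by case over the short list of cominuscule $\nap$, by verifying that the standard generators of $\gS(\nap)^U$ (generalised anti-principal minors, cf.\ Example~\ref{ex:non-polynom}(2)) form a regular sequence in $\gS(\nap)$, equivalently that their common zero locus in $\nap^*$ has codimension $\#\eus K(\nap)$.
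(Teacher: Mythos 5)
Your overall strategy coincides with the paper's: establish (1) via Remark~\ref{rem:more} and the observation $\bb(\nap)=\dim\nap$, then reduce (2) to freeness of $\gS(\nap)$ over $\gS(\nap)^U$ by showing $\gS(\n)^U=\gS(\nap)^U$ and using that $\gS(\n)$ is (visibly) free over $\gS(\nap)$. Your algebraic transitivity-of-freeness argument and the paper's commutative-diagram/equidimensionality argument are two renditions of the same reduction, and your derivation of $\gS(\n)^U=\gS(\nap)^U$ via the \CP-ideal inclusion $\gZ(\n)\subset\gS(\nap)$ (rather than directly from Theorem~\ref{thm:main5}, as the paper does) is equally valid.

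The one genuine gap — which you yourself flag — is the abelian base case. The paper disposes of it with a single citation: \cite[Theorem\,4.6]{aura} (an earlier paper of the same author on parabolics with abelian unipotent radical) asserts precisely that $\gS(\nap)$ is a free $\gS(\nap)^U$-module for every abelian nilradical $\nap$. Your alternatives do not quite close the gap as stated: multiplicity-freeness of $\gS(\nap)$ as an $L$-module does not by itself yield equidimensionality of $\nap^*\to\nap^*\md U(L)$, and Brion's theorem~\cite{br83}, as used in Lemma~\ref{lm:K(n)=2}, applies only when the quotient has dimension $\le 2$, whereas $\#\eus K(\nap)$ can be arbitrarily large (e.g. $\napn\subset\spn$ has $\#\eus K(\napn)=n$). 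Your fallback — verifying case by case that the generalised anti-principal minors/pfaffians generating $\gS(\nap)^U$ form a regular sequence — is indeed the content of \cite[Theorem\,4.6]{aura}, so the proof becomes complete once that reference is supplied; without it, the abelian case remains an acknowledged but unproved input.
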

\begin{proof}
(1) \ By Remark~\ref{rem:more}, we have $\dim\nap=\bb(\nap)=\bb(\n)$.

(2) \ Consider the commutative diagram
$\begin{array}{ccc}
\n^*& \overset{\vp}{\longrightarrow} & \nap^* \\  
\Big\downarrow\vcenter{\rlap{$\pi$}} & & \Big\downarrow\vcenter{\rlap{$\pi_{\{\ap\}}$}} \\
\n^*\md U & \stackrel{\vp\md U}{\longrightarrow} & \nap^*\md U 
\end{array}$ \quad 
corresponding to the $U$-equivariant inclusion $\nap\hookrightarrow \n$. Since $\n$ and $\nap$ 
have the same optimisation, Theorem~\ref{thm:main5} implies that $\gS(\n)^U=\gS(\nap)^U$, i.e., 
$\vp\md U$ is an isomorphism. By a general result on $U$-invariants for the abelian 
nilradicals~\cite[Theorem\,4.6]{aura}, $\gS(\nap)$ is a free module over $\gS(\nap)^U$. Hence the 
morphism $\pi_{\{\ap\}}$ is equidimensional and onto. Because the projection $\vp$ is also onto and 
equidimensional, $\pi$ must be equidimensional, too. Since $\n^*\md U$ is an affine space, the morphism 
$\pi$ is flat and $\BC[\n^*]$ is a free $\BC[\n^*]^U$-module. 
\end{proof}

\begin{cl}    \label{cor:nad-centrom}
For the optimal nilradical\/ $\tilde\n=\widetilde\nap$, the algebra $\gS(\tilde\n)$ is a free module over 
$\gZ(\tilde\n)$. 
\end{cl}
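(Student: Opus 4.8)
The plan is to simply specialise Theorem~\ref{thm:EQ-main1} to the case $\n=\tilde\n=\widetilde{\nap}$. This is legitimate, since the chain $\nap\subset\n\subset\widetilde{\nap}$ in the hypothesis of that theorem is allowed to degenerate to $\n=\widetilde{\nap}$ (both endpoints are permitted). Part~(2) of Theorem~\ref{thm:EQ-main1} then yields at once that $\gS(\tilde\n)$ is a free module over $\gS(\tilde\n)^U=\gS(\tilde\n)^{\tilde N}$.

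It remains to identify this algebra with the Poisson centre $\gZ(\tilde\n)$. Here I would use that $\tilde\n$ is optimal, so that its own optimisation equals $\tilde\n$ and the unipotent group playing the role of ``$\tilde N$'' for $\tilde\n$ is $N=\exp(\tilde\n)$ itself. Consequently, Theorem~\ref{thm:main5} applied with $\tilde\n$ in place of $\n$ gives
\[
  \gS(\tilde\n)^U=\gS(\tilde\n)^{\tilde N}=\gS(\tilde\n)^N=\gZ(\tilde\n),
\]
where the last equality holds because $N$ is connected. Combining this with the freeness statement of the previous paragraph shows that $\gS(\tilde\n)$ is a free $\gZ(\tilde\n)$-module, which is exactly the claim.

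There is essentially no obstacle: the substantive work has already been done in Theorems~\ref{thm:main5} and~\ref{thm:EQ-main1}. The only point that requires a moment's care is notational: when Theorem~\ref{thm:main5} is read for $\tilde\n$, the symbol $\tilde N$ there refers to the unipotent radical of the Frobenius envelope of $\tilde\n$, which coincides with $\exp(\tilde\n)$ precisely because $\tilde\n$ is optimal; hence the four algebras in the square of diagram~\eqref{eq:diagr} all collapse to the single polynomial algebra $\gZ(\tilde\n)$, and no ambiguity arises.
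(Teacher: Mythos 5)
Your proposal is correct and captures exactly the reasoning the paper leaves implicit: Theorem~\ref{thm:EQ-main1}(2) specialised to $\n=\tilde\n=\widetilde\nap$ gives freeness of $\gS(\tilde\n)$ over $\gS(\tilde\n)^U=\gS(\tilde\n)^{\tilde N}$, and since $\tilde\n$ is its own optimisation, the relevant unipotent group $\tilde N=\exp(\tilde\n)$ coincides with $N$, so that $\gS(\tilde\n)^{\tilde N}=\gS(\tilde\n)^N=\gZ(\tilde\n)$. One small imprecision in your last paragraph: $\tilde N=\exp(\tilde\n)$ holds by definition for any nilradical (it is always the unipotent radical of the Frobenius envelope $\ff_\n$); what optimality buys is $\tilde\n=\n$, hence $\tilde N=N$, which is what collapses the whole diagram~\eqref{eq:diagr} to the single algebra $\gZ(\tilde\n)$. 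Apart from that phrasing the argument is exactly right.
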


Theorem~\ref{thm:EQ-main1} implies interesting consequences for some types of simple Lie algebras.

\begin{prop}    \label{prop:EQ-sl}
For every nilradical $\n$ in $\slno$, there is an $\ap\in\Pi$ such that 
$\nap\subset\n\subset \widetilde{\nap}$. Therefore, $\gS(\n)$ is a free module over $\gS(\n)^U$ for any\/ $\n$. 
\end{prop}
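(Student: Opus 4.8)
The plan is to deduce the statement from Theorem~\ref{thm:EQ-main1}: for an arbitrary nilradical $\n=\n_{\eus T}$ in $\slno$ I want to exhibit a simple root $\ap$ with $[\theta:\ap]=1$ such that $\nap\subseteq\n\subseteq\widetilde{\nap}$; then part~(2) of that theorem yields at once that $\gS(\n)$ is a free module over $\gS(\n)^U$. Recall that in type $\GR{A}{n}$ one has $[\theta:\ap_i]=1$ for every simple root, so each $\nap$ with $\ap\in\Pi$ is an abelian nilradical and hence an admissible choice in Theorem~\ref{thm:EQ-main1}.

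The key combinatorial step is to compute $\eus K(\n_{\{\ap_j\}})$ for each $\ap_j\in\Pi$. By Example~\ref{ex:sln}, $\beta_i=\ap_i+\dots+\ap_{n+1-i}$, so $\supp(\beta_i)=\{\ap_i,\dots,\ap_{n+1-i}\}$, while $\Delta(\n_{\{\ap_j\}})=\{\gamma\in\Delta^+\mid\ap_j\in\supp(\gamma)\}$. Hence $\beta_i\in\Delta(\n_{\{\ap_j\}})$ exactly when $i\le j\le n+1-i$, i.e. when $i\le k_j:=\min(j,\,n+1-j)$; thus $\eus K(\n_{\{\ap_j\}})=\{\beta_1,\dots,\beta_{k_j}\}$. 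Consequently both $\eus K(\n_{\{\ap_j\}})$ and---by the description of optimisation in Section~\ref{subs:optim}---the optimisation $\widetilde{\n_{\{\ap_j\}}}$ depend only on the integer $k_j$.

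Now the choice of $\ap$. Write $\eus K(\n)=\{\beta_1,\dots,\beta_k\}$. By Lemma~\ref{lm:optim}, $\eus T\subseteq\tilde{\eus T}=\bigcup_{j\le k}\Phi(\beta_j)=\{\ap_1,\dots,\ap_k\}\cup\{\ap_{n+1-k},\dots,\ap_n\}$. Since $\beta_k\in\eus K(\n)\subseteq\Delta(\n)$ and $\Delta(\n)=\{\gamma\in\Delta^+\mid\supp(\gamma)\cap\eus T\ne\varnothing\}$, we have $\eus T\cap\supp(\beta_k)\ne\varnothing$; as $\eus T\subseteq\tilde{\eus T}$ and $\supp(\beta_k)=\{\ap_k,\dots,\ap_{n+1-k}\}$, this forces $\varnothing\ne\eus T\cap\supp(\beta_k)=\eus T\cap\{\ap_k,\ap_{n+1-k}\}$. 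Pick any $\ap\in\eus T\cap\{\ap_k,\ap_{n+1-k}\}$. Then $k_\ap=k$, so by the previous step $\eus K(\nap)=\eus K(\n)$, whence the optimisations coincide: $\widetilde{\nap}=\tilde\n$; in particular $\n\subseteq\tilde\n=\widetilde{\nap}$. On the other hand $\ap\in\eus T$ gives $\nap=\n_{\{\ap\}}\subseteq\n_{\eus T}=\n$. Thus $\nap\subseteq\n\subseteq\widetilde{\nap}$, and Theorem~\ref{thm:EQ-main1} applies.

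I do not expect a genuine obstacle here: everything rests on the elementary combinatorics of the supports of the cascade roots in type $\GR{A}{n}$. The only points deserving attention are the boundary value $k=[(n+1)/2]$, at which $\{\ap_k,\ap_{n+1-k}\}$ collapses to a single root ($\{\ap_k\}$, with $\beta_k=\ap_k$, when $n$ is odd; $\{\ap_k,\ap_{k+1}\}$ when $n$ is even), together with the routine facts that $\n_{\{\ap\}}\subseteq\n_{\eus T}\iff\ap\in\eus T$ and that two nilradicals with the same cascade subposet have the same optimisation---both immediate from the definitions in Section~\ref{subs:optim}.
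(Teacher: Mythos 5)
Your proof is correct and takes essentially the same route as the paper: both identify the desired simple root as an element of $\eus T\cap\{\ap_k,\ap_{n+1-k}\}$, where $\beta_k=\min\eus K(\n)$, and then invoke Theorem~\ref{thm:EQ-main1}. You simply spell out the verification of $\eus K(\nap)=\eus K(\n)$ and hence $\widetilde{\nap}=\tilde\n$ in more detail than the paper, which leaves this implicit.
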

\begin{proof}
We use the notation of Example~\ref{ex:sln}(1). Recall that $[\theta:\ap]=1$ for each $\ap\in\Pi$.
If $\n=\n_{\eus T}$ and $\min\eus K(\n)=\{\beta_k\}$ \ 
($k\le [(n+1)]/2$), then ${\eus T}\cap\{\ap_k,\ap_{n+1-k}\}\ne \varnothing$. Then
any $\ap$ in this intersection will do. (The construction of such a simple root $\ap$ is essentially 
contained in the proof of Theorem~6.2 in \cite{ag03}. For, then $\nap$ is a \CP-ideal of $\n$. But our 
approach that refers to $\eus K(\n)$ is shorter.)
\end{proof}

Of course, the first assertion of Proposition~\ref{prop:EQ-sl} appears to be true because {\bf all} simple 
roots of $\slno$ provide abelian nilradicals. Nevertheless, the second assertion can be proved for the 
nilradicals in $\spn$, although the proof becomes more involved.

\begin{prop}    \label{prop:EQ-sp}
For any nilradical $\n$ in $\spn$, $\gS(\n)$ is a free module over
$\gS(\n)^U$. 
\end{prop}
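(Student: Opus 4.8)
The plan is to reduce, via a commutative square as in the proof of Theorem~\ref{thm:EQ-main1}, to the freeness of $\gS(\ah)$ over $\gS(\ah)^U$ for the \CP-ideal $\ah$ of $\n$, and then to establish this for the ideals that actually arise in $\spn$. Write $\n=\n_{\eus T}$ with $\min\eus K(\n)=\{\beta_k\}$; then $\eus T\subset\{\ap_1,\dots,\ap_k\}$, $\ap_k\in\eus T$, and by Section~\ref{subs:frob-semi-sp} the unique $\be$-stable \CP-ideal of $\n$ is $\ah:=\n_{\{\ap_n\}}\cap\n$, which is also a $\be$-stable ideal of $\ut$ (an intersection of two such). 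Since $\ah$ is an abelian \CP-ideal, Section~\ref{subs:role}(2) gives $\gS(\n)^\n=\gS(\ah)^\n\subset\gS(\ah)$; hence $\gS(\n)^U\subset\gS(\n)^\n\subset\gS(\ah)$, and, being $U$-stable, $\gS(\n)^U\subset\gS(\ah)^U$. The reverse inclusion is clear from $\ah\subset\n$, so $\gS(\n)^U=\gS(\ah)^U$, which by Theorem~\ref{thm:main5} is polynomial of Krull dimension $\#\eus K(\n)$. Let $\psi\colon\n^*\to\ah^*$ be the $U$-equivariant restriction dual to $\ah\hookrightarrow\n$; by the equality of invariant rings, $\psi\md U\colon\n^*\md U\to\ah^*\md U$ is an isomorphism, while $\psi$ is a linear surjection, hence onto and equidimensional. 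Feeding this into the argument of Theorem~\ref{thm:EQ-main1} (the square with horizontal maps $\psi$, $\psi\md U$ and vertical maps $\pi_\n$, $\pi_\ah$), one gets: if $\pi_\ah$ is equidimensional and onto, so is $\pi_\n$; and since $\n^*\md U$ is an affine space, this makes $\gS(\n)$ a free $\gS(\n)^U$-module. It therefore suffices to prove that $\gS(\ah)$ is a free module over $\gS(\ah)^U$.

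Here $\ah$ is in general no longer a nilradical. In the matrix model of Example~\ref{ex:spn}, $\n_{\{\ap_n\}}=\napn$ is the space of antidiagonal-symmetric $n\times n$ matrices, with root set $\{\esi_i+\esi_j\mid 1\le i\le j\le n\}$; a direct check gives that $\esi_i+\esi_j$ (with $i\le j$) lies in $\Delta(\n_{\eus T})$ precisely when $i\le k$, so $\ah$ is the ``trapezoidal'' $\be$-submodule of $\napn$ defined by $i\le k$ (the top $k$ rows in the matrix model), and $\eus K(\ah)=\{\beta_1,\dots,\beta_k\}=\eus K(\n)$. By Remark~\ref{rem:arb-b-ideal-FA}, $\gS(\ah)^U$ is polynomial of transcendence degree $k$; its generators may be taken to be the first $k$ of the natural antidiagonal minors of $\napn$, restricted to $\ah$ (cf.\ the computations in Example~\ref{ex:non-polynom} and Section~\ref{subs:frob-semi-sl}). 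The plan is to prove that $\pi_\ah\colon\ah^*\to\ah^*\md U\cong\mathbb{A}^k$ is equidimensional. I would do this by adapting the proof of the abelian-nilradical freeness result \cite[Theorem~4.6]{aura}: an abelian $\be$-stable ideal of $\ut$ is a $B$-equivariant quotient of the dual of the abelian nilradical $\napn$, so it has only finitely many $B$-orbits and a transparent orbit poset, which one uses to control all fibres of $\pi_\ah$. Alternatively, one can argue directly from the explicit shape of $\ah$: show that the zero locus of the first $j$ minors has codimension $j$ in $\ah^*$ for every $j$, and that the generic fibre of $\pi_\ah$ is irreducible of dimension $\dim\ah-k$, so no fibre can be larger.

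The main obstacle is precisely this last step. For $\slno$ (Proposition~\ref{prop:EQ-sl}) one can always sandwich $\n$ between an honest \emph{abelian nilradical} $\nap$ and its optimisation and invoke Theorem~\ref{thm:EQ-main1} verbatim; but in $\spn$ the only abelian nilradical is $\napn$, which need not be contained in $\n$, so the \CP-ideal $\ah=\napn\cap\n$ is only a proper $\be$-submodule of an abelian nilradical and fails to be a nilradical itself. This takes $\ah$ outside the immediate scope of \cite[Theorem~4.6]{aura}, and obtaining the freeness of $\gS(\ah)$ over $\gS(\ah)^U$ requires genuine control of the degenerations of the antidiagonal minors on the trapezoid $\ah$ --- this is the ``more involved'' ingredient. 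Granting it, the reduction in the first paragraph yields the proposition, and, since $\gS(\n)^U=\gS(\n)^{\tilde N}$ (Lemma~\ref{lm:4.3}), also the freeness of $\gS(\tilde\n)$ over its Poisson centre $\gZ(\tilde\n)$ for every optimal nilradical $\tilde\n$ in $\spn$.
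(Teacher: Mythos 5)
Your reduction is exactly the one the paper uses: pass from $\n$ to the trapezoidal \CP-ideal $\ah=\n\cap\napn$ via the commutative square, observe that $\phi\md U$ is an isomorphism, and conclude that it suffices to prove $\bar\pi\colon\ah^*\to\ah^*\md U$ is onto and equidimensional. Up to this point you track the argument faithfully.

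But you then \emph{grant} the crucial step --- equidimensionality of $\bar\pi$ --- and the alternatives you sketch (adapting \cite[Theorem~4.6]{aura} to the trapezoid, or controlling the degenerations of the antidiagonal minors directly) are not how the paper proceeds, and it is not clear they would go through without serious extra work, since $\ah$ is indeed not a nilradical and \cite{aura} does not apply. The paper's actual device is to interpose a \emph{second} commutative square, with $\napn^*$ in the upper-left and $\ah^*$ in the upper-right, the horizontal map $\psi\colon\napn^*\to\ah^*$ being the (surjective, linear, hence automatically equidimensional) restriction. The key facts making this work are: (a) $\pi_{\ap_n}\colon\napn^*\to\napn^*\md U$ is already known to be onto and equidimensional --- that is exactly case (1) of the proof, i.e.\ Theorem~\ref{thm:EQ-main1} applied to $\napn\subset\ut=\widetilde{\napn}$; (b) the generators of $\gS(\napn)^U$ are the $n$ ``principal'' minors $f_1,\dots,f_n$, and since $f_i\in\gS(\ah)$ for $i\le k$ one has $\gS(\ah)^U=\BC[f_1,\dots,f_k]$ and $f_i\circ\psi=f_i$ for $i\le k$; hence (c) $\psi\md U$ is simply the coordinate projection $\mathbb{A}^n\to\mathbb{A}^k$, which is trivially onto and equidimensional. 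A dimension count through the square then forces $\bar\pi$ to be onto and equidimensional. So the ingredient you flagged as ``more involved'' and left unproved is in fact handled by a clean bootstrap from the already-established $\napn$ case, not by a fresh analysis of the trapezoid. Without that bootstrap your argument is incomplete.
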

\begin{proof}
Here $\napn$ is the only abelian nilradical and $\eus K(\napn)=\eus K$. Hence $\widetilde{\napn}=\ut$.

(1) \  If $\n=\n_{\eus T}$ and $\ap_n\in {\eus T}$, then $\napn\subset \n$ and Theorem~\ref{thm:EQ-main1} applies. 
In particular, this shows that here $\gS(\ut)$ is a free module over $\gS(\ut)^U=\gZ(\ut)$.

(2) \ Suppose that $\ap_n\not\in {\eus T}$. Then $\eus K(\n)=\{\beta_1,\dots,\beta_k\}$ for some $k<n$, 
$\ap_k\in {\eus T}$,  and
$\ah:=\n\cap\napn$ is a \CP-ideal of $\n$. 
If $\napn$ is identified with the space $\widehat{\mathsf{Sym}}_n$ of $n\times n$ matrices that are 
symmetric w.r.t. the antidiagonal (see Example~\ref{ex:spn}), then 
\begin{center}  
\begin{tikzpicture}[scale= .35] %, transform shape]
\draw (0,0)  rectangle (5,5);

\shade[top color=yellow,bottom color=gray, draw, line width=1pt] (2,0) -- (5,0) -- (5,5) -- (0,5) -- (0,2) -- (2,2)--cycle ;
\draw[dashed,brown]  (0,0) -- (5,5) ;
\draw[dashed,darkblue]  (2,2) -- (2,5) ;
\draw[dashed,darkblue]  (2,2) -- (5,2) ;

\draw (1.1,-.7)  node {\footnotesize {\color{darkblue}$n{-}k$}} ;
\draw (3.6,-.7)  node {\footnotesize {\color{darkblue}$k$}} ;
\draw (-1.1,2.4)  node {$\ah=$} ;
\draw (8, 2.4) node {$\subset \napn$};
\end{tikzpicture}
\end{center}

\noindent
Since $\ah$ is a $\be$-stable ideal of $\ut$, $\gS(\ah)^U$ is a polynomial algebra (see 
Remark~\ref{rem:arb-b-ideal-FA}). Let us prove that $\gS(\ah)$ is a free $\gS(\ah)^U$-module, i.e., that 
$\bar\pi:\ah^*\to \ah^*\md U$ is equidimensional. Consider the commutative diagram 
\begin{center}
$\begin{array}{ccc}
\napn^*& \overset{\psi}{\longrightarrow} & \ah^* \\  
\Big\downarrow\vcenter{\rlap{$\pi_{\ap_n}$}} 
& & \Big\downarrow\vcenter{\rlap{$\bar\pi$}} \\
\napn^*\md U & \stackrel{\psi\md U}{\longrightarrow} & \ah^*\md U 
\end{array}$ 
\end{center}
corresponding to the $U$-equivariant inclusion $\ah\hookrightarrow \napn$. Here 
$\gS(\n)^U=\gS(\ah)^U$ (cf. Section~\ref{subs:role}(2)), i.e., 
$\dim\ah^*\md U=\dim \n^*\md U=\#\eus K(\n)=k$. Upon the identification of $\napn^*$ with 
$\napn^-=\left\{\begin{pmatrix} 0 & 0 \\ C & 0\end{pmatrix}\mid C=\hat C \right\}$ and thereby with the
dual space $\widehat{\mathsf{Sym}}^*_n$, the algebra $\BC[\napn^*]^U$ 
is freely generated by the principal minors of $C$. Let $f_i$ be the principal minor of degree $i$, see the 
figure \quad 
\raisebox{-8.3ex}{\begin{tikzpicture}[scale= .35]
\draw (-1.3,3) node {$C=$};
\draw (6.4,3) node {$\left. {\parbox{1pt}{\vspace{60\unitlength}}}
\right\}$};
\draw (7.3,3) node {\footnotesize $n$};

\draw (0,0)  rectangle (1,1);
\draw (0,0)  rectangle (2,2);
\draw (0,0)  rectangle (3,3);
\draw (0,0)  rectangle (4,4);
\draw (0,0)  rectangle (6,6);
\draw (2,-.85) node {$\underbrace%          
{\mbox{\hspace{40\unitlength}}}_{\large i}$} ;
\draw[dashed,brown]  (0,0) -- (6,6) ;
\end{tikzpicture} }.
\quad Then
$
    \gS(\napn)^U=\BC[\napn^*]^U=\BC[f_1,\dots,f_n] 
$
and $\pi_{\ap_n}(M)=(f_1(M),\dots,f_n(M))$. On the other hand, $f_i\subset \gS(\ah)$ for $i\le k$ and 
hence $\gS(\ah)^U=\BC[f_1,\dots,f_k]$. Furthermore, if $\bar M=\psi(M)\in\ah^*$, then $f_i(\bar M)=f_i(M)$ for $i\le k$.
Then $\psi\md U$ takes $(f_1(M),\dots,f_n(M))$ to $(f_1(M),\dots,f_k(M))$ and thereby
$\psi\md U$ is surjective and equidimensional. Since we have already proved that $\pi_{\ap_n}$ is 
onto and equidimensional, this yields the same conclusion for $\bar\pi$.

Once it is proved that $\bar\pi$ is onto and equidimensional, an argument similar to that in 
Theorem~\ref{thm:EQ-main1} can be applied to the embedding $\ah\hookrightarrow\n$. Consider 
the diagram
\beq    \label{CP-diagr}
%\begin{center}
\begin{array}{ccc}
\n^*& \overset{\phi}{\longrightarrow} & \ah^* \\  
\Big\downarrow\vcenter{\rlap{$\pi$}} 
& & \Big\downarrow\vcenter{\rlap{$\bar\pi$}} \\
\n^*\md U & \stackrel{\phi\md U}{\longrightarrow} & \ah^*\md U 
\end{array}. 
%\end{center}
\eeq
Since $\ah$ is a \CP-ideal of $\n$, we have $\gS(\n)^N\subset \gS(\ah)$~\cite[Prop.\,5.5]{CP}. Hence 
$\gS(\n)^N=\gS(\ah)^N$ and thereby $\gS(\n)^U=\gS(\ah)^U$, i.e., $\phi\md U$ is an isomorphism. This 
implies that $\pi$ is equidimensional, and we are done.
\end{proof}

The proofs of Propositions~\ref{prop:EQ-sl} and \ref{prop:EQ-sp} exploit the fact that every nilradical in
$\slno$ or $\spn$ has a \CP\ (and hence a \CP-ideal). Using Theorem~\ref{thm:main5}, we get
a joint corollary to these propositions:
\beq     \label{eq:optimal-free}
\text{\it if\/
$\n=\tilde\n$ is an optimal nilradical in $\slno$ or $\spn$, then $\gS(\tilde\n)$ is a free 
$\gZ(\tilde\n)$-module.}
\eeq
For $\n\ne\tilde\n$, one cannot even guarantee that $\gZ(\n)$ is a polynomial algebra, see 
Example~\ref{ex:non-polynom}. It might be interesting to characterise non-optimal nilradicals having a 
polynomial algebra $\gZ(\n)$. 

By~\cite{CP}, if $\n\subset\widetilde\nap$ with $[\theta:\ap]=1$, then $\n$ has a \CP.
However, to point out a \CP-ideal of $\n$, one needs some precautions:

(1) \ if $\Phi(\Phi^{-1}(\ap))=\{\ap\}$, then $\n\cap\nap$ is a \CP-ideal of $\n$;

(2) \ if $\Phi(\Phi^{-1}(\ap))=\{\ap,\ap'\}$ and $\g\ne\slno$, then at least one of $\n\cap\nap$ and
$\n\cap\n_{\ap'}$ is a \CP-ideal of $\n$;

(3) \ If $\g=\slno$, then one should choose the {\bf minimal} $\widetilde\nap$ containing $\n$. More 
precisely, since $\n\subset\widetilde\nap$, we have $\eus K(\n)\subset
\eus K(\nap)$. Here one has to take $\ap$ such that $\eus K(\n)=\eus K(\nap)$. Then item (2) applies. 

In any case, $\n\cap\nap$ is a \CP-ideal of $\n$ for a ``right'' choice of $\ap\in\Pi$, and the hypothesis 
that $\nap\subset\n$ is not required for the presence of \CP. That is, Theorem~\ref{thm:EQ-main1} does 
not apply to all nilradicals with \CP. Nevertheless, using a case-by-case argument, we can prove the following.

\begin{thm}           \label{thm:Eq-CP-all}
If\/ $\n=\n_\eus T$ has a \CP, then $\gS(\n)$ is a free $\gS(\n)^U$-module.
\end{thm}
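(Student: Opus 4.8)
The plan is to reduce the general case to the situations already handled by Theorem~\ref{thm:EQ-main1} and the classical-type propositions, using the classification of nilradicals with \CP\ recalled in Section~\ref{subs:CP}. Recall that $\n$ has a \CP\ precisely when either $\n=\h_1$ is the Heisenberg nilradical, or $\n\subset\widetilde{\nap}$ for some abelian nilradical $\nap$ (with $[\theta:\ap]=1$). The Heisenberg case is trivial: $\#\eus K(\h_1)=1$, so Lemma~\ref{lm:K(n)=2} applies directly. So the substantive case is $\n\subset\widetilde{\nap}$. If moreover $\nap\subset\n$, then Theorem~\ref{thm:EQ-main1} gives the conclusion at once. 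The remaining case is $\n\subset\widetilde{\nap}$ but $\nap\not\subset\n$; here one cannot invoke Theorem~\ref{thm:EQ-main1} verbatim, and this is where the real work lies.

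For the remaining case, the strategy is to imitate the proof of Proposition~\ref{prop:EQ-sp}. Pick (as in the discussion following Corollary~\ref{cor:nad-centrom}) a simple root $\ap$ with $[\theta:\ap]=1$ such that $\ah:=\n\cap\nap$ is a $\be$-stable \CP-ideal of $\n$; by the classification this can always be arranged, choosing the minimal $\widetilde{\nap}\supseteq\n$ so that $\eus K(\n)=\eus K(\nap)$. The key intermediate claim is that $\gS(\ah)$ is a free $\gS(\ah)^U$-module, i.e.\ that $\bar\pi\colon\ah^*\to\ah^*\md U$ is equidimensional. Since $\ah$ is a $\be$-stable ideal of $\ut$, $\gS(\ah)^U$ is polynomial by Remark~\ref{rem:arb-b-ideal-FA}. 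To get equidimensionality of $\bar\pi$, factor through $\nap$: the $U$-equivariant inclusion $\ah\hookrightarrow\nap$ gives a commutative square as in Proposition~\ref{prop:EQ-sp}, with vertical maps $\pi_{\{\ap\}}$ and $\bar\pi$; the bottom map $\psi\md U$ is surjective and equidimensional because the generators of $\gS(\ah)^U$ are a subset of (pull-backs of) the generators of $\gS(\nap)^U$ — this uses the known freeness of $\gS(\nap)$ over $\gS(\nap)^U$ for abelian nilradicals, \cite[Theorem\,4.6]{aura}. Combining with equidimensionality of $\pi_{\{\ap\}}$ yields that $\bar\pi$ is onto and equidimensional. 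Finally, the $U$-equivariant inclusion $\ah\hookrightarrow\n$ gives the diagram~\eqref{CP-diagr}: since $\ah$ is a \CP-ideal of $\n$, we have $\gS(\n)^N=\gS(\ah)^N$ by \cite[Prop.\,5.5]{CP} (using $\bb(\ah)=\bb(\n)$), hence $\gS(\n)^U=\gS(\ah)^U$ by Theorem~\ref{thm:main5}, so $\phi\md U$ is an isomorphism; equidimensionality of $\bar\pi$ then forces $\pi\colon\n^*\to\n^*\md U$ to be equidimensional, and since $\n^*\md U$ is an affine space $\pi$ is flat and $\gS(\n)$ is $\gS(\n)^U$-free.

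The step I expect to be the main obstacle is verifying, in full generality (all simple $\g$, all $\ap$ with $[\theta:\ap]=1$, all admissible \CP-ideals $\ah$), that the generators of $\gS(\nap)^U$ restrict in a sufficiently controlled way so that $\psi\md U$ is surjective and equidimensional — equivalently, that the abelian-nilradical invariant theory of \cite{aura} is compatible with passing to the $\be$-stable subideal $\ah$. For $\spn$ this was done by an explicit principal-minors computation; for the other types ($\GR{A}{n}$ is covered by Proposition~\ref{prop:EQ-sl}, but $\GR{B}{n},\GR{D}{n},\GR{E}{6},\GR{E}{7}$ each admit abelian nilradicals) one needs the analogous explicit description of $\gS(\nap)^U$ and of its restriction to $\ah$. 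This is the "case-by-case argument" the statement alludes to: go through the abelian nilradicals $\nap$ in each type, recall the known generating invariants (quadrics/Pfaffian-type or spinor-type invariants, etc.), and check in each case that the subset of them lying in $\gS(\ah)$ freely generates $\gS(\ah)^U$ and that the fibres of $\psi$ have the expected dimension. Once that compatibility is established uniformly, the diagram-chase above closes the proof with no further difficulty.

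\begin{proof}
If $\n$ is the Heisenberg nilradical $\h_1$, then $\#\eus K(\n)=1$ and the claim follows from Lemma~\ref{lm:K(n)=2}. Otherwise, by the classification recalled in Section~\ref{subs:CP}, there is an abelian nilradical $\nap$ (so $[\theta:\ap]=1$) with $\n\subset\widetilde{\nap}$; choosing $\ap$ so that $\eus K(\n)=\eus K(\nap)$, the subalgebra $\ah:=\n\cap\nap$ is a $\be$-stable \CP-ideal of $\n$ (see the discussion after Corollary~\ref{cor:nad-centrom} and cases (1)--(3) above). If $\nap\subset\n$, apply Theorem~\ref{thm:EQ-main1}. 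In the remaining case, a case-by-case inspection of the abelian nilradicals in all simple types (the cases $\g=\slno$ and $\g=\spn$ being Propositions~\ref{prop:EQ-sl} and \ref{prop:EQ-sp}) shows, via the explicit description of $\gS(\nap)^U$ from \cite[Theorem\,4.6]{aura} and the diagram
\[
\begin{array}{ccc}
\nap^* & \overset{\psi}{\longrightarrow} & \ah^* \\
\Big\downarrow\vcenter{\rlap{$\pi_{\{\ap\}}$}} & & \Big\downarrow\vcenter{\rlap{$\bar\pi$}} \\
\nap^*\md U & \stackrel{\psi\md U}{\longrightarrow} & \ah^*\md U
\end{array}
\]
attached to $\ah\hookrightarrow\nap$, that $\psi\md U$ is surjective and equidimensional; since $\pi_{\{\ap\}}$ is onto and equidimensional, so is $\bar\pi:\ah^*\to\ah^*\md U$. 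Finally, since $\ah$ is a \CP-ideal of $\n$, \cite[Prop.\,5.5]{CP} gives $\gS(\n)^N=\gS(\ah)^N$, whence $\gS(\n)^U=\gS(\ah)^U$ by Theorem~\ref{thm:main5}; thus in the diagram~\eqref{CP-diagr} for $\ah\hookrightarrow\n$ the map $\phi\md U$ is an isomorphism, and equidimensionality of $\bar\pi$ forces $\pi:\n^*\to\n^*\md U$ to be equidimensional. As $\n^*\md U$ is an affine space, $\pi$ is flat and $\gS(\n)$ is a free $\gS(\n)^U$-module.
\end{proof}
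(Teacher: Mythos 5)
Your reduction matches the paper's: use the classification of nilradicals with \CP, apply Theorem~\ref{thm:EQ-main1} when $\nap\subset\n$, and otherwise factor the quotient map through the \CP-ideal $\ah=\n\cap\nap$ via the diagram~\eqref{CP-diagr}, reducing everything to the equidimensionality of $\bar\pi\colon\ah^*\to\ah^*\md U$. That part is exactly right.

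The gap is in the second half. Your proof block states that ``a case-by-case inspection of the abelian nilradicals in all simple types shows \dots\ that $\psi\md U$ is surjective and equidimensional,'' but this is precisely the content of the theorem for the remaining types, and you do not carry it out. Worse, your commentary overestimates the work involved: you expect to need an explicit description of $\gS(\nap)^U$ (spinor/Pfaffian-type invariants) for each of $\GR{B}{n}$, $\GR{D}{n}$, $\GR{E}{6}$, $\GR{E}{7}$. The paper avoids almost all of this by observing that in most cases $\#\eus K(\n)$ is tiny, so Lemma~\ref{lm:K(n)=2} applies directly to $\n$ with no diagram needed at all: for $\sono$ and for $\sone$ with $\ap_1$ one has $\#\eus K(\n_{\{\ap_1\}})=2$; for $\GR{E}{6}$ one has $\#\eus K(\n_{\{\ap_i\}})=2$ for $i\in\{1,5\}$; and for $\GR{E}{7}$ one has $\#\eus K(\n_{\{\ap_1\}})=3$. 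The only case genuinely requiring an invariant-theoretic argument analogous to the principal-minor computation for $\spn$ is $\sone$ with $\ap=\ap_n$ or $\ap_{n-1}$, where $\napn$ consists of matrices skew-symmetric w.r.t.\ the antidiagonal and the basic $U$-invariants are the even-order Pfaffians. So to close your proof you should (a) observe that the Heisenberg case and the orthogonal/exceptional cases with $\#\eus K\le 3$ fall to Lemma~\ref{lm:K(n)=2} directly, and (b) supply the explicit Pfaffian computation only for $(\GR{D}{n},\ap_n)$; this is both necessary to make the argument complete and considerably less work than the uniform diagram-through-$\nap$ program you sketched (which, for $\GR{E}{6}$ and $\GR{E}{7}$, would require a nontrivial description of $\gS(\nap)^U$ that you have not provided).
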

\begin{proof}
By the preceding discussion, we may assume that $\n\subset\widetilde\nap$ and $\ah:=\n\cap\nap$ is 
a \CP-ideal of $\n$. As in the proof of Proposition~\ref{prop:EQ-sp}, one has the commutative 
diagram~\eqref{CP-diagr}, where $\phi\md U$ is an isomorphism. Therefore, it suffices to prove that 
$\bar{\pi}:\ah^*\to \ah^*\md U$ is equidimensional.  
Consider all simple Lie algebras having abelian nilradicals.

(1) The algebras $\slno$ and $\spn$ have been considered above.

(2) For $\sono$, the only abelian nilradical corresponds to $\ap_1=\esi_1-\esi_2$ and 
$\eus K(\n_{\ap_1})=\{\beta_1,\beta_2\}$. Therefore, $\#\eus K(\n)\le 2$ and Lemma~\ref{lm:K(n)=2} 
applies.

(3) For $\sone$, the abelian nilradicals correspond to $\ap_1,\ap_{n-1}$, and $\ap_n=\esi_{n-1}+\esi_n$.

\textbullet\quad For $\ap_1=\esi_1-\esi_2$, the situation is the same as for $\g=\sono$.

\textbullet\quad Since $\n_{\{\ap_{n-1}\}}\simeq \n_{\{\ap_{n}\}}$, we consider only the last possibility.
The case of $(\sone, \ap_n)$ is similar to $(\spn, \ap_n)$, which is considered in 
Proposition~\ref{prop:EQ-sp}. The distinction is that now $\napn$ consists of $n\times n$ matrices that 
are {\it skew-symmetric\/} {w.r.t.} the antidiagonal, and the basic $U$-invariants are pfaffians of the 
principal minors of even order. The embedding $\ah\subset\napn$ can be described by the figure in 
the proof of Proposition~\ref{prop:EQ-sp}, only now $k$ must be even and the matrices should be 
skew-symmetric {w.r.t.} the antidiagonal. 
\\ \indent
More precisely, if $n=2p+1$, then $\ap_n\not\in \eus K$ and $\Phi^{-1}(\ap_n)=\beta_{2p-1}$.
If $n=2p$, then $\ap_{n}=\beta_{2p-1}$. In both cases, 
$\eus K(\n_{\{\ap_n\}})=\{\beta_1,\beta_3,\dots,\beta_{2p-1}\}$. If $\n\subset\widetilde\napn$ and 
$\nap\not\subset\n$, then $\eus K(\n)=\{\beta_1,\beta_3,\dots,\beta_{2s-1}\}$, where $s<p=[n/2]$. Then
$\trdeg\gS(\ah)^U=s$ and  $\gS(\ah)^U$ is generated by the pfaffians of order $2,4,\dots,2s$. The fact that these pfaffians
form a regular sequence in $\gS(\ah)=\BC[\ah^*]$ can be proved as in Proposition~~\ref{prop:EQ-sp}
for $\spn$.

(4) For $\GR{E}{6}$, the abelian nilradicals correspond to $\ap_1$ and $\ap_5$. In both cases,
we have $\#\eus K(\n_{\{\ap_i\}})=2$ and hence Lemma~\ref{lm:K(n)=2} applies to any $\n\subset
\widetilde{\n_{\{\ap_i\}}}$, $i=1,5$.

(5) For $\GR{E}{7}$, the only abelian nilradical corresponds to $\ap_1$. Here 
$\eus K(\n_{\{\ap_1\}})=\{\beta_1,\beta_2,\beta_3\}$, see Appendix~\ref{sect:tables}. 
Therefore, Lemma~\ref{lm:K(n)=2} applies here.
\end{proof}

Our computations suggest that Theorem~\ref{thm:Eq-CP-all} holds in the general case. 

\begin{conj}
For any nilradical $\n$ in a simple Lie algebra $\g$, \ $\gS(\n)$ is a free\/ $\gS(\n)^U$-module. In 
particular, for any optimal nilradical $\tilde\n$, $\gS(\tilde\n)$ is a free\/ $\gZ(\tilde\n)$-module.
\end{conj}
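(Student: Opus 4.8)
The plan is to reduce the conjecture to the case of an optimal nilradical and then to recast that case as a statement about a null-cone. \emph{Step~1: reduction to optimal nilradicals.} Let $\tilde\n$ be the optimisation of $\n$ and let $\varrho\colon\tilde\n^*\to\n^*$ be the $B$-equivariant surjection dual to the inclusion $\n\hookrightarrow\tilde\n$ of $\be$-stable ideals of $\ut$; after a choice of vector-space splitting, $\varrho$ is the projection $\n^*\times\Ker\varrho\to\n^*$, so $\varrho^{-1}(Z)\cong Z\times\Ker\varrho$ for every subvariety $Z\subseteq\n^*$. By Theorem~\ref{thm:main5} the comorphism $\varrho^*$ --- which is just the inclusion $\gS(\n)\hookrightarrow\gS(\tilde\n)$ --- restricts to an isomorphism $\gS(\n)^U\isom\gS(\tilde\n)^U$, so $\varrho$ descends to an isomorphism of $U$-quotients fitting into the commutative square
\[
\begin{array}{ccc}
\tilde\n^* & \overset{\varrho}{\longrightarrow} & \n^*\\
\downarrow & & \downarrow\\
\tilde\n^*\md U & \overset{\sim}{\longrightarrow} & \n^*\md U .
\end{array}
\]
Writing $\pi$ and $\tilde\pi$ for the two quotient morphisms, for $y\in\n^*\md U$ with preimage $\tilde y$ one has $\tilde\pi^{-1}(\tilde y)=\varrho^{-1}\!\bigl(\pi^{-1}(y)\bigr)\cong\pi^{-1}(y)\times\Ker\varrho$; hence $\pi^{-1}(y)$ is equidimensional of dimension $\dim\n-\#\eus K(\n)$ as soon as $\tilde\pi^{-1}(\tilde y)$ is equidimensional of dimension $\dim\tilde\n-\#\eus K(\tilde\n)$. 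Thus it is enough to prove the conjecture for optimal $\n$, and, since $\gZ(\tilde\n)=\gS(\tilde\n)^U$ for optimal $\tilde\n$ by Theorem~\ref{thm:main5}, this also yields the ``in particular'' clause.

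\emph{Step~2: reformulation for optimal $\n$.} Assume $\n=\tilde\n$ is optimal and write $\gS(\n)^U=\BC[F_1,\dots,F_k]$ with $k=\#\eus K(\n)$ and the $F_i$ homogeneous; as $\theta$ is the highest root, the linear coordinate $e_\theta$ on $\n^*$ dual to $\g_{-\theta}$ is $U$-invariant, so one may take $F_1=e_\theta$. Since $U$ acts \emph{linearly} on $\n^*$, the quotient morphism $\pi\colon\n^*\to\n^*\md U=\spe\BC[F_1,\dots,F_k]$ is equivariant for the scaling $\BC^\times$-action on $\n^*$ and for the $\BC^\times$-action on $\n^*\md U$ with the positive weights $\deg F_i$; the origin is therefore the unique $\BC^\times$-fixed point of $\n^*\md U$, it lies in the closure of every $\BC^\times$-orbit, and upper semicontinuity of fibre dimension gives $\dim\pi^{-1}(y)\le\dim\N$ for all $y$, where $\N:=\pi^{-1}(0)$. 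By \cite[Prop.\,17.29]{gerry}, $\gS(\n)$ is a free $\gS(\n)^U$-module if and only if $\pi$ is equidimensional; in view of the above, this holds if and only if $\dim\N=\dim\n-k$, i.e.\ if and only if the homogeneous generators $F_1,\dots,F_k$ form a regular sequence in the polynomial ring $\gS(\n)$.

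\emph{Step~3: the null-cone estimate, and the main obstacle.} It remains to prove $\dim\N=\dim\n-k$ for every optimal $\n$. The natural attempt is induction on $k=\#\eus K(\n)$ (the cases $k\le 3$ being Lemma~\ref{lm:K(n)=2}), peeling off the top Heisenberg summand: $\h_1=\n_\theta$ is a $\be$-stable ideal of $\n$, the quotient $\n/\h_1$ is the optimal nilradical of $\g^{\perp\theta}$ with cascade $\eus K(\n)\setminus\{\beta_1\}$, and $F_1=e_\theta$ cuts out the $U$-stable hyperplane $\n^*_0=\{\xi\mid\xi(e_\theta)=0\}$, so one would like to get $\dim\bigl(\N\cap\n^*_0\bigr)=\dim\n^*_0-(k-1)$ from the inductive hypothesis for $\n/\h_1$. \emph{This is the hard step:} the higher fundamental semi-invariants $F_2,\dots,F_k$ genuinely involve the coordinates of the sub-Heisenberg ideals and do not descend to $\n/\h_1$, so controlling their joint vanishing on the degenerate hyperplane $\n^*_0$ requires understanding the $U$-invariants on $\n^*_0$ (governed by the symplectic $U$-module $\h_1/\z(\h_1)$) --- precisely the point at which Lemma~\ref{lm:K(n)=2} had to invoke Brion's theorem \cite{br83} on quotients of unipotent groups, available there only because $\#\eus K(\n)-1\le 2$. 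An alternative route is to construct, for each optimal $\n$, an \emph{adapted pair} $(h,\eta)$ --- with $h\in\n$ such that $\ad h$ is semisimple, $\eta\in\n^*_{\sf reg}$ such that $\n=\n{\cdot}\eta\oplus\n^\eta$, and $h$ acting on each $F_i$ by a scalar --- and to deduce the regular-sequence property by the usual slice argument. The case-by-case proof of Theorem~\ref{thm:Eq-CP-all} carries Step~3 through whenever $\n$ has a \CP; the optimal nilradicals \emph{without} a \CP --- for instance $\ut$ in types $\GR{G}{2}$, $\GR{F}{4}$, $\GR{E}{8}$ --- are the part that a general argument must still handle.
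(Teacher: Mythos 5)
This statement is a \emph{Conjecture}, not a theorem: the paper does not prove it, and explicitly says that ``some fresh ideas are necessary'' to handle the remaining cases (in particular $\n=\ut$ for $\GR{G}{2}$, $\GR{F}{4}$, $\GR{E}{8}$). Your proposal also does not prove it; rather, it is an honest reduction followed by a frank description of exactly the obstruction the paper itself flags.

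On the parts you do carry out: Step~1 is correct. The reduction from arbitrary $\n$ to its optimisation $\tilde\n$ via the $B$-equivariant surjection $\varrho\colon\tilde\n^*\to\n^*$ and the identity $\gS(\n)^U=\gS(\tilde\n)^U$ of Theorem~\ref{thm:main5} is exactly the ``commutative square plus linear projection'' argument that the paper uses in the proofs of Theorem~\ref{thm:EQ-main1} and Proposition~\ref{prop:EQ-sp}, only run in the opposite direction (pulling equidimensionality down along the projection rather than pushing it forward along an inclusion of a \CP-ideal). This is a genuine, correct observation: it sharpens the paper's concluding remark (``verify the conjecture for $\n=\ut$'') to the precise statement that optimal nilradicals suffice, and it correctly explains why the ``in particular'' clause follows from the main clause via Theorem~\ref{thm:main5}. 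Step~2 (cone argument, reduction to $\dim\N=\dim\n-k$, regular-sequence reformulation) is standard and correct, and you correctly note that $F_1=e_\theta$ can be taken as one of the generators, which is what makes Lemma~\ref{lm:K(n)=2} work.

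The genuine gap is Step~3, which you name yourself: the induction that peels off $\h_1$ fails because the higher $F_i$ do not factor through $\n/\h_1$, Brion's surface result \cite{br83} only controls $k-1\le 2$, the \CP\ route covers only the nilradicals with \CP\ (Theorem~\ref{thm:Eq-CP-all}), and the adapted-pair route is not constructed. So the proposal is not a proof of the conjecture but an accurate survey of why the known techniques stop where they do; it adds the clean reduction to the optimal case, but otherwise coincides with the paper's own assessment of what remains open.
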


So far, this conjecture is proved for {\sf (i)} the square integrable nilradicals 
(Section~\ref{sect:square-int}), {\sf (ii)} the nilradicals with \CP, {\sf (iii)} the series $\GR{A}{n}$ and 
$\GR{C}{n}$, and {\sf (iv)} if $\rk\g\le 3$. It is not hard to check it for $\GR{D}{4}$. Perhaps, the first 
step towards a general proof is to verify the conjecture for $\n=\ut$. Since $\ut$ has a \CP\ only for 
$\GR{A}{n}$ and $\GR{C}{n}$, some fresh ideas are necessary here.

%%%%%%%%%%%%  Section 9  %%%%%%%%%
\appendix
\section{The elements of $\eus K$}
\label{sect:tables}

\noindent
We list below the cascade roots (elements of $\eus K$) for all simple Lie algebras. 
The numbering of $\Pi=\{\ap_1,\dots,\ap_{\rk \g}\}$ follows~\cite[Table\,1]{VO} 
and, for roots of the classical Lie algebras, we use the standard $\esi$-notation. The numbering of 
cascade roots yields a linear extension of the poset $(\eus K,\curle)$, i.e., it is not unique unless
$\eus K$ is a chain. In all cases, $\beta_1=\theta$ and 
\[
   \Phi(\beta_i)=\{\ap\in\Pi\mid \beta_i-\ap\in \Delta^+\cup\{0\}\} . 
\]
In particular, if $\beta\in \eus K\cap\Pi$, then $\Phi(\beta)=\{\beta\}$ and $\beta$ is a minimal element of 
$\eus K$. Conversely, if $\beta$ is a minimal element of $\eus K$ and $\Phi(\beta)=\{\ap\}$, a sole simple 
root, then $\ap=\beta$. 

\noindent
{\it\bfseries The cascade elements for the classical Lie algebras}:
\begin{description}
\item[$\GR{A}{n}, n\ge 2$]  \ $\beta_i=\esi_i-\esi_{n+2-i}=\ap_i+\dots +\ap_{n+1-i}$ \ ($i=1,2,\dots,\left[\frac{n+1}{2}\right]$);
\item[$\GR{C}{n}, n\ge 1$]  \ $\beta_i=2\esi_i=2(\ap_i+\dots+\ap_{n-1})+\ap_n$ \ ($i=1,2,\dots,n-1$) and 
$\beta_n=2\esi_n=\ap_n$;
\item[$\GR{B}{2n}$, $\GR{D}{2n}$, $\GR{D}{2n+1}$ ($n\ge 2$)]  \ 
$\beta_{2i-1}=\esi_{2i-1}+\esi_{2i}$, $\beta_{2i}=\esi_{2i-1}{-}\esi_{2i}=\ap_{2i-1}$ \ ($i=1,2,\dots,n$);
\item[$\GR{B}{2n+1}, n\ge 1$] \ here $\beta_1,\dots,\beta_{2n}$ are as above and $\beta_{2n+1}=\esi_{2n+1}=\ap_{2n+1}$;
\end{description}

\noindent
For all orthogonal series, we have $\beta_{2i}=\ap_{2i-1}$, $i=1,\dots,n$, while formulae for 
$\beta_{2i-1}$ via $\Pi$ slightly differ for different series. E.g. for $\GR{D}{2n}$ one has
$\beta_{2i-1}=\ap_{2i-1}+2(\ap_{2i}+\dots+\ap_{2n-2})+\ap_{2n-1}+\ap_{2n}$ ($i=1,2,\dots,n-1$)
and $\beta_{2n-1}=\ap_{2n}$.

\noindent
{\it\bfseries The cascade elements for the exceptional Lie algebras}:
\begin{description}
\item[$\GR{G}{2}$]  \ $\beta_1=(32)=3\ap_1+2\ap_2, \ \beta_2=(10)=\ap_1$;
\item[$\GR{F}{4}$]   \ $\beta_1=(2432)=2\ap_1{+}4\ap_2{+}3\ap_3{+}2\ap_4,\ \beta_2=(2210),\ \beta_3=(0210),\ \beta_4=(0010)=\ap_3$;
     \item[$\GR{E}{6}$] \  
  $\beta_1=$\raisebox{-2.1ex}{\begin{tikzpicture}[scale= .85, transform shape]
\node (a) at (0,0) {1}; \node (b) at (.2,0) {2};
\node (c) at (.4,0) {3}; \node (d) at (.6,0) {2};
\node (e) at (.8,0) {1}; \node (f) at (.4,-.4) {2};
\end{tikzpicture}}\!\!, 
  $\beta_2=$\raisebox{-2.1ex}{\begin{tikzpicture}[scale= .85, transform shape]
\node (a) at (0,0) {1}; \node (b) at (.2,0) {1};
\node (c) at (.4,0) {1}; \node (d) at (.6,0) {1};
\node (e) at (.8,0) {1}; \node (f) at (.4,-.4) {0};
\end{tikzpicture}}, 
  $\beta_3=$\raisebox{-2.1ex}{\begin{tikzpicture}[scale= .85, transform shape]
\node (a) at (0,0) {0}; \node (b) at (.2,0) {1};
\node (c) at (.4,0) {1}; \node (d) at (.6,0) {1};
\node (e) at (.8,0) {0}; \node (f) at (.4,-.4) {0};
\end{tikzpicture}},
  $\beta_4$\,=\raisebox{-2.1ex}{\begin{tikzpicture}[scale= .85, transform shape]
\node (a) at (0,0) {0}; \node (b) at (.2,0) {0};
\node (c) at (.4,0) {1}; \node (d) at (.6,0) {0};
\node (e) at (.8,0) {0}; \node (f) at (.4,-.4) {0};
\end{tikzpicture}}=\,$\ap_3$;
     \item[$\GR{E}{7}$] \ 
  $\beta_1$=\raisebox{-2.1ex}{\begin{tikzpicture}[scale= .85, transform shape]
\node (a) at (0,0) {1}; \node (b) at (.2,0) {2}; \node (c) at (.4,0) {3};
\node (d) at (.6,0) {4}; \node (e) at (.8,0) {3}; \node (f) at (1,0) {2};
\node (g) at (.6,-.4) {2};
\end{tikzpicture}},
  $\beta_2$=\raisebox{-2.1ex}{\begin{tikzpicture}[scale= .85, transform shape]
\node (a) at (0,0) {1}; \node (b) at (.2,0) {2}; \node (c) at (.4,0) {2};
\node (d) at (.6,0) {2}; \node (e) at (.8,0) {1}; \node (f) at (1,0) {0};
\node  at (.6,-.4) {1};
\end{tikzpicture}},
  $\beta_3$=\raisebox{-2.1ex}{\begin{tikzpicture}[scale= .85, transform shape]
\node (a) at (0,0) {1}; \node (b) at (.2,0) {0}; \node (c) at (.4,0) {0};
\node (d) at (.6,0) {0}; \node (e) at (.8,0) {0}; \node (f) at (1,0) {0};
\node (g) at (.6,-.4) {0};
\end{tikzpicture}}=\,$\ap_1$,
  $\beta_4$=\raisebox{-2.1ex}{\begin{tikzpicture}[scale= .85, transform shape]
\node (a) at (0,0) {0}; \node (b) at (.2,0) {0}; \node (c) at (.4,0) {1};
\node (d) at (.6,0) {2}; \node (e) at (.8,0) {1}; \node (f) at (1,0) {0};
\node (g) at (.6,-.4) {1};
\end{tikzpicture}},
  $\beta_5$=\raisebox{-2.1ex}{\begin{tikzpicture}[scale= .85, transform shape]
\node (a) at (0,0) {0}; \node (b) at (.2,0) {0}; \node (c) at (.4,0) {1};
\node (d) at (.6,0) {0}; \node (e) at (.8,0) {0}; \node (f) at (1,0) {0};
\node (g) at (.6,-.4) {0};
\end{tikzpicture}}=\,$\ap_3$,   \\
  $\beta_6$=\raisebox{-2.1ex}{\begin{tikzpicture}[scale= .85, transform shape]
\node (a) at (0,0) {0}; \node (b) at (.2,0) {0}; \node (c) at (.4,0) {0};
\node (d) at (.6,0) {0}; \node (e) at (.8,0) {1}; \node (f) at (1,0) {0};
\node (g) at (.6,-.4) {0};
\end{tikzpicture}}=\,$\ap_5$,
  $\beta_7$=\raisebox{-2.1ex}{\begin{tikzpicture}[scale= .85, transform shape]
\node (a) at (0,0) {0}; \node (b) at (.2,0) {0}; \node (c) at (.4,0) {0};
\node (d) at (.6,0) {0}; \node (e) at (.8,0) {0}; \node (f) at (1,0) {0};
\node (g) at (.6,-.4) {1};
\end{tikzpicture}}=\,$\ap_7$;
      \item[$\GR{E}{8}$] \ 
  $\beta_1$=\raisebox{-2.1ex}{\begin{tikzpicture}[scale= .85, transform shape]
\node (a) at (0,0) {2}; \node (b) at (.2,0) {3}; \node (c) at (.4,0) {4};
\node (d) at (.6,0) {5}; \node (e) at (.8,0) {6}; \node (f) at (1,0) {4}; \node (g) at (1.2,0) {2};
\node (h) at (.8,-.4) {3};
\end{tikzpicture}},
  $\beta_2$=\raisebox{-2.1ex}{\begin{tikzpicture}[scale= .85, transform shape]
\node (h) at (-.2,-.0) {0}; \node (a) at (0,0) {1}; \node (b) at (.2,0) {2}; \node (c) at (.4,0) {3};
\node (d) at (.6,0) {4}; \node (e) at (.8,0) {3}; \node (f) at (1,0) {2};
\node (g) at (.6,-.4) {2};
\end{tikzpicture}},
  $\beta_3$=\raisebox{-2.1ex}{\begin{tikzpicture}[scale= .85, transform shape]
\node (h) at (-.2,-.0) {0}; \node (a) at (0,0) {1}; \node (b) at (.2,0) {2}; \node (c) at (.4,0) {2};
\node (d) at (.6,0) {2}; \node (e) at (.8,0) {1}; \node (f) at (1,0) {0};
\node  at (.6,-.4) {1};
\end{tikzpicture}},
  $\beta_4$=\raisebox{-2.1ex}{\begin{tikzpicture}[scale= .85, transform shape]
\node (h) at (-.2,-.0) {0}; \node (a) at (0,0) {1}; \node (b) at (.2,0) {0}; \node (c) at (.4,0) {0};
\node (d) at (.6,0) {0}; \node (e) at (.8,0) {0}; \node (f) at (1,0) {0};
\node (g) at (.6,-.4) {0};
\end{tikzpicture}}=\,$\ap_2$,
  $\beta_5$=\raisebox{-2.1ex}{\begin{tikzpicture}[scale= .85, transform shape]
\node (h) at (-.2,-.0) {0}; \node (a) at (0,0) {0}; \node (b) at (.2,0) {0}; \node (c) at (.4,0) {1};
\node (d) at (.6,0) {2}; \node (e) at (.8,0) {1}; \node (f) at (1,0) {0};
\node (g) at (.6,-.4) {1};
\end{tikzpicture}},  \\
  $\beta_6$=\raisebox{-2.1ex}{\begin{tikzpicture}[scale= .85, transform shape]
\node (h) at (-.2,-.0) {0}; \node (a) at (0,0) {0}; \node (b) at (.2,0) {0}; \node (c) at (.4,0) {1};
\node (d) at (.6,0) {0}; \node (e) at (.8,0) {0}; \node (f) at (1,0) {0};
\node (g) at (.6,-.4) {0};
\end{tikzpicture}}=\,$\ap_4$, 
  $\beta_7$=\raisebox{-2.1ex}{\begin{tikzpicture}[scale= .85, transform shape]
\node (h) at (-.2,-.0) {0}; \node (a) at (0,0) {0}; \node (b) at (.2,0) {0}; \node (c) at (.4,0) {0};
\node (d) at (.6,0) {0}; \node (e) at (.8,0) {1}; \node (f) at (1,0) {0};
\node (g) at (.6,-.4) {0};
\end{tikzpicture}}=\,$\ap_6$,
  $\beta_8$=\raisebox{-2.1ex}{\begin{tikzpicture}[scale= .85, transform shape]
\node (h) at (-.2,-.0) {0}; \node (a) at (0,0) {0}; \node (b) at (.2,0) {0}; \node (c) at (.4,0) {0};
\node (d) at (.6,0) {0}; \node (e) at (.8,0) {0}; \node (f) at (1,0) {0};
\node (g) at (.6,-.4) {1};
\end{tikzpicture}}=\,$\ap_8$;
\end{description}
For the reader's convenience, we provide the Hasse diagram of the cascade posets for $\GR{D}{n}$
and $\GR{E}{n}$. The node `$i$' in the diagram represents $\beta_i\in\eus K$, and we attach the set 
$\Phi(\beta_i)\subset\Pi$ to it.

\begin{center}
%%%%%%%%%%%%%%%%   Cascade D_2n   %%%%%%%%
\begin{tikzpicture}[scale= .5] %, transform shape]
\node  (1) at (9,10) {\small $1$};
\node  (2) at (5.5,8) {\small $2$};
\node  (3) at (9,8) {\small $3$};
\node  (4) at (5.5,6) {\small $4$};
\node  (5) at (9,6) {$\dots$};
\node  (6) at (5.5,4) {$\dots$};
\node  (7) at (9,4) {\small ${2n{-}5}$};
\node  (8) at (5.5,2) {\small ${2n{-}4}$};
\node  (9) at (9,2) {\small ${2n{-}3}$}; 
\node  (10) at (5.5,0) {\small ${2n{-}2}$}; 
\node  (11) at (9.95,0) {\small ${2n{-}1}$}; 
\node  (12) at (12,0) {\small ${2n}$}; 
\foreach \from/\to in {1/2, 1/3, 3/4, 3/5, 5/7,7/8, 7/9, 9/10, 9/11,9/12} \draw [-,line width=.7pt] (\from) -- (\to);
\draw[loosely dotted] (5)--(6);

\draw (10.1,10) node {{\color{darkblue}\small   $\{\ap_2\}$}};
\draw (4.5,8) node {{\color{darkblue}\small   $\{\ap_1\}$}};
\draw (10.1,8) node {{\color{darkblue}\small   $\{\ap_4\}$}};
\draw (4.5, 6) node {{\color{darkblue}\small   $\{\ap_3\}$}};
\draw (11.4, 4) node {{\color{darkblue}\small   $\{\ap_{2n-4}\}$}};
\draw (3.1,2) node {{\color{darkblue}\small   $\{\ap_{2n-5}\}$}};
\draw (11.4,2) node {{\color{darkblue}\small   $\{\ap_{2n-2}\}$}};
\draw (3.1,0) node {{\color{darkblue}\small   $\{\ap_{2n-3}\}$}};
\draw (8,0) node {{\color{darkblue}\small   $\{\ap_{2n}\}$}};
\draw (13.7,0) node {{\color{darkblue}\small   $\{\ap_{2n-1}\}$}};

\draw (2.8,4) node {$\GR{D}{2n}$:};
\end{tikzpicture}
\quad
%%%%%%%%%%%%%%%%   Cascade D_{2n+1}   %%%%%%%%
\begin{tikzpicture}[scale= .5] %, transform shape]
\node (1) at (9,10) {\small $1$};
\node (2) at (6,8) {\small $2$};
\node (3) at (9,8) {\small $3$};
\node (4) at (6,6) {\small $4$};
\node (5) at (9,6) {$\dots$};
\node  (6) at (6,4)     {$\dots$};
\node (7) at (9,4) {\small ${2n{-}3}$};
\node (8) at (6,2) {\small ${2n{-}2}$};
\node (9) at (9,2) {\small ${2n{-}1}$}; 
\node (10) at (6,0) {\small ${2n}$}; 
\foreach \from/\to in {1/2, 1/3, 3/4, 3/5, 5/7,7/8, 7/9, 9/10} \draw [-,line width=.7pt] (\from) -- (\to);
\draw[loosely dotted] (5)--(6);

\draw (10.1,10) node {{\color{darkblue}\small   $\{\ap_2\}$}};
\draw (5,8) node {{\color{darkblue}\small   $\{\ap_1\}$}};
\draw (10.1,8) node {{\color{darkblue}\small   $\{\ap_4\}$}};
\draw (5, 6) node {{\color{darkblue}\small   $\{\ap_3\}$}};
\draw (11.3, 4) node {{\color{darkblue}\small   $\{\ap_{2n-2}\}$}};
\draw (3.7,2) node {{\color{darkblue}\small   $\{\ap_{2n-3}\}$}};
\draw (12,2) node {{\color{darkblue}\small   $\{\ap_{2n},\ap_{2n+1}\}$}};
\draw (4.2,0) node {{\color{darkblue}\small   $\{\ap_{2n-1}\}$}};

\draw (2.7,4) node {$\GR{D}{2n+1}$:};
\end{tikzpicture}
\end{center}

\vspace{1ex}
\begin{center}
%%%%%%%   Cascade E6   %%%%%%%%
\begin{tikzpicture}[scale= .5] %, transform shape]
\node (2) at (4.5,6) {$1$};
\node (3) at (4.5,4) {$2$};
\node (4) at (4.5,2) {$3$};
\node (5) at (4.5,0) {$4$};
\foreach \from/\to in {2/3, 3/4, 4/5} \draw [-,line width=.7pt] (\from) -- (\to);

\draw (5.6,6) node {{\color{darkblue}\small $\{\ap_6\}$}};
\draw (6.2,4) node {{\color{darkblue}\small  $\{\ap_1,\ap_5\}$}};
\draw (6.2,2) node {{\color{darkblue}\small  $\{\ap_2,\ap_4\}$}};
\draw (5.6,0) node {{\color{darkblue}\small  $\{\ap_3\}$}};

\draw (3.2,3) node {$\GR{E}{6}$:};
\end{tikzpicture}
\quad
%%%%%%%%   Cascade E7   %%%%%%%%
\begin{tikzpicture}[scale= .5] %, transform shape]
\node (2) at (10.5,6) {$1$};
\node (3) at (9,4) {$2$};
\node (4) at (10.5,2) {$3$};
\node (5) at (7.5,2) {$4$};
\node (6) at (6,0) {$5$};
\node (7) at (9,0) {$6$};
\node (8) at (11.5,0) {$7$}; 
\foreach \from/\to in {2/3, 3/4, 3/5, 5/6, 5/7, 5/8} \draw [-,line width=.7pt] (\from) -- (\to);

\draw (9.5,6) node {{\color{darkblue}\small  $\{\ap_6\}$}};
\draw (8, 4) node {{\color{darkblue}\small  $\{\ap_2\}$}};
\draw (11.5, 2) node {{\color{darkblue}\small  $\{\ap_1\}$}};
\draw (6.5,2) node {{\color{darkblue}\small  $\{\ap_4\}$}};
\draw (5,0) node {{\color{darkblue}\small  $\{\ap_3\}$}};
\draw (8,0) node {{\color{darkblue}\small  $\{\ap_5\}$}};
\draw (10.5,0) node {{\color{darkblue}\small  $\{\ap_7\}$}};

\draw (5,3) node {$\GR{E}{7}$:};
\end{tikzpicture}
\quad
%%%%%%%%%   Cascade E8   %%%%%%%%
\begin{tikzpicture}[scale= .5] %, transform shape]
\node (1) at (9,8) {$1$};
\node (2) at (10.5,6) {$2$};
\node (3) at (9,4) {$3$};
\node (4) at (10.5,2) {$4$};
\node (5) at (7.5,2) {$5$};
\node (6) at (6,0) {$6$};
\node (7) at (9,0) {$7$};
\node (8) at (11.5,0) {$8$}; 
\foreach \from/\to in {1/2, 2/3, 3/4, 3/5, 5/6, 5/7, 5/8} \draw [-,line width=.7pt] (\from) -- (\to);

\draw (10,8) node {{\color{darkblue}\small  $\{\ap_1\}$}};
\draw (11.5,6) node {{\color{darkblue}\small  $\{\ap_7\}$}};
\draw (10, 4) node {{\color{darkblue}\small  $\{\ap_3\}$}};
\draw (11.5, 2) node {{\color{darkblue}\small  $\{\ap_2\}$}};
\draw (8.6,2) node {{\color{darkblue}\small  $\{\ap_5\}$}};
\draw (7.1,0) node {{\color{darkblue}\small  $\{\ap_4\}$}};
\draw (10.1,0) node {{\color{darkblue}\small  $\{\ap_6\}$}};
\draw (12.5,0) node {{\color{darkblue}\small  $\{\ap_8\}$}};

\draw (6.1,3) node {$\GR{E}{8}$:};
\end{tikzpicture}
\end{center}

\end{document}